\documentclass[12pt,twoside]{amsart}
\usepackage{amssymb}
\usepackage{amscd}
\usepackage{hyperref}

\title[Minimal model program]
{Minimal model program for excellent surfaces} 
\author{Hiromu Tanaka} 
\subjclass[2010]{14E30.}
\keywords{minimal model, excellent surfaces, log canonical}
\address{Department of Mathematics, Imperial College, London, 180 Queen's Gate, 
London SW7 2AZ, UK} 
\email{h.tanaka@imperial.ac.uk}
\newcommand{\Nklt}[0]{{\operatorname{Nklt}}}

\newcommand{\red}[0]{{\operatorname{red}}}

\newcommand{\Spec}[0]{{\operatorname{Spec}}}

\newcommand{\Supp}[0]{{\operatorname{Supp}}}
\newcommand{\Pic}[0]{{\operatorname{Pic}}}

\newcommand{\Ex}[0]{{\operatorname{Ex}}}
\newtheorem{thm}{Theorem}[section]
\newtheorem{lem}[thm]{Lemma}
\newtheorem{cor}[thm]{Corollary}
\newtheorem{prop}[thm]{Proposition}

\newtheorem*{claim}{Claim}

\theoremstyle{definition}

\newtheorem{dfn}[thm]{Definition}

\newtheorem{rem}[thm]{Remark}
\newtheorem*{ack}{Acknowledgments}      
         
\newtheorem{step}[thm]{Step}

\newtheorem{assumption}[thm]{Assumption}

\makeatletter
  
  \@addtoreset{equation}{section}
  \makeatother

\newcommand{\MO}{\mathcal{O}}
\newcommand{\R}{\mathbb{R}}
\newcommand{\Q}{\mathbb{Q}}
\newcommand{\Z}{\mathbb{Z}}

\newcommand{\p}{\mathfrak{p}}

\begin{document}

\maketitle

\begin{abstract}
We establish the minimal model program  
for log canonical and Q-factorial surfaces over excellent base schemes. 
\end{abstract}

\tableofcontents

\section{Introduction}

The Italian school of algebraic geometry in the early 20th century 
established the classification theory for smooth projective surfaces over the field of complex numbers, 
which was generalised by Kodaira, Shafarevich and Bombieri--Mumford. 
In particular, Shafarevich studied 
the minimal model theory for regular surfaces 
in the classical sense (\cite{Sha66}). 
Koll\'ar and Kov\'acs proved, in their unpublished preprint \cite{KK}, 
that the minimal model program holds 
for log canonical surfaces over algebraically closed fields 
by the viewpoint of the higher dimensional minimal model theory. 
Recently, Fujino and the author established 
the minimal model program for $\Q$-factorial surfaces over algebraically closed fields (\cite{Fuj11}, \cite{Tan14}).

The purpose of this paper is 
extending the minimal model program 
for surfaces over algebraically closed fields 
to the one for surfaces over excellent base schemes. 
More precisely, we prove the following theorem. 

\begin{thm}[Theorem~\ref{t-qfac-mmp}, Theorem~\ref{t-lc-mmp}]\label{intro-MMP}
Let $B$ be a regular excellent separated 
scheme of finite dimension. 
Let $\pi:X \to S$ be a projective $B$-morphism 
from a quasi-projective intergal normal $B$-scheme $X$ of dimension two 
to a quasi-projective $B$-scheme $S$. 
Let $\Delta$ be an effective $\R$-divisor on $X$. 
Assume that one of the following properties holds. 
\begin{enumerate}
\item[(LC)] $(X, \Delta)$ is log canonical.
\item[(QF)] $X$ is $\Q$-factorial and, 
for the irreducible decomposition $\Delta=\sum_{i \in I} \delta_i D_i$, 
the inequality $0 \leq \delta_i \leq 1$ holds for any $i \in I$. 
\end{enumerate}
Then, there exist a sequence of projective birational $S$-morphisms 
$$X=:X_0 \overset{\varphi_0}\to 
X_1 \overset{\varphi_1}\to \cdots 
\overset{\varphi_{r-1}}\to X_r=:X^{\dagger}$$
and $\R$-divisors $\Delta_i$ and $\Delta^{\dagger}$ defined by 
$$\Delta=:\Delta_0,\quad (\varphi_i)_*(\Delta_i)=:\Delta_{i+1}, \quad \Delta_r=:\Delta^{\dagger}$$
which satisfy the following properties. 
\begin{enumerate}
\item Each $X_i$ is a projective intergal normal $S$-scheme of dimension two.
\item Each $(X_i, \Delta_i)$ satisfies {\em (LC)} or {\em (QF)} 
according as the above assumption.
\item $\rho(X_{i+1}/S)=\rho(X_i/S)-1$ for any $i$. 
\item $(X^{\dagger}, \Delta^{\dagger})$ satisfies one of the following conditions.
\begin{enumerate}
\item $K_{X^{\dagger}/B}+\Delta^{\dagger}$ is nef over $S$.
\item There is a projective $S$-morphism $\mu:X^{\dagger} \to Z$ 
to a projective $S$-scheme $Z$ 
such that $\mu_*\mathcal O_{X^{\dagger}}=\mathcal O_Z$, 
$\dim X^{\dagger}>\dim Z$, $-(K_{X^{\dagger}/B}+\Delta^{\dagger})$ is $\mu$-ample 
and $\rho(X^{\dagger}/S)-1=\rho(Z/S)$.
\end{enumerate}
\end{enumerate}
\end{thm}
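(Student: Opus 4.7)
The plan is to construct the sequence inductively, running one step of the minimal model program at a time and terminating via the descent of the Picard number. At stage $i$, assuming $(X_i,\Delta_i)$ satisfies (LC) or (QF), I would first test whether $K_{X_i/B}+\Delta_i$ is nef over $S$; if it is, set $r:=i$ and land in case (4a). Otherwise, I would invoke the cone theorem for projective excellent surfaces in the corresponding (LC) or (QF) setting (established earlier in the paper) to find a $(K_{X_i/B}+\Delta_i)$-negative extremal ray $R\subset \overline{NE}(X_i/S)$, and then the matching contraction theorem to produce a projective $S$-morphism $\varphi_i:X_i\to X_{i+1}$ with $(\varphi_i)_*\mathcal O_{X_i}=\mathcal O_{X_{i+1}}$ that contracts exactly the curves whose classes lie in $R$. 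If $\dim X_{i+1}<\dim X_i$, then $\varphi_i$ is a Mori fiber type contraction and we terminate in case (4b) with $\mu:=\varphi_i$; otherwise $\varphi_i$ is a divisorial contraction of a single irreducible curve to a point, the Picard number drops by one, and the iteration continues.

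At each divisorial step one then verifies that $X_{i+1}$ is a projective normal integral $S$-scheme of dimension two, that the pushforward pair $(X_{i+1},\Delta_{i+1}:=(\varphi_i)_*\Delta_i)$ still satisfies the same condition, and that $\rho(X_{i+1}/S)=\rho(X_i/S)-1$. Normality and dimension are automatic for a birational contraction of a projective normal surface with connected fibers. In the (LC) case, the standard discrepancy / negativity-of-contraction computation on the exceptional curve, which on a surface is a direct intersection calculation, shows that log canonicity is preserved under a $(K+\Delta)$-negative divisorial contraction. In the (QF) case, any prime divisor on $X_{i+1}$ has $\Q$-Cartier proper transform on the $\Q$-factorial $X_i$, and adjusting by a rational multiple of the contracted curve (chosen via the intersection pairing) produces a $\Q$-Cartier descent to $X_{i+1}$; the coefficient bound $0\le \delta_j\le 1$ is preserved by pushforward of effective divisors. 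The equality $\rho(X_{i+1}/S)=\rho(X_i/S)-1$ follows from the fact that $\varphi_i$ contracts precisely one extremal ray. Termination is then immediate, since $\rho(X_i/S)$ is a strictly decreasing sequence of positive integers.

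The main obstacle is not this formal induction but the underlying cone and contraction theorems over an excellent base, where the classical Kawamata--Viehweg vanishing is unavailable in mixed and positive characteristic and over imperfect residue fields. The strategy is to develop surface-specific replacements in the earlier sections: intersection-theoretic analysis of extremal curves, a base-point-free theorem proved via surface Zariski decomposition and direct numerical arguments in place of vanishing, and parallel treatments of the (LC) and (QF) cases (since the latter allows coefficient $1$ without full log canonicity). Once these foundational ingredients are in hand, Theorem~\ref{intro-MMP} reduces to the clean induction sketched above, with the two cases (LC) and (QF) running in parallel along identical combinatorics but invoking separate cone/contraction inputs.
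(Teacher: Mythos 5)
Your inductive framework — test nefness, find a $(K+\Delta)$-negative extremal ray, contract, and either terminate in a Mori fibre space or recurse on the divisorial contraction with Picard number dropping by one — is correct and matches the paper's reduction of Theorem~\ref{intro-MMP} to Theorem~\ref{t-cone} plus Theorem~\ref{t-contraction}. Your (QF) analysis is also sound: $\Q$-factoriality descends along a divisorial contraction by the intersection-pairing adjustment you describe, which is exactly Theorem~\ref{t-contraction}(5).

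The genuine gap is in the (LC) branch. You assert that ``the standard discrepancy / negativity-of-contraction computation on the exceptional curve \dots shows that log canonicity is preserved under a $(K+\Delta)$-negative divisorial contraction.'' But that computation presupposes that $K_{X_{i+1}/B}+\Delta_{i+1}$ is $\R$-Cartier, and this is precisely what fails to be automatic when $X_{i+1}$ is not $\Q$-factorial. After a divisorial contraction of a log canonical (but not $\Q$-factorial) surface pair, the image is a priori only \emph{numerically} log canonical in the sense of Definition~\ref{d-nlc}; one still has to show that the log canonical divisor descends as an $\R$-Cartier divisor in order for the cone and contraction theorems to apply at the next step. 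Over an algebraically closed field this is \cite[Proposition 4.11]{KM98} and rests on the classification of log canonical surface singularities, which is unavailable over a general excellent base. The paper therefore devotes the second half of Section~4 to proving Theorem~\ref{t-numerical-lc} via dlt blow-ups (Theorem~\ref{t-dlt-blowup}), the germ analysis of Lemma~\ref{lcgermsing}, and the semi-log-canonical curve computation of Lemma~\ref{l-slc}. Without an argument of this kind, your induction cannot even state its next step in the (LC) case.

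A secondary mismatch: you propose replacing Kawamata--Viehweg vanishing by ``surface Zariski decomposition and direct numerical arguments.'' The paper does not avoid vanishing — it \emph{establishes} the needed Kawamata--Viehweg type statements (Theorems~\ref{t-rel-klt-kvv}, \ref{t-glob-klt-kvv} and their Nadel variants) through intersection calculations on exceptional fibres in the relative case and, in positive characteristic, through $F$-singularity techniques and a Fujita-type vanishing. The base point free theorem is then proved by the classical X-method with these vanishing theorems as inputs. Your alternative route is not fleshed out enough to tell whether it would survive the imperfect residue field and mixed characteristic obstructions that motivated the paper's approach.
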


If $({\rm a})$ holds, then $(X^{\dagger}, \Delta^{\dagger})$ is called 
a log minimal model over $S$. 
If $({\rm b})$ holds, then $\mu:X^{\dagger} \to Z$ is called 
a $(K_{X^{\dagger}/B}+\Delta^{\dagger})$-Mori fibre space over $S$.

Many results in minimal model theory 
depend on the lower dimensional facts, 
hence Theorem~\ref{intro-MMP} might be applied to establish 
the three dimensional minimal model program over 
$\Spec\,\Z$ or imperfect fields.

Also, it is useful to treat varieties over excellent base schemes 
even when one studies the minimal model theory 
over an algebraically closed field of positive characteristic. 
For instance, a known proof for the fact that $3$-dimensional terminal singularities are isolated 
depends on the result that terminal excellent surfaces are regular 
(cf. \cite[Corollary 2.30]{Kol13}). 

As applications of Theorem~\ref{intro-MMP}, we also prove inversion of adjunction (Theorem~\ref{t-IOA}) and 
the Koll\'ar--Shokurov connectedness theorem (Theorem~\ref{t-connected}) 
for surfaces over excellent schemes. 
The former is a direct consequence of the minimal model program, 
whilst the latter needs some arguments.

\begin{rem}
It is worth explaining why 
the schemes that appear in Theorem~\ref{intro-MMP} 
are assumed to be not only Noetherian but also excellent. 
We will frequently use the desingularisation theorem 
by Lipman which holds for quasi-excellent surfaces \cite{Lip78}. 
Furthermore, in order to define canonical divisors, 
it seems to be necessary to assume that the base scheme $B$ has a dualising complex. 
In particular, $B$ needs to be universally catenary (cf. \cite[Ch. III, Section 10]{Har66}). 
Moreover, following \cite{Kol13}, we restrict ourselves to treating only regular base schemes. 
\end{rem}

\subsection{Proof of Theorem~\ref{intro-MMP}}\label{ss-sketch}

Let us overview some of the ideas of the proof of Theorem~\ref{intro-MMP}. 
The main strategy is to imitate the proof in characteristic zero. 
To this end, we establish appropriate vanishing theorems. 
After that, we prove the base point free theorem 
by applying the standard argument called the X-method. 
Some of the vanishing theorems we will establish are as follows. 

\begin{thm}[Theorem~\ref{t-rel-klt-kvv}]\label{intro-rel-klt-kvv}
Let $B$ be a regular excellent separated 
scheme of finite dimension. 
Let $\pi:X \to S$ be a projective $B$-morphism 
from a two-dimensional quasi-projective klt pair $(X, \Delta)$ over $B$ 
to a quasi-projective $B$-scheme $S$. 
Let $D$ be a $\Q$-Cartier $\Z$-divisor on $X$ 
such that $D-(K_{X/B}+\Delta)$ is $\pi$-ample. 
If $\dim \pi(X) \geq 1$, then the equation $R^i\pi_*\MO_X(D)=0$ holds for every $i>0$. 
\end{thm}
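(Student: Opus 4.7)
My plan is to reduce the statement to a Kawamata--Viehweg-type vanishing on a \emph{regular} surface via Lipman's desingularisation, and then to descend that vanishing through a Leray spectral sequence argument. Since the assertion is local on $S$, I may assume $S$ is affine. Because $\dim X=2$ and $\dim\pi(X)\geq 1$, every fibre of $\pi$ has dimension at most one, and so $R^i\pi_*\MO_X(D)=0$ automatically for $i\geq 2$; the only case requiring work is $i=1$.

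Let $f\colon Y\to X$ be a log resolution of $(X,\Delta)$, whose existence is guaranteed by \cite{Lip78}, and write $K_{Y/B}+\Delta_Y=f^*(K_{X/B}+\Delta)$. The klt hypothesis forces every coefficient of $\Delta_Y$ to be strictly less than $1$, so one can decompose $\Delta_Y=\Gamma_Y-E$ with $\Gamma_Y$ an effective snc $\mathbb{Q}$-divisor satisfying $\lfloor\Gamma_Y\rfloor=0$ and $E$ an effective, $f$-exceptional $\mathbb{Q}$-divisor sharing no component with $\Gamma_Y$. A standard integer-round-up construction then produces a $\mathbb{Z}$-divisor $D_Y$ on $Y$ for which $D_Y-(K_{Y/B}+\Gamma_Y)$ is $(\pi\circ f)$-nef and $(\pi\circ f)$-big, while simultaneously $f_*\MO_Y(D_Y)=\MO_X(D)$ and $R^1f_*\MO_Y(D_Y)=0$. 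The vanishing of $R^1f_*$ here is a relative statement for a proper birational morphism between normal excellent surfaces with a nef-and-big $\mathbb{Q}$-Cartier twist, and should be established earlier in the paper via the negativity lemma on the $f$-exceptional locus.

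With $(Y,\Gamma_Y)$ a regular snc pair and $\pi\circ f\colon Y\to S$ a projective morphism whose image still has positive dimension, I can invoke the Kawamata--Viehweg vanishing on the regular model---again established earlier in the paper---to conclude $R^1(\pi\circ f)_*\MO_Y(D_Y)=0$. The Leray spectral sequence for $\pi\circ f$, combined with $Rf_*\MO_Y(D_Y)\simeq\MO_X(D)$ from the previous step, immediately gives $R^1\pi_*\MO_X(D)=0$, finishing the proof.

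The main obstacle is the Kawamata--Viehweg vanishing on the regular surface $Y$ over $S$: this is the input where the positive- and mixed-characteristic difficulties are concentrated, and it should be the substance of an earlier section of the paper. The remaining tasks---choosing the round-up $D_Y$, verifying $\lfloor\Gamma_Y\rfloor=0$ from the klt assumption, and the birational relative vanishing $R^1f_*\MO_Y(D_Y)=0$---amount to routine bookkeeping once the klt condition is used to guarantee that all negative exceptional contributions can be absorbed into integer adjustments without destroying the nef-and-big property of $D_Y-(K_{Y/B}+\Gamma_Y)$.
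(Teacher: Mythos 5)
Your proposal matches the paper's proof: the paper reduces Theorem~\ref{t-rel-klt-kvv} to the regular case (Proposition~\ref{p-rel-reg-kvv}) by running the standard log-resolution plus Leray spectral sequence argument of \cite[Theorem 1-2-5]{KMM87}, exactly as you outline. The one small caveat, which you correctly flag as bookkeeping, is that the boundary on $Y$ should be taken to be the fractional round-up of $f^*(D-K_{X/B}-\Delta)$ rather than literally the positive part of $\Delta_Y$, so that the round-up of $D_Y$ is absorbed into the boundary and $D_Y-(K_{Y/B}+\Gamma_Y)$ stays equal to the nef and big pullback rather than differing from it by an effective exceptional divisor.
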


\begin{thm}[Theorem~\ref{t-glob-klt-kvv}]\label{intro-glob-klt-kvv}
Let $k$ be a field of characteristic $p>0$. 
Let $(X, \Delta)$ be a two-dimensional projective klt pair over $k$. 
Let $D$ be a $\Q$-Cartier $\Z$-divisor on $X$ 
such that $D-(K_X+\Delta)$ is ample. 
Let $N$ be a nef Cartier divisor on $X$ with $N\not\equiv 0$. 
Then there exists $m_0\in\mathbb Z_{>0}$ such that 
$$H^{i}(X, D+mN)=0$$
for any integers $i$ and $m$ satisfying $i>0$ and $m \geq m_0$. 
\end{thm}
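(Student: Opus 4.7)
The plan is to prove the vanishings of $H^{2}$ and $H^{1}$ separately, reducing the harder $H^{1}$ step to the relative vanishing theorem~\ref{intro-rel-klt-kvv}.

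For $H^{2}(X, D+mN)=0$, I would invoke Serre duality on the Cohen--Macaulay normal surface $X$: the group is dual to $H^{0}(X,\omega_{X}\otimes \MO_{X}(-D-mN))$, so it suffices to show that the Weil divisor class $K_{X}-D-mN$ is not effective for $m\gg 0$. The hypothesis that $N$ is nef with $N\not\equiv 0$ gives $N\cdot H>0$ for every ample $H$: if $N\cdot H=0$, then Hodge index forces $N^{2}\leq 0$, combined with nefness this gives $N^{2}=0$, and a second application of Hodge index forces $N\equiv 0$, contradicting the hypothesis. Hence $(K_{X}-D-mN)\cdot H\to -\infty$ as $m\to\infty$, which rules out effectivity and kills the $H^{0}$.

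For $H^{1}(X, D+mN)=0$ I would case-split on the numerical dimension of $N$. When $N^{2}>0$, so $N$ is big and nef, Kodaira's lemma furnishes an integer $a>0$, an ample Cartier divisor $A$, and an effective $E$ with $aN\sim A+E$, where $A$ can be chosen ample enough that $D+A$ is ample. Then for $m\geq a$ the divisor $M_{m}:=(D+A)+(m-a)N$ is ample (ample plus nef), and the exact sequence
\begin{equation*}
0 \to \MO_{X}(M_{m}) \to \MO_{X}(D+mN) \to \MO_{E}(D+mN) \to 0
\end{equation*}
together with Serre vanishing $H^{i}(X,M_{m})=0$ for $i>0$ and $m\gg 0$ yields an isomorphism $H^{1}(X,D+mN)\cong H^{1}(E,(D+mN)|_{E})$. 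On irreducible components $C$ of $E$ with $N\cdot C>0$ the restricted degree grows with $m$ and Riemann--Roch on curves does the job. When $N^{2}=0$, $N$ has numerical dimension one and the strategy is to produce a projective morphism $f\colon X\to Z$ with $\dim Z=1$ from a multiple of $N$ (either directly when semi-ampleness of some $bN$ is available, or after a birational modification producing a fibration onto a curve); since $D-(K_{X}+\Delta)$ is then $f$-ample, Theorem~\ref{intro-rel-klt-kvv} gives $R^{i}f_{*}\MO_{X}(D)=0$ for $i>0$, and the projection formula combined with Serre vanishing on the curve $Z$ completes the argument.

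The main obstacle is twofold. In the big case, the components $C$ of $E$ with $N\cdot C=0$ are genuinely unavoidable---for any Kodaira decomposition $aN\sim A+E$ and any $C$ in the null locus $\{C: N\cdot C=0\}$ one computes $E\cdot C=-A\cdot C<0$, forcing $C$ into the support of $E$---and on such components $(D+mN)|_{C}=D|_{C}$ does not grow with $m$, so one needs an iterative refinement that adjusts $D$ by a divisor supported on the null locus and reruns the exact sequence until all components behave positively. In the non-big case, semi-ampleness of $N$ is not automatic in positive characteristic, since classical Kawamata--Viehweg vanishing fails and the standard base-point-free theorems do not apply directly to $N$; this step typically requires either passing to $\overline{\mathbb{F}_{p}}$ (and using that the vanishing of cohomology descends from the geometric fibre) to invoke Keel's semi-ampleness theorem, or exploiting additional structural input on surfaces developed in the earlier sections of the paper.
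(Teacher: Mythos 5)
Your approach diverges substantially from the paper's, and the two points you flag as ``obstacles'' are genuine gaps that your proposal does not close. The paper deliberately avoids the route you sketch: it neither writes a Kodaira decomposition $aN\sim A+E$ nor tries to extract a fibration from a multiple of $N$, and it does \emph{not} invoke Keel's semi-ampleness theorem (the introduction emphasises that the argument, unlike that of Birkar--Chen--Zhang, is independent of \cite{Kee99}). The engine is Proposition~\ref{p-F-vanishing}: reducing first to the simple normal crossing case by a log resolution and then to an $F$-finite subfield, one lands on a strongly $F$-regular pair and exploits the Frobenius splitting sequence $0\to B_e\to F^e_*\MO_X(-(p^e-1)(K_X+\Delta))\to\MO_X\to 0$ together with the Fujita vanishing theorem \cite[Theorem 1.5]{Kee03} to kill $H^{\dim X-1}$, while $H^{\dim X}$ is handled by Lemma~\ref{l-top} via repeated hyperplane sections and Fujita vanishing. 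No semi-ampleness or fibration structure on $N$ enters at any point.

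As for your gaps concretely: in the big case the components $C$ of $E$ with $N\cdot C=0$ are, as you observe, unavoidable and carry a restricted divisor whose degree is stationary in $m$; the ``iterative refinement'' you gesture at is the real content of the theorem and is not carried out in your proposal. In the non-big case your plan cannot be rescued by Keel's criterion: when $N^2=0$ the surface $X$ itself lies in the exceptional locus $\mathbb{E}(N)$, so the reduction that criterion offers is vacuous, and a fibration onto a curve need not exist at all. Your $H^2$ step via Serre duality and Hodge index is sound and is a legitimate alternative to the paper's hyperplane-section reduction, but the $H^1$ part of the argument remains open in both of your cases.
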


Let us overview how to show the above vanishing theorems.  
Theorem~\ref{intro-rel-klt-kvv} is well-known for the case 
where $\dim \pi(X)=2$ (cf. \cite[Theorem 10.4]{Kol13}). 
One of the essential properties in this case is that 
the intersection matrix of the exceptional locus is negative definite. 
For the remaining case i.e. $\dim \pi(X)=1$, 
the intersection matrix of any fibre of $\pi$ is not negative definite 
but semi-negative definite. 
Hence we can apply an almost parallel argument to 
the one of \cite[Theorem 10.4]{Kol13} 
after replacing some strict inequality signs \lq\lq $>$" by \lq\lq $\geq$".

Theorem~\ref{intro-glob-klt-kvv} is known 
if $k$ is algebraically closed (\cite[Theorem 2.11]{Tan15}). 
The proof of \cite[Theorem 2.11]{Tan15} depends on 
Kawamata's covering trick, which unfortunately seems to need the assumption 
that $k$ is algebraically closed (cf. \cite[Lemma 1-1-2]{KMM87}). 
In order to overcome this technical difficulty, 
we make use of the notion of $F$-singularities. 
Actually, we first establish a similar vanishing theorem 
for strongly $F$-regular surfaces (Proposition~\ref{p-F-vanishing}) 
and second extend it to the klt case via log resolutions.

\subsection{Related results}\label{ss-related}

On the classification theory for smooth surfaces over algebraically closed fields, 
we refer to \cite{Bea83}, \cite{Bad01}. 
Theories of log surfaces and normal surfaces have been developed by Iitaka, Kawamata, Miyanishi, Sakai and many others 
(cf. \cite{Miy81}, \cite{Sak87}).

It is worth mentioning on the fact that 
Theorem~\ref{intro-MMP} is known for some special cases.  
\begin{enumerate}
\item[(i)] $B$ is an algebraically closed field. 
\item[(ii)] $X$ is regular and $\Delta=0$. 
\end{enumerate}

For the former case (i), 
Theorem~\ref{intro-MMP} is a known result (cf. \cite{Fuj11}, \cite{FT12}, \cite{Tan14}). 
Concerning the latter case (ii), 
let us review that Theorem~\ref{intro-MMP} follows from a combination of known results. 
Thanks to an analogue of Castelnuovo's contractiblity criterion 
(cf. \cite[Theorem 10.5]{Kol13}), 
we can repeatedly contract a $(-1)$-curve 
i.e. the curve $E$ appearing in \cite[Theorem 10.5]{Kol13}, 
and this process will terminate. 
Since we can make use of the cone theorem (cf. Subsection~\ref{ss-cone}), 
we deduce from the Riemann--Roch theorem that 
the program ends with either a minimal model or a Mori fibre space. 

For the general case, we would like a replacement of Castelnuovo's contractiblity criterion, 
which is nothing but the base point free theorem 
established by the vanishing theorems mentioned in Subsection~\ref{ss-sketch}.

\begin{rem}
Theorem~\ref{intro-MMP} was proved 
under the assumption that $(X, \Delta)$ is a log canonical surface 
over a field $k$ of characteristic $p>0$ by the author in an earlier draft. 
In \cite{BCZ15}, 
Birkar--Chen--Zhang independently obtained the same result 
under the assumption that $(X, \Delta)$ is a klt surface over $k$. 
Both the proofs depend on Keel's result \cite{Kee99} which differ from ours. 
\end{rem}

\begin{ack}
Part of this work was done whilst 
the author visited National Taiwan University in December 2013 
with the support of the National Center for Theoretical Sciences. 
He would like to thank Professor Jungkai Alfred Chen 
for his generous hospitality. 
The author would like to thank Professors 
Caucher Birkar, 
Paolo~Cascini, 
Yoshinori~Gongyo, 
J\'anos~Koll\'ar, 
Mircea~Musta\c{t}\u{a}, 
Yusuke Nakamura, 
Yuji Odaka, 
Shunsuke Takagi, 
Chenyang~Xu 
for very useful comments and discussions. 
The author would like to thank the referee for reading 
manuscript carefully and for suggesting several improvements.  
This work was supported by JSPS KAKENHI (no. 24224001) 
and EPSRC. 
\end{ack}

\section{Preliminaries}

\subsection{Notation}\label{ss-notation}

In this subsection, we summarise notation used in this paper. 

\begin{itemize}
\item We will freely use the notation and terminology in \cite{Har77} 
and \cite{Kol13}. 
\item 
A ring $A$ is {\em quasi-excellent} if 
$A$ is a Noetherian G-ring such that for any finitely generated $A$-algebra $B$, 
its regular locus 
$$\{\p \in \Spec\,B\,|\,B_{\p}\text{ is a regular local ring}\}$$
is an open subset of $\Spec\,B$ (cf. \cite[\S 32]{Mat89}). 
A ring $A$ is {\em excellent} if $A$ is quasi-excellent and universally catenary. 
\item We say a scheme $X$ is {\em regular} (resp. {\em Gorenstein}) if 
the local ring $\MO_{X, x}$ at any point $x \in X$ is regular (resp. Gorenstein). 
We say a scheme $X$ is {\em excellent} 
if $X$ is quasi-compact and 
any point $x \in X$ admits an affine open neighbourhood $U \simeq \Spec\,A$ 
for some excellent ring $A$. 
\item For a scheme $X$, its {\em reduced structure} $X_{\red}$ 
is the reduced closed subscheme of $X$ such that the induced morphism 
$X_{\red} \to X$ is surjective. 
\item For an integral scheme $X$, 
we define the {\em function field} $K(X)$ of $X$ 
to be $\MO_{X, \xi}$ for the generic point $\xi$ of $X$. 
\item For a scheme $S$, 
we say $X$ is a {\em variety over} $S$ or an $S$-{\em variety} if 
$X$ is an integral scheme that is separated and of finite type over $S$. 
We say $X$ is a {\em curve} over $S$ or an $S$-{\em curve} 
(resp. a {\em surface} over $S$ or an $S$-{\em surface}) 
if $X$ is an $S$-variety of dimension one (resp. two). 
\item 
Let $\Delta$ be an $\R$-divisor on a Noetherian normal scheme 
and let $\Delta=\sum_{i \in I}\delta_i\Delta_i$ be the irreducible decomposition. 
Take a real number $a$. 
We write $\Delta \leq a$ 
if $\delta_i \leq a$ for any $i \in I$. 
We set $\Delta^{\leq a}:=\sum_{i \in I, \delta_i\leq a}\delta_i\Delta_i$. 
We define $\Delta \geq a$ and $\Delta^{\geq a}$ etc, in the same way. 
\item For a field $k$, let $\overline k$ be an algebraic closure of $k$. 
If $k$ is of characteristic $p>0$, 
then we set $k^{1/p^{\infty}}:=\bigcup_{e=0}^{\infty} k^{1/p^e}
=\bigcup_{e=0}^{\infty} \{x \in \overline k\,|\, x^{p^e} \in k\}$. 
\item We say a field $k$ of positive characteristic 
is $F$-{\em finite} if $[k:k^p]<\infty$. 
\item Let $f:X \to Y$ be a projective morphism of 
Noetherian schemes, 
where $X$ is an integral normal scheme. 
For a Cartier divisor $L$ on $X$, 
we say $L$ is $f$-{\em free} 
if the induced homomorphism $f^*f_*\MO_X(L) \to \MO_X(L)$ is surjective. 
Let $M$ be an $\R$-Cartier $\R$-divisor on $X$. 
We say $M$ is $f$-{\em ample} (resp. $f$-semi-ample) 
if we can write $M=\sum_{i=1}^r a_iM_i$ 
for some $r \geq 1$, positive real numbers $a_i$ and 
$f$-ample (resp. $f$-free) Cartier divisors $M_i$. 
We say $M$ is $f$-{\em big} if we can write $M=A+E$ 
for some $f$-ample $\R$-Cartier $\R$-divisor $A$ and effective $\R$-divisor $E$. 
We can define $f$-{\em nef} $\R$-divisors in the same way as in \cite[Definition 1.4]{Kol13}. 
We say $M$ is $f$-{\em numerically-trivial}, denoted by $M \equiv_f 0$, 
if $M$ and $-M$ are $f$-nef. 
\end{itemize}

\subsubsection{Excellent schemes}\label{ss-excellent}

If $B$ is an excellent scheme, then any scheme $X$ that is of finite type over $B$ 
is again excellent (cf. \cite[\S 32]{Mat89}). 
We can freely use resolution of singularities for excellent surfaces 
(cf. \cite{Lip78}, Subsection \ref{ss-log-resolution}). 
Also excellent schemes satisfy reasonable dimension formulas (cf. \cite[Section 8.2]{Liu02}).

\subsubsection{Base schemes}

We basically work over a base scheme $B$ satisfying the following properties. 

\begin{assumption}\label{a-base}
$B$ is a scheme which is excellent, regular, separated, and of finite dimension. 
\end{assumption}

\subsubsection{Quasi-projective morphisms}\label{ss-proj-morph}

Let $f:X \to S$ be a morphism of Noetherian schemes. 
We say $f$ is {\em projective} (resp. {\em quasi-projective}) 
if there exists a closed immersion (resp. an immersion) 
$X \hookrightarrow \mathbb P^n_S$ over $S$ for some non-negative integer $n$. 

\begin{rem}\label{r-projective}
We adopt the definition of projective morphisms by \cite{Har77}. 
If there is an ample invertible sheaf on $S$ 
in the sense of \cite[page 153]{Har77}, 
then our definition coincides with the one of Grothendieck 
(cf. \cite[Section 5.5.1]{FGAex}).  
\end{rem}

\subsubsection{Perturbation of nef and big divisors}

We here give two remarks on fundamental properties of divisors, which is well-known for varieties over fields. 

\begin{rem}\label{r-fibrewise-ample}
Let $f:X \to Y$ be a projective morphism of Noetherian schemes, 
where $X$ is an integral normal scheme. 
Let $L$ be an $\R$-Cartier $\R$-divisor. 
Then the following are equivalent. 
\begin{enumerate}
\item 
$L$ is $f$-ample. 
\item 
For any closed point $y \in Y$ and any integral closed subscheme $V$ of $X_y$, 
it holds that $V \cdot L^{\dim V}>0$. 
\end{enumerate}
Indeed, clearly (1) implies (2), hence it suffices to prove the opposite implication. 
If $L$ is a $\Q$-Cartier $\Q$-divisor, 
then it is well-known that (2) implies (1) (cf. \cite[Proposition 1.41]{KM98}). 
Let us consider the general case. 
Assume that $L$ satisfies (2). 
We can write $L=\sum_{i=1}^n r_iL_i$ 
for real numbers $r_1, \cdots, r_n$ and Cartier divisors $L_1, \cdots, L_n$. 
Thanks to the Nakai's criterion for projective schemes over fields (cf. \cite[Theorem 2.3.18]{Laz04a}) 
and openness of amplitude  (cf. \cite[Proposition 1.41]{KM98}), 
if $q_i$ is a rational number sufficiently close to $r_i$ for any $i \in\{1, \cdots, n\}$, 
then also $L':=\sum_{i=1}^n q_iL_i$ satisfies (2). 
Thus, we can write $L=\sum_{j=1}^m s_j L'_j$ 
such that $s_j \in \R_{>0}$, $\sum_{j=1}^m s_j=1$ and $L'_j$ is an ample $\Q$-Cartier $\Q$-divisor. Hence, $L$ satisfies (1). 
\end{rem}

\begin{rem}\label{r-Kodaira-lemma}
Let $f:X \to Y$ be a projective morphism of Noetherian schemes, 
where $X$ is an integral normal scheme. 
Let $M$ be an $f$-nef and $f$-big $\R$-Cartier $\R$-divisor on $X$. 
Since $M$ is $f$-big, we can write $M=A+E$ 
for some $f$-ample $\R$-divisor $A$ and effective $\R$-divisor $E$. 
For any $\epsilon \in \R$ such that $0<\epsilon \leq 1$, we obtain 
$$M=A+E=(1-\epsilon)M+\epsilon(A+E)=A_{\epsilon}+\epsilon E,$$
where $A_{\epsilon}:=(1-\epsilon)M+\epsilon A$. 
Since $M$ is $f$-nef and $A$ is $f$-ample, 
Remark \ref{r-fibrewise-ample} implies that $A_{\epsilon}$ is $f$-ample. 
Therefore, $M-\epsilon E$ is $f$-ample for any real number $\epsilon$ such that 
$0<\epsilon \leq 1$. 
\end{rem}

\subsubsection{Canonical sheaves}\label{ss-cano-div}

Let $B$ be a scheme satisfying Assumption~\ref{a-base}. 
Let $X$ be a quasi-projective normal $B$-variety, equipped with the structure morphism $\alpha:X \to B$. 
For the definition of the {\em canonical sheaf} $\omega_{X/B}$, 
we refer to \cite[Definition 1.6]{Kol13}. 
Note that $\omega_{X/B}$ coincides 
with $\mathcal H^r(\alpha^!\MO_B)$, 
where $\alpha^!$ is defined as in \cite[Ch. III, Theorem 8.7]{Har66} 
and 
$r$ is the integer 
such that $\mathcal H^r(\alpha^!\MO_B)|_{X^{\text{reg}}} \neq 0$ for 
the regular locus $X^{\text{reg}}$ of $X$. 
There is a $\Z$-divisor $K_{X/B}$, called the {\em canonical divisor}, such that $\omega_{X/B} \simeq \MO_X(K_{X/B})$. 
Note that $K_{X/B}$ is uniquely determined up to linear equivalences. 

\begin{lem}\label{l-bc}
Let $\alpha:X \xrightarrow{\beta} B' \xrightarrow{\theta} B$ 
be quasi-projective morphisms of Noetherian schemes such that 
\begin{enumerate}
\item $B$ and $B'$ satisfy Assumption~\ref{a-base}, and 
\item $X$ and $B'$ are normal $B$-varieties. 
\end{enumerate}
Then $\omega_{B'/B}$ is an invertible sheaf and 
there is an isomorphism of $\MO_X$-modules: 
$\omega_{X/B} \simeq \omega_{X/B'} \otimes_{\MO_X} \beta^*\omega_{B'/B}$. 
\end{lem}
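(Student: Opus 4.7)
The plan is to exploit the twisted inverse image formalism $f^!$ of \cite[Ch.~III]{Har66}: both assertions reduce to a local computation of $\theta^!\MO_B$, combined with the composition isomorphism $\alpha^! \simeq \beta^! \circ \theta^!$ and a projection formula. The canonical sheaves $\omega_{X/B}$, $\omega_{X/B'}$, $\omega_{B'/B}$ are by definition the appropriate cohomology sheaves of $\alpha^!\MO_B$, $\beta^!\MO_{B'}$, $\theta^!\MO_B$ respectively, so once $\theta^!\MO_B$ has been identified as a shifted invertible sheaf the rest is bookkeeping.

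For (1), I would work locally on $B'$ and $B$. After shrinking, $\theta$ factors as a closed immersion $i:B' \hookrightarrow \mathbb{A}^N_B$ followed by the structure morphism $p:\mathbb{A}^N_B \to B$. Since $B$ is regular, so is $\mathbb{A}^N_B$; and since $B'$ is regular, $i$ is a regular immersion, because any ideal of a regular local ring whose quotient is regular is generated by part of a regular system of parameters. Writing $c$ for the codimension of $i$ and using the standard formulas
$$p^!\MO_B \simeq \MO_{\mathbb{A}^N_B}[N], \qquad i^!(-) \simeq \det(\mathcal{N}_{B'/\mathbb{A}^N_B})\otimes i^*(-)\,[-c],$$
the composition $\theta^!\MO_B \simeq i^! p^!\MO_B$ becomes a single invertible sheaf sitting in cohomological degree $-(N-c)$. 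Since $N-c = \dim(B'/B)$ is well-defined for excellent schemes, this proves that $\omega_{B'/B}$ is invertible and yields a global identification
$$\theta^!\MO_B \simeq \omega_{B'/B}[d], \qquad d:=\dim(B'/B).$$

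For (2), I apply $\beta^!$ to the displayed isomorphism. By the composition formula $\alpha^!\MO_B \simeq \beta^!(\theta^!\MO_B)$ and the projection formula $\beta^!(\MO_{B'}\otimes^L\mathcal{L}) \simeq \beta^!(\MO_{B'})\otimes^L L\beta^*\mathcal{L}$ applied to the perfect complex $\mathcal{L}=\omega_{B'/B}[d]$ (which, being a shifted line bundle, satisfies $L\beta^*\mathcal{L}=\beta^*\omega_{B'/B}[d]$), one obtains
$$\alpha^!\MO_B \simeq \beta^!(\MO_{B'})\otimes^{L}_{\MO_X}\beta^*\omega_{B'/B}\,[d].$$
Tensoring with an invertible sheaf and shifting merely reindex cohomology sheaves, so extracting the cohomology sheaf in degree $-\dim(X/B) = -\dim(X/B')-d$ on both sides gives
$$\omega_{X/B} \simeq \omega_{X/B'}\otimes_{\MO_X}\beta^*\omega_{B'/B},$$
as required.

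The main obstacle I anticipate is tracking the conventions for $f^!$ carefully enough that the cohomology sheaves land in matching degrees: the composition isomorphism $\alpha^!\simeq \beta^!\circ\theta^!$ and the projection formula against a line bundle are standard consequences of \cite[Ch.~III]{Har66}, but they are sensitive to sign and shift conventions, and one must also invoke that $B$ being excellent makes the relative dimensions $\dim(X/B)$, $\dim(X/B')$, $\dim(B'/B)$ additive so that the degree matching in the last step is unambiguous.
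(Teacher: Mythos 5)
Your proposal is correct and follows essentially the same route as the paper: the composition isomorphism $\alpha^!\simeq\beta^!\theta^!$, the identification $\theta^!\MO_B\simeq\omega_{B'/B}[d]$ with $\omega_{B'/B}$ invertible, the projection formula against a shifted line bundle, and then extracting the relevant cohomology sheaf. The only difference is cosmetic: where you verify invertibility of $\omega_{B'/B}$ by factoring $\theta$ locally through a regular immersion into affine space, the paper simply cites the fact that $B'$ regular implies Gorenstein implies $\omega_{B'/B}$ invertible (via \cite[Ch.~V]{Har66}); your computation is just an unwinding of that citation.
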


\begin{proof}
Since $B'$ is regular and in particular Gorenstein, 
it follows that $\omega_{B'/B}$ is an invertible sheaf. 
We have that 
$$\alpha^!\MO_B \simeq \beta^!\theta^!\MO_B 
\simeq \beta^!(\omega_{B'/B}[r]) \simeq 
\beta^!\MO_{B'} \otimes \beta^*\omega_{B'/B}[r]$$
for some integer $r$, 
where the first isomorphism holds by \cite[Ch. III, 2) of Proposition 8.7]{Har66}, 
the second one follows from \cite[Ch. V, Example 2.2 and Theorem 3.1]{Har66}
and the third one holds by  
\cite[Ch. III, 6) of Proposition 8.8]{Har66}. 
Taking the $s$-th cohomology sheaf $\mathcal H^s$ of the both hand sides for an appropriate integer $s$, 
we have that 
$$\omega_{X/B} \simeq \omega_{X/B'} \otimes \beta^*\omega_{B'/B},$$
as desired.  
\end{proof}

\subsubsection{Singularities of minimal model program}\label{ss-mmp-sing}

Let $B$ be a scheme satisfying Assumption~\ref{a-base}. 
Let $X$ be a quasi-projective normal $B$-variety. 
We say $(X, \Delta)$ is a {\em log pair} over $B$ 
if $\Delta$ is an effective $\R$-divisor on $X$ such that 
$K_{X/B}+\Delta$ is $\R$-Cartier. 
Although the book \cite[Definition 2.8]{Kol13} gives the definition 
of terminal, canonical, klt, plt, dlt, log canonical singularities 
only for the case where $\Delta$ is a $\Q$-divisor, 
we can extend them for the case where $\Delta$ is an $\R$-divisor in the same way. 
Note that we always assume that $\Delta$ is effective 
although \cite[Definition 2.8]{Kol13} does not impose this assumption.

\subsubsection{Log resolutions}\label{ss-log-resolution}

Let $B$ be a scheme satisfying Assumption~\ref{a-base}. 
Let $X$ be a quasi-projective surface over $B$. 
For a closed subset $Z$ of $X$, 
we say $f:Y \to X$ is a {\em log resolution} of $(X, Z)$ 
if $f$ is a projective birational $B$-morphism from a quasi-projective $B$-surface 
such that $f^{-1}(Z) \cup \Ex(f)$ is a simple normal crossing divisor, 
where we refer to \cite[Definition 1.7]{Kol13} 
for the definition of simple normal crossing divisors. 
By \cite{Lip78}, 
there exists a log resolution of $(X, Z)$ 
for any quasi-projective $B$-surface $X$ and any closed subset $Z$ of $X$. 
For an $\R$-divisor $\Delta$, a log resolution of $(X, \Delta)$ means 
a log resolution of $(X, \Supp \Delta)$.

\subsubsection{Multiplier ideals}\label{ss-multiplier}

Let $B$ be a scheme satisfying Assumption~\ref{a-base}. 
Let $X$ be a quasi-projective normal surface over $B$. 
Let $\Delta$ be an $\R$-divisor such that $K_{X/B}+\Delta$ is $\R$-Cartier. 
For a log resolution $\mu:V \to X$ of $(X, \Delta)$, 
we define the {\em multiplier ideal} of $(X, \Delta)$ by 
$$\mathcal J_{\Delta}:=\mu_*\MO_V(\ulcorner K_{V/B}-\mu^*(K_{X/B}+\Delta)\urcorner).$$
By the same argument as in \cite[Theorem~9.2.18]{Laz04}, 
we can show that the coherent sheaf $\mathcal J_{\Delta}$ 
does not depend on the choice of log resolutions. 
If $\Delta$ is effective, then it follows that $\mathcal J_{\Delta} \subset \MO_X$. 

\subsection{Intersection numbers and Riemann--Roch theorem}

We recall the definition of intersection numbers (cf. \cite[Ch I, Sections~1 and 2]{Kle66}).

\begin{dfn}\label{def-intersection}
\begin{enumerate}
\item{Let $C$ be a projective curve over a field $k$. 
Let $M$ be an invertible sheaf on $C$. 
It is well-known that 
$$\chi(C, mM)=\dim_k(H^0(C, mM))-\dim_k(H^1(C, mM)) \in\Z[m]$$ 
and the degree of this polynomial is at most one 
(cf. \cite[Ch I, Section 1, Theorem in page 295]{Kle66}). 
We define the {\em degree} of $M$ over $k$, 
denoted by $\deg_k M$ or $\deg M$, to be the coefficient of $m$. }
\item{Let $B$ be a scheme satisfying Assumption~\ref{a-base}. 
Let $X$ be a quasi-projective $B$-scheme, and 
let $C \hookrightarrow X$ be a closed immersion 
such that $C$ is a projective $B$-curve 
and the structure morphism $C \to B$ factors through $\Spec\,k$ 
for a field $k$. 
In particular, $C$ is projective over $k$ (cf. \cite[Ch II, Theorem 7.6]{Har77}). 
Let $L$ be an invertible sheaf on $X$. 
We define the {\em intersection number} over $k$, 
denoted by $L\cdot_k C$ or $L \cdot C$, to be $\deg_{k}(L|_C).$}
\end{enumerate}
\end{dfn}

The following is the Riemann--Roch theorem for curves. 

\begin{thm}\label{RR-curve}
Let $k$ be a field. 
Let $X$ be a projective curve over $k$. 
\begin{enumerate}
\item{If $L$ is a Cartier divisor on $X$, then 
$$\chi(X, L)=\deg_k(L)+\chi(X, \MO_X).$$}
\item{Assume that $X$ is regular. 
Let $D$ be a $\Z$-divisor and 
let $D=\sum_{i\in I} a_iP_i$ be the prime decomposition. 
Then 
$$\deg_k(D)=\sum_{i\in I} a_i\dim_k(k(P_i))$$
where $k(P_i)$ is the residue field at $P_i$. }
\end{enumerate}
\end{thm}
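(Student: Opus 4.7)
Part (1) is essentially a restatement of the definition of $\deg_k$. Since by hypothesis $\chi(X, mL) \in \Z[m]$ is a polynomial in $m$ of degree at most one, we may write $\chi(X, mL) = \alpha m + \beta$, where $\alpha = \deg_k L$ by definition. Setting $m = 0$ identifies $\beta = \chi(X, \MO_X)$, and setting $m = 1$ yields the claimed formula.

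For Part (2), the essential case is a single prime divisor $D = P$. Regularity at $P$ means $\MO_{X,P}$ is a discrete valuation ring with uniformiser $t$, so the inclusion $\MO_X \subset \MO_X(P)$ as subsheaves of the constant sheaf $K(X)$ fits into a short exact sequence
$$0 \to \MO_X \to \MO_X(P) \to k(P) \to 0,$$
where the quotient at $P$ is $t^{-1}\MO_{X,P}/\MO_{X,P} \cong k(P)$, viewed as a skyscraper sheaf. Taking Euler characteristics gives $\chi(X, \MO_X(P)) - \chi(X, \MO_X) = \dim_k k(P)$, which combined with Part (1) produces $\deg_k P = \dim_k k(P)$.

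I would then bootstrap this to an effective divisor $D = \sum a_i P_i$ (with $a_i \geq 0$) by considering the filtration
$$\MO_X \subset \MO_X(P_1) \subset \cdots \subset \MO_X(a_1 P_1) \subset \MO_X(a_1 P_1 + P_2) \subset \cdots \subset \MO_X(D),$$
whose successive quotients are skyscraper sheaves isomorphic to $k(P_i)$ by the single-point analysis. Summing the $k$-dimensions and reapplying Part (1) gives the formula for effective $D$. For a general $\Z$-divisor $D$, write $D = D^+ - D^-$ as a difference of effective divisors with disjoint supports, and regard $\MO_X(D)$ and $\MO_X$ as subsheaves of $K(X)$. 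Their intersection is $\MO_X(-D^-)$, and the two quotients $\MO_X(D)/\MO_X(-D^-)$ and $\MO_X/\MO_X(-D^-)$ are skyscraper sheaves whose $k$-dimensions, by the effective case, are $\deg_k D^+$ and $\deg_k D^-$ respectively. Subtracting the two resulting Euler characteristic identities and invoking Part (1) one last time gives $\deg_k D = \deg_k D^+ - \deg_k D^- = \sum_i a_i \dim_k k(P_i)$.

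No step is genuinely difficult: the only geometric input is the single-point DVR computation, and everything else is formal manipulation of Euler characteristics along short exact sequences. The only piece of bookkeeping worth verifying is the identity $\MO_X(D) \cap \MO_X = \MO_X(-D^-)$ inside $K(X)$, which is checked stalkwise at each point in the support of $D$.
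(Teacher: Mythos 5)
Your argument is correct and is essentially the expanded form of the paper's proof, which for part (1) also observes that it is a tautology from Definition~2.8(1) (setting $m=0$ and $m=1$ in the Euler polynomial), and for part (2) simply points to Hartshorne, Ch.~IV, Theorem~1.3 and remarks that the same argument works over a general field. Your single-point exact sequence $0 \to \MO_X \to \MO_X(P) \to k(P) \to 0$, the filtration for effective $D$, and the $D = D^+ - D^-$ passage are precisely what that reference reduces to once one replaces the residue field $k$ by $k(P)$ and keeps track of $\dim_k k(P)$.
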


\begin{proof}
The assertion (1) follows from Definition~\ref{def-intersection}(1). 
The assertion (2) holds by the same argument as the case where $k$ is algebraically closed 
(cf. the proof of \cite[Ch~IV, Theorem~1.3]{Har77}). 
\end{proof}

As a corollary, 
we obtain a formula between $\deg(\omega_X)$ and $\chi(X, \MO_X)$ for Gorenstein curves.

\begin{cor}\label{genus-formula}
Let $k$ be a field. 
Let $X$ be a projective Gorenstein curve over $k$. 
Then 
$$\deg_k(\omega_X)=2(h^1(X, \MO_X)-h^0(X, \MO_X))=-2\chi(X, \MO_X).$$
\end{cor}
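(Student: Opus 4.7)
The plan is to combine the Riemann--Roch theorem for $\omega_X$ with Serre duality. Since $X$ is Gorenstein, the dualising sheaf $\omega_X$ is invertible, so it corresponds to a Cartier divisor $K_X$ on $X$, and Theorem~\ref{RR-curve}(1) applies to give
$$\chi(X,\omega_X)=\deg_k(\omega_X)+\chi(X,\MO_X).$$

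Next, I would invoke Serre duality for the projective Gorenstein curve $X/k$: for any locally free sheaf $\mathcal F$ of finite rank, there is an isomorphism of $k$-vector spaces $H^i(X,\mathcal F)^\vee\simeq H^{1-i}(X,\mathcal F^{\vee}\otimes\omega_X)$. Applying this with $\mathcal F=\MO_X$ yields
$$h^0(X,\MO_X)=h^1(X,\omega_X),\qquad h^1(X,\MO_X)=h^0(X,\omega_X),$$
and hence $\chi(X,\omega_X)=-\chi(X,\MO_X)$. Substituting this into the Riemann--Roch formula above gives
$$\deg_k(\omega_X)=-2\chi(X,\MO_X)=2\bigl(h^1(X,\MO_X)-h^0(X,\MO_X)\bigr),$$
which is the desired equality.

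The only mildly delicate point is to ensure that Serre duality is available in this generality, i.e.\ for a projective Gorenstein curve over an arbitrary field $k$, not necessarily algebraically closed. This is, however, a direct consequence of Grothendieck duality applied to the structure morphism $X\to \Spec\,k$ (as developed in \cite[Ch.~III]{Har66}): the dualising complex is concentrated in a single degree and equals $\omega_X[1]$ because $X$ is Gorenstein of dimension one, and since $\omega_X$ is locally free one obtains the Ext/cohomology identification used above. Everything else is formal, so there is no real obstacle beyond citing this duality correctly.
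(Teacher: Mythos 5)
Your proof is correct and is essentially identical to the paper's: both apply Theorem~\ref{RR-curve}(1) to $\omega_X$ and then use Serre duality to get $\chi(X,\omega_X)=-\chi(X,\MO_X)$. The additional paragraph justifying Serre duality for Gorenstein curves over an arbitrary field via \cite[Ch.~III]{Har66} is a reasonable elaboration but does not change the argument.
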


\begin{proof}
By Theorem~\ref{RR-curve}(1), we obtain $\chi(X, \omega_X)=\deg(\omega_X)+\chi(X, \MO_X).$ 
By Serre duality, we obtain $\chi(X, \omega_X)=-\chi(X, \MO_X)$, which implies the assertion. 
\end{proof}

\begin{cor}\label{adjunction-genus}
Let $B$ be a scheme satisfying Assumption~\ref{a-base}. 
Let $X$ be a quasi-projective regular $B$-surface, and 
let $C \hookrightarrow X$ be a closed immersion over $B$ 
such that $C$ is a projective $B$-curve 
and the structure morphism $C \to B$ factors through $\Spec\,k$ 
for a field $k$. 
Then, 
$$(K_{X/B}+C)\cdot_k C=\deg_k \omega_{C/B}=\deg_k \omega_{C/k}$$
$$=2(\dim_kH^1(C, \MO_C)-\dim_kH^0(C, \MO_C)).$$
\end{cor}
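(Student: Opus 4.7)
The plan is to combine three ingredients: the adjunction formula for the Cartier divisor $C$ on the regular surface $X$, a base-change identity comparing $\omega_{C/B}$ with $\omega_{C/k}$, and Corollary~\ref{genus-formula} applied to the Gorenstein curve $C$. The starting observation is that since $X$ is regular and $C$ is an effective Cartier divisor, the closed immersion $\iota\colon C\hookrightarrow X$ is a regular embedding of codimension one, so $C$ is a local complete intersection in a regular scheme and in particular Gorenstein; thus $\omega_{C/B}$ and $\omega_{C/k}$ are both invertible sheaves on $C$.

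First I would apply Grothendieck duality to $\iota$: regularity of the embedding gives $\iota^!(-)\simeq \iota^*(-)\otimes \MO_X(C)|_C$ up to a cohomological shift, and since $\omega_{X/B}$ is the appropriate cohomology sheaf of $\alpha_X^!\MO_B$, taking $\mathcal H^\bullet$ yields the adjunction isomorphism
$$\omega_{C/B}\simeq \iota^*\bigl(\omega_{X/B}\otimes \MO_X(C)\bigr)\simeq \MO_X(K_{X/B}+C)|_C.$$
Taking degrees over $k$, Definition~\ref{def-intersection}(2) then gives $\deg_k\omega_{C/B}=(K_{X/B}+C)\cdot_k C$.

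Next I would identify $\omega_{C/B}$ with $\omega_{C/k}$ by imitating the proof of Lemma~\ref{l-bc} for the factorisation $C\xrightarrow{f}\Spec k\xrightarrow{\theta} B$. Transitivity of upper-shriek together with the projection formula, exactly as in that lemma, give
$$\omega_{C/B}\simeq \omega_{C/k}\otimes_{\MO_C} f^*\omega_{\Spec k/B}.$$
Because $\omega_{\Spec k/B}$ is an invertible sheaf on the single-point scheme $\Spec k$ it is isomorphic to $\MO_{\Spec k}$, so $f^*\omega_{\Spec k/B}\simeq \MO_C$ and hence $\omega_{C/B}\simeq \omega_{C/k}$. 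Finally, applying Corollary~\ref{genus-formula} to the projective Gorenstein curve $C$ over $k$ gives $\deg_k\omega_{C/k}=2(\dim_k H^1(C,\MO_C)-\dim_k H^0(C,\MO_C))$, and chaining the three identities closes the argument.

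The hard part will be making the second step fully rigorous, because the morphism $\theta\colon\Spec k\to B$ need not be of finite type (for instance when $B=\Spec\,\Z$ and $k$ is a number field), so Lemma~\ref{l-bc} as stated does not apply verbatim and $\Spec k$ is not a $B$-variety in the paper's sense. One resolves this either by invoking a slightly more general version of Grothendieck duality---using only that any dualizing complex on a field is a shift of a one-dimensional vector space---or by replacing $B$ by a sufficiently small open subscheme so that $C\to B$ factors through a finite-type intermediate scheme, at which point Lemma~\ref{l-bc} applies directly.
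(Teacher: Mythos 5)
Your proposed chain of equalities is exactly the paper's argument: the first equality is the adjunction formula for the regular embedding $C\hookrightarrow X$ (the paper cites \cite[(4.1.3)]{Kol13}, and your Grothendieck-duality derivation is the standard proof of it), the second is Lemma~\ref{l-bc} applied to $C\to\Spec k\to B$, and the third is Corollary~\ref{genus-formula}. So the approach matches.

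Your worry in the final paragraph, however, is unfounded, and it is worth seeing why so that you do not build unnecessary scaffolding. Because $C$ is \emph{projective} over $B$, the morphism $C\to B$ is proper, so its set-theoretic image in $B$ is closed; but that image is also a single point (the image of the one-point scheme $\Spec k$), hence a \emph{closed} point $b\in B$. Thus $\kappa(b)\subset k\subset K(C)$, and since $C$ has dimension one both as a $\kappa(b)$-variety and as a $k$-variety (the paper notes $C$ is projective over $k$ via \cite[Ch.~II, Thm.~7.6]{Har77}), comparing transcendence degrees gives $\mathrm{trdeg}(k/\kappa(b))=0$; as $k$ sits algebraically inside the finitely generated extension $K(C)/\kappa(b)$, the extension $k/\kappa(b)$ is in fact \emph{finite}. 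Consequently $\Spec k\to B$ is a finite morphism onto a closed point, hence quasi-projective, and $\Spec k$ is a regular excellent separated normal $B$-variety of dimension zero; Lemma~\ref{l-bc} applies verbatim with $B'=\Spec k$, no shrinking of $B$ or ad hoc duality needed. In particular, your candidate counterexample ($B=\Spec\Z$, $k$ a number field) cannot occur: there the image of $\Spec k$ is the generic point of $\Spec\Z$, which is not closed, contradicting properness of $C\to B$.
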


\begin{proof}
The first equality holds by the adjunction formula (cf. \cite[(4.1.3)]{Kol13}), 
the second one by Lemma~\ref{l-bc}, and the third one 
by Corollary~\ref{genus-formula}. 
\end{proof}

The following is the Riemann--Roch theorem for surfaces.

\begin{thm}\label{RR-surface}
Let $k$ be a field. 
Let $X$ be a projective regular surface over $k$. 
Let $D$ be a $\Z$-divisor on $X$. 
Then 
$$\chi(X, D)=\chi(X, \MO_X)+\frac{1}{2}D\cdot_k (D-K_X).$$
\end{thm}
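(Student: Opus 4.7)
The plan is to define $f(D) := \chi(X, D) - \chi(X, \MO_X) - \frac{1}{2} D \cdot_k (D - K_X)$ for every $\Z$-divisor $D$ on $X$ and to prove that $f(D) = 0$ for all $D$ by induction on the prime components. Concretely, I aim to show $f(D) = f(D - C)$ for every prime divisor $C$ on $X$; since any $\Z$-divisor is a finite $\Z$-linear combination of prime divisors, iterating this identity (and its reverse $f(D) = f(D + C)$) reduces the question to $D = 0$, where $f(0) = 0$ by construction.

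Fix a prime divisor $C$. Because $X$ is regular, $C$ is an integral Cartier divisor, so there is a short exact sequence
$$0 \to \MO_X(D - C) \to \MO_X(D) \to \MO_X(D)|_C \to 0,$$
which gives $\chi(X, D) - \chi(X, D - C) = \chi(C, \MO_X(D)|_C)$. Since $C$ is a projective curve over $k$, Theorem~\ref{RR-curve}(1) yields $\chi(C, \MO_X(D)|_C) = D \cdot_k C + \chi(C, \MO_C)$. Moreover $C$, being an effective Cartier divisor in the regular scheme $X$, is locally a complete intersection and hence Gorenstein, so Corollary~\ref{genus-formula} combined with Corollary~\ref{adjunction-genus} gives $\chi(C, \MO_C) = -\frac{1}{2}\deg_k \omega_C = -\frac{1}{2}(K_X + C) \cdot_k C$. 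Hence
$$\chi(X, D) - \chi(X, D - C) = D \cdot_k C - \frac{1}{2}(K_X + C) \cdot_k C.$$

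The remaining step is a purely formal calculation using bilinearity of the intersection pairing:
$$\frac{1}{2}\bigl[D \cdot_k (D - K_X) - (D - C) \cdot_k (D - C - K_X)\bigr] = D \cdot_k C - \frac{1}{2}(K_X + C) \cdot_k C,$$
which exactly matches the cohomological difference computed above. Therefore $f(D) - f(D - C) = 0$, and iterating over the prime components of $D$ reduces everything to the trivial case $D = 0$.

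I do not expect a genuine obstacle, since every ingredient is already available in the text; the only point to verify carefully is that an arbitrary prime divisor $C$ on the regular surface $X$ is both integral (automatic from the definition of a prime divisor) and Gorenstein, which follows from the fact that $C$ is a Cartier divisor in a regular scheme, hence a local complete intersection.
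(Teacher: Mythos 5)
Your proposal is correct and follows essentially the same route as the paper: the paper also reduces to showing that the formula is stable under adding or subtracting a prime divisor $C$, and checks this via Corollary~\ref{adjunction-genus}. Your write-up simply fills in the details (the short exact sequence, the Riemann--Roch computation on $C$, the Gorenstein check, and the bilinearity identity) that the paper leaves implicit.
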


\begin{proof}
We can write $D=\sum_i A_i-\sum_j B_j$, where all $A_i$ and $B_j$ are prime divisors. 
Thus, it suffices to prove that if $D$ satisfies the required equation, so do $D+C$ and $D-C$ 
for a prime divisor $C$ on $X$. 
These can be checked directly by using Corollary~\ref{adjunction-genus}. 
\end{proof}

\subsection{Negativity lemma}\label{ss-negativity}

\begin{lem}\label{l-negativity}
Let $B$ be a scheme satisfying Assumption~\ref{a-base}. 
Let $f:X \to Y$ be a projective birational $B$-morphism of 
quasi-projective normal $B$-surfaces. 
Let $-B$ be an $f$-nef $\R$-Cartier $\R$-divisor on $X$. 
Then the following hold. 
\begin{enumerate}
\item $B$ is effective if and only if $f_*B$ is effective. 
\item Assume that $B$ is effective. 
For any point $y \in Y$, one of $f^{-1}(y) \subset \Supp B$ and 
$f^{-1}(y) \cap \Supp B=\emptyset$ holds. 
\end{enumerate}
\end{lem}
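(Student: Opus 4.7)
The plan is to imitate the classical Zariski--Mumford negativity lemma, with attention to the excellent setting and $\R$-coefficients. Throughout I write the divisor as $D$ to avoid collision with the base scheme $B$; that is, I assume $-D$ is $f$-nef.

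For (1), the ``only if'' direction is immediate since $f_*$ preserves effectivity. For the converse, assume $f_*D \geq 0$ and decompose $D = D^+ - D^-$ with $D^\pm \geq 0$ having no common prime components. The first observation is that $D^-$ is purely $f$-exceptional: any non-exceptional prime in $D^-$ pushes forward to a nonzero prime on $Y$ with strictly negative coefficient in $f_*D$, and this cannot be cancelled by any component of $D^+$, since non-exceptional primes on $X$ map bijectively onto the corresponding primes on $Y$ and $D^+$ shares no prime with $D^-$. Assuming $D^- \neq 0$, I fix a point $y \in f(\Supp D^-)$ and restrict attention to the prime exceptional components $E_1,\dots,E_m$ lying over $y$; exceptional curves over distinct points of $Y$ are disjoint, so the relevant intersection numbers only involve these. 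For each $j$, the inequality $(-D)\cdot E_j \geq 0$ together with $D^+\cdot E_j \geq 0$ (valid since $E_j$ is not a component of $D^+$) yields $D^-\cdot E_j \geq 0$. Writing $D^-|_y = \sum_i b_i E_i$ with $b_i \geq 0$ not all zero, multiplying by $b_j$ and summing over $j$ gives
\[
(D^-|_y)^2 \;=\; \sum_{i,j} b_i b_j (E_i\cdot E_j) \;=\; \sum_j b_j (D^-\cdot E_j) \;\geq\; 0,
\]
which contradicts the classical negative definiteness of the intersection matrix of exceptional curves over a point on a normal surface (cf.\ \cite[Theorem~10.1]{Kol13}), since $D^-|_y \neq 0$.

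For (2), the assertion is trivial at points $y$ where $f$ is a local isomorphism, so I may assume $y \in f(\Ex(f))$ and $f^{-1}(y) = E_1 \cup \dots \cup E_n$ is a connected union of exceptional prime curves, connectedness being supplied by Zariski's main theorem since $Y$ is normal. Arguing by contradiction, suppose $f^{-1}(y) \cap \Supp D \neq \emptyset$ and $f^{-1}(y) \not\subset \Supp D$. By connectedness there exists some $E_k$ with $E_k \not\subset \Supp D$ that meets $\Supp D$---either through intersecting an adjacent $E_j \subset \Supp D$, or through a non-exceptional component of $D$ passing through $E_k$. In either case, since $E_k$ is not a component of $D$, the restriction $D|_{E_k}$ is a nontrivial effective $\R$-divisor on the proper curve $E_k$, hence $D \cdot E_k > 0$, contradicting $(-D)\cdot E_k \geq 0$.

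The main input, and the only non-formal ingredient, is the negative definiteness of the intersection pairing on the prime exceptional curves lying over a single point; this is the Mumford--Artin theorem, and in the excellent setting it is obtained by combining Lipman's resolution of singularities with a Hodge-index-style argument. Once that is granted, the rest of the proof is formal book-keeping on components, and the passage from the classical $\Q$-divisor version to $\R$-divisors requires no change since all identities used are $\R$-linear in the coefficients.
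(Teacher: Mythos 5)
Your proof is correct, and it follows essentially the same route that the paper outlines: the paper's proof is a one-line pointer to the argument of \cite[Lemma 3.39]{KM98}, using \cite[Theorem 10.1]{Kol13} for the negative definiteness of the exceptional intersection matrix, and your write-up is a detailed version of exactly that argument. In (1) you correctly isolate the negative part $D^-$, show it is $f$-exceptional, and derive $(D^-|_y)^2\geq 0$ to contradict negative definiteness; in (2) you use normality of $Y$ for connectedness of fibres and then the standard ``boundary curve'' argument. One tiny imprecision: in (1) you assert $D^+\cdot E_j\geq 0$ ``since $E_j$ is not a component of $D^+$'' for all $j$, but this is only guaranteed when $b_j>0$; since those are the only terms that survive the weighted sum $\sum_j b_j(D^-\cdot E_j)$, the argument is unaffected, but you should restrict the quantifier to components of $D^-$.
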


\begin{proof}
We can apply the same argument as in \cite[Lemma 3.39]{KM98} 
by using \cite[Theorem 10.1]{Kol13}. 
\end{proof}

\subsection{Cone theorem}\label{ss-cone}

Let $B$ be a scheme satisfying Assumption~\ref{a-base}. 
Let $\pi:X \to S$ be a projective $B$-morphism 
from a quasi-projective normal $B$-variety $X$ 
to a quasi-projective $B$-scheme $S$. 
Let 
$$Z(X/S)_{\R}:=\bigoplus_C \R C,$$ 
where $C$ runs over the projective $B$-curves in $X$ such that 
$\pi(C)$ is one point. 
An element $\sum \alpha_C C$ of $Z(X/S)_{\R}$ is called an 
{\em effective} $1$-{\em cycle} if each $\alpha_C$ is not negative. 
Let $N(X/S)_{\R}:=Z(X/S)_{\R}/\equiv$ be  
the quotient $\R$-vector space by numerical equivalence, 
i.e. for $\Gamma_1, \Gamma_2 \in Z(X/S)_{\R}$, 
the numerical equivalence $\Gamma_1 \equiv \Gamma_2$ holds if and only if 
$L \cdot \Gamma_1=L \cdot \Gamma_2$ 
for any invertible sheaf $L$ on $X$. 
For an element $\Gamma \in Z(X/S)_{\R}$, 
its numerical equivalence class is denoted by $[\Gamma]$. 
Let ${\rm NE}(X/S)$ be the subset of $N(X/S)_{\R}$ 
consisting of the numerical equivalence classes of effective 1-cycles. 
If $N(X/S)_{\R}$ is a finite dimensional $\R$-vector space, 
then we set $\overline{{\rm NE}}(X/S)$ 
to be the closure of ${\rm NE}(X/S)$ with respect to the Euclidean topology. 
In this case, we set 
$$\overline{{\rm NE}}(X/S)_{L \geq 0}:=\{[\Gamma] \in \overline{{\rm NE}}(X/S)\,|\,L \cdot \Gamma \geq 0\}$$
for any $\R$-Cartier $\R$-divisor $L$ on $X$. 

\begin{rem}
For the Stein factorisation $\pi:X \to T \to S$ of $\pi$, 
we have that $N(X/S)_{\R}=N(X/T)_{\R}$ and ${\rm NE}(X/S)={\rm NE}(X/T)$. 
\end{rem}

\begin{lem}\label{l-pic-number}
Let $B$ be a scheme satisfying Assumption~\ref{a-base}. 
Let $\pi:X \to S$ be a projective $B$-morphism 
from a quasi-projective normal $B$-surface to a quasi-projective $B$-scheme. 
Then the following hold. 
\begin{enumerate}
\item If $\dim \pi(X) \geq 1$, then 
there exist finitely many projective $B$-curves $C_1, \cdots, C_q$ 
such that each $\pi(C_i)$ is one point and 
$${\rm NE}(X/S)=\sum_{i=1}^q \R_{\geq 0}[C_i].$$
\item $N(X/S)_{\R}$ is a finite dimensional $\R$-vector space. 
\end{enumerate}
\end{lem}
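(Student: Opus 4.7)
The plan is to take the Stein factorisation $\pi : X \xrightarrow{\pi'} T \xrightarrow{g} S$, reduce the statement to $\pi' : X \to T$, and then split on $\dim T = \dim \pi(X)$. Since $g$ is finite, for any projective $B$-curve $C \subset X$ the image $\pi(C)$ is a point if and only if $\pi'(C)$ is a point, and intersections $L \cdot C$ are preserved; hence ${\rm NE}(X/S) = {\rm NE}(X/T)$ and $N(X/S)_{\R} = N(X/T)_{\R}$. The scheme $T$ is integral (as the image of the integral scheme $X$) and normal (since $\pi'_*\MO_X$ is integrally closed in $K(X)$), and the relation $\pi'_*\MO_X = \MO_T$ makes $\pi'$ have connected fibres.

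The two extreme cases are short. If $\dim T = 2$, then $\pi'$ is generically finite with connected fibres, hence birational, and its exceptional locus is a proper closed subset of the Noetherian surface $X$ whose finitely many one-dimensional components are exactly the vertical curves; this gives both (1) and (2). If $\dim T = 0$, then $T = \Spec k$ for a field $k$, so $X$ is a projective surface over $k$, and (2) is the classical N\'eron--Severi theorem.

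The substantive case is $\dim T = 1$. Here $T$ is a regular $1$-dimensional integral scheme, and $\pi'$ is flat because any dominant morphism from an integral scheme to a Dedekind scheme is flat. Every vertical curve lies in some closed fibre $X_t$. Since the generic fibre $X_\eta$ is irreducible (its only generic point is that of $X$), upper semicontinuity of the number of irreducible components of fibres of a proper morphism forces $\Lambda := \{t \in T : X_t \text{ is reducible}\}$ to be a proper closed subset of $T$, hence a finite set of closed points. For $t \in \Lambda$ let $C_{t,1},\ldots,C_{t,n_t}$ be the irreducible components of $X_t$; for $t \notin \Lambda$ let $C_t := (X_t)_{\mathrm{red}}$ and write the fundamental $1$-cycle of $X_t$ as $m_t C_t$ with $m_t$ the generic length. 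By flatness both $\chi(X_t, L|_{X_t})$ and $\chi(X_t, \MO_{X_t})$ are constant on the connected scheme $T$, so Theorem~\ref{RR-curve}(1) shows that the intersection number $L \cdot X_t$ is independent of $t$ for every invertible sheaf $L$ on $X$; equivalently $[X_t] = [X_{t'}]$ in $N(X/T)_{\R}$ for any closed points $t, t'$. Fixing $t_0 \notin \Lambda$, this forces $[C_t] \in \R_{>0}[C_{t_0}]$ for every $t \notin \Lambda$, so
\[
{\rm NE}(X/T) \;=\; \R_{\geq 0}[C_{t_0}] \;+\; \sum_{t \in \Lambda}\sum_{i=1}^{n_t} \R_{\geq 0}[C_{t,i}].
\]
Part (1) follows, and (2) follows since the span of ${\rm NE}(X/T)$ is all of $N(X/T)_{\R}$.

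The main obstacle is the $\dim T = 1$ case, specifically combining the upper-semicontinuity input that makes $\Lambda$ finite with the flatness-plus-Riemann--Roch argument that equalises all fibre cycle classes $[X_t]$ in $N(X/T)_{\R}$. A secondary bookkeeping point is that when $t \notin \Lambda$ one must interpret $X_t$ as the fundamental cycle $m_t C_t$, so that $L \cdot X_t = m_t(L \cdot C_t)$ gives the proportionality of the $[C_t]$'s used at the end.
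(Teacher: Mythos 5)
Your overall strategy mirrors the paper's proof: Stein factorisation, irreducibility of the generic fibre, finiteness of the set of bad fibres, plus the observation that irreducible fibres all represent the same numerical class. The extra Riemann--Roch / flatness computation equalising the fibre classes $[X_t]$ is a nice explicit version of a step the paper leaves implicit, and your separate treatment of $\dim T = 2$ and $\dim T = 0$ also matches the paper's (terser) remarks.

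There is, however, a genuine gap in the $\dim T = 1$ case. The principle you invoke --- ``upper semicontinuity of the number of irreducible components of fibres of a proper morphism'' --- is not a theorem, and irreducibility of $X_\eta$ over $K(T)$ alone does not give it. For example, let $T = \mathbb A^1_{\Q}$, let $X \to T$ be the double cover $x^2 = t$ (so $X = \mathbb A^1$), and let $X' = X \times \mathbb P^1 \to T$: the generic fibre $X'_\eta$ is irreducible over $\Q(t)$, yet $X'_t$ is reducible for infinitely many closed $t$ (all rational squares). The relevant constructibility result (EGA IV, 9.7.8) is about \emph{geometric} irreducibility of fibres, so what one actually needs is that $X_\eta$ is geometrically irreducible over $K(T)$; this is where $\pi'_*\MO_X = \MO_T$ and the normality of $X$ enter in an essential way (note that in the counterexample $\pi'_*\MO_{X'} \ne \MO_T$). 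The paper handles this precisely by citing \cite[Lemma~2.2]{Tan16} for geometric irreducibility of the generic fibre, and then spreads out. So you should replace your appeal to ``upper semicontinuity of the number of irreducible components'' by: (i) geometric irreducibility of $X_\eta$ (via the Stein factorisation condition and normality, or by citing the same lemma as the paper), and (ii) constructibility of the geometrically irreducible locus, from which finiteness of $\Lambda$ follows because $T$ is one-dimensional.
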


\begin{proof}
Replacing $\pi$ by its Stein factorisation, we may assume that $\pi_*\MO_X=\MO_S$. 

We show (1). 
We only treat the case where $\dim S=1$ because the proofs are the same. 
Since the generic fibre $X \times_S K(S)$ is geometrically irreducible (cf. \cite[Lemma 2.2]{Tan16}), 
there is a non-empty open subset $S^0$ of $S$ such that 
$X_s$ is an irreducible curve over $k(s)$ for any $s \in S^0$. 
Since $S \setminus S^0$ is a finite set, 
$\text{NE}(X/S)$ is generated by only finitely many $B$-curves 
$C_1, \cdots, C_q$. 
Thus (1) holds. 

We show (2). 
If $\dim S \geq 1$, then the assertion holds by (1). 
If $\dim S=0$, then the assertion follows from \cite[Ch. IV, Section 1, Prop 4]{Kle66}. 
In any case, (2) holds. 
\end{proof}

\begin{thm}\label{t-cone}
Let $B$ be a scheme satisfying Assumption~\ref{a-base}. 
Let $\pi:X \to S$ be a projective $B$-morphism 
from a quasi-projective normal $B$-surface to a quasi-projective $B$-scheme. 
Let $A$ be a $\pi$-ample $\R$-Cartier $\R$-divisor on $X$ and 
let $\Delta$ be an effective $\R$-divisor on $X$ such that $K_{X/B}+\Delta$ 
is $\R$-Cartier. 
Then there exist finitely many projective $B$-curves $C_1, \cdots, C_r$ on $X$ 
that satisfy the following properties: 
\begin{enumerate}
\item $\pi(C_i)$ is one point for any $1\leq i \leq r$, and  
\item 
$\overline{{\rm NE}}(X/S)=\overline{{\rm NE}}(X/S)_{K_{X/B}+\Delta+A \geq 0}+
\sum_{i=1}^r \R_{\geq 0}[C_i]$.
\end{enumerate}
\end{thm}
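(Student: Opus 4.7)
My plan is to handle the two cases $\dim \pi(X) \geq 1$ and $\dim \pi(X) = 0$ separately, after a preliminary Stein-factorisation reduction. Replacing $\pi \colon X \to S$ by its Stein factorisation $X \to T \to S$ leaves both $\overline{{\rm NE}}(X/S)$ and the conclusion of the theorem unchanged (compare the remark immediately preceding Lemma~\ref{l-pic-number}), so I may assume throughout that $\pi_*\MO_X = \MO_S$.

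If $\dim \pi(X) \geq 1$, the result is essentially immediate from Lemma~\ref{l-pic-number}(1). That lemma produces finitely many projective $B$-curves $C_1, \ldots, C_q$ contracted by $\pi$ whose classes generate ${\rm NE}(X/S)$ as a convex cone. A finitely generated convex cone in the finite-dimensional $\R$-vector space $N(X/S)_\R$ is polyhedral, hence automatically closed, so $\overline{{\rm NE}}(X/S) = \sum_{i=1}^q \R_{\geq 0}[C_i]$. Setting $I_- := \{i : (K_{X/B}+\Delta+A)\cdot C_i < 0\}$, every element $v = \sum a_i[C_i] \in \overline{{\rm NE}}(X/S)$ splits as $\sum_{i \notin I_-} a_i[C_i] + \sum_{i \in I_-} a_i[C_i]$, and the first summand lies in $\overline{{\rm NE}}(X/S)_{K_{X/B}+\Delta+A \geq 0}$ by construction. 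The curves indexed by $I_-$ therefore provide the required finite family.

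The substantial case is $\dim \pi(X) = 0$, in which $S = \Spec k$ for a field $k$ and $X$ is a projective normal surface over $k$. Lemma~\ref{l-pic-number}(2) gives $\dim_\R N(X/\Spec k)_\R < \infty$, and Kleiman's criterion applied to the $\pi$-ample divisor $A$ (together with Remark~\ref{r-fibrewise-ample}) ensures $A \cdot \gamma > 0$ for every nonzero $\gamma \in \overline{{\rm NE}}(X/\Spec k)$. Consequently, the affine slice $\overline{{\rm NE}}(X/\Spec k) \cap \{A \leq 1\}$ is Euclidean-compact, as is its intersection with the closed half-space $\{K_{X/B}+\Delta+A \leq 0\}$. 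To extract finitely many generating curves from this compact negative locus, I would reduce to algebraically closed base field: base change $X$ along $\bar k / k$, pass to the normalisation $\tilde X$ of $X_{\bar k}$, apply to each irreducible component the cone theorem for projective normal surfaces over an algebraically closed field (case (i) of the introduction, via \cite{Fuj11}, \cite{FT12}, \cite{Tan14}), and push the finitely many resulting extremal curves forward to $X$ as $1$-cycles. The main obstacle lies precisely in this base-change step: $X_{\bar k}$ is not generally normal or even irreducible, so one must verify that numerical equivalence and intersection numbers on $\tilde X$ descend compatibly to $\overline{{\rm NE}}(X/\Spec k)$, and that the finitely many extremal families on each component assemble into a single finite family on $X$. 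Once this compatibility is checked, compactness of the $(K_{X/B}+\Delta+A)$-negative portion of the slice forces such a family to suffice, yielding the required $C_1, \ldots, C_r$.
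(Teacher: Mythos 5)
Your proposal follows the same skeleton as the paper: reduce by Stein factorisation, settle the case $\dim\pi(X)\geq 1$ by Lemma~\ref{l-pic-number}(1) (your added observation that a finitely generated cone in a finite-dimensional space is automatically closed, so $\overline{{\rm NE}}={\rm NE}$, is a correct and useful elaboration), and then treat the absolute case $S=\Spec k$ separately. The divergence, and the gap, is in that absolute case.

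Where the paper simply cites \cite[Theorem 7.5]{Tan16} for $\operatorname{char} k>0$, \cite[Theorem 1.1]{Fuj11} for $k$ algebraically closed of characteristic zero, and remarks that the argument of \cite{Tan16} combined with \cite{Fuj11} handles the remaining case, you propose to reprove this descent yourself: base change to $\bar k$, normalise, apply the cone theorem over an algebraically closed field to each component, and push the extremal curves back to $X$. You candidly flag the descent of numerical equivalence and intersection numbers through $\tilde X\to X_{\bar k}\to X$ as ``the main obstacle'' and leave it unverified. But that obstacle is precisely the entire technical content of \cite[Theorem 7.5]{Tan16}: when $\bar k/k$ is inseparable, $X_{\bar k}$ can fail to be reduced, let alone normal or irreducible, the morphism from the normalisation of $(X_{\bar k})_{\red}$ to $X$ is a purely inseparable universal homeomorphism rather than \'etale, and one must track how $K_X$, intersection numbers, and the pair $(X,\Delta)$ transform under it. Declaring compactness of the negative slice and then asserting ``once this compatibility is checked, compactness forces such a family to suffice'' does not close that gap; compactness by itself does not produce finite generation, and the genuine work of the descent is left undone. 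You should either carry out the descent (effectively redoing \cite{Tan16}, including the purely inseparable subtleties) or cite it as the paper does.
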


\begin{proof}
Replacing $\pi$ by its Stein factorisation, we may assume that $\pi_*\MO_X=\MO_S$. 
If $\dim S \geq 1$, then the assertion holds by Lemma~\ref{l-pic-number}. 
Thus we may assume that $\dim S=0$, 
i.e. $S=\Spec\,k$ where $k$ is a field. 
If $k$ is of characteristic $p>0$, then the assertion follows 
from \cite[Theorem 7.5]{Tan16}. 
If $k$ is an algebraically closed field of characteristic zero, 
then the claim holds by \cite[Theorem 1.1]{Fuj11}. 
If $k$ is a field of characteristic zero (not necessarily algebraically closed), 
then we can apply the same argument as in \cite[Theorem 7.5]{Tan16} 
using \cite[Theorem 1.1]{Fuj11}. 
We are done. 
\end{proof}

\begin{rem}
We use the same notation as in the statement of Theorem \ref{t-cone}. 
Assume that $(X, \Delta)$ is log canonical. 
If $B$ is the affine spectrum of an algebraically closed field, then $C_i$ can be chosen 
to be a curve such that $-(K_X+\Delta) \cdot C_i \leq 3$ 
(cf. \cite[Proposition 3.8]{Fuj12}, \cite[Proposition 3.15]{Tan14}). 
The author does not know whether a similar result can be formulated 
in this generality (cf. \cite[Example 7.3]{Tan16}). 
\end{rem}

\section{Vanishing theorems of Kawamata--Viehweg type}\label{s-vanishing}

\subsection{Relative case}

A key result in this subsection is Proposition~\ref{p-rel-reg-kvv}. 
As explained in Subsection \ref{ss-sketch}, 
we can apply the same argument as in \cite[Theorem 10.4]{Kol13}. 
More specifically, our proofs of 
Lemma~\ref{l-semi-negative} and Proposition~\ref{p-rel-reg-kvv} are 
quite similar to the ones of \cite[Lemma 10.3.3]{Kol13} and \cite[Theorem 10.4]{Kol13} respectively. 
However we give proofs of them for the sake of completeness.

\begin{lem}\label{l-semi-negative}
Let $B$ be a scheme satisfying Assumption~\ref{a-base}. 
Let $\pi:X \to S$ be a projective $B$-morphism 
from a quasi-projective regular $B$-surface to a quasi-projective $B$-curve 
with $\pi_*\MO_X=\MO_S$. 
Let $s\in S$ be a closed point and 
let $\pi^*(s)=\sum_{1\leq i\leq n} c_iC_i$ be the irreducible decomposition with $c_i \in \Z_{>0}$. 
Let $D=\sum_{1\leq i\leq n} d_iC_i$ be an $\mathbb{R}$-divisor on $X$. 
If $D\not\leq 0$, then there exists an integer $j$ 
such that $1\leq j\leq n$, $d_j>0$ and $C_j \cdot D \leq 0$. 
\end{lem}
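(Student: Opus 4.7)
The plan is to replicate the classical Zariski-lemma argument for the semi-negative definiteness of fibre intersection matrices, suitably adapted to the relative excellent setting.

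First I would record two preliminary observations. Since each $C_j$ is contained in the fibre $\pi^{-1}(s)$, the morphism $\pi|_{C_j}\colon C_j\to S$ factors through $\Spec\,k(s)$, so $\pi^*(s)|_{C_j}$ is the pullback of a Cartier divisor on $S$ along a constant morphism and is thus trivial as a line bundle on $C_j$. Viewing $C_j$ as a projective curve over $k(s)$ in the sense of Definition~\ref{def-intersection}, this yields $C_j\cdot \pi^*(s)=0$ for every $j$. Second, for distinct indices $i\neq j$, the curves $C_i$ and $C_j$ are distinct prime divisors on the regular surface $X$, so $C_i\cdot C_j\geq 0$: locally they are cut out by regular elements in a two-dimensional regular local ring, and the intersection multiplicity is the length of a quotient by a regular sequence.

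Next comes the main trick. Since $D\not\leq 0$, the set $\{i\mid d_i>0\}$ is nonempty, so
$$t:=\max_{1\leq i\leq n}\frac{d_i}{c_i}>0.$$
Pick an index $j$ attaining this maximum; then $d_j=tc_j>0$, which already gives the second required property of $j$. By maximality, $tc_i-d_i\geq 0$ for all $i$, with equality at $i=j$, so the $\R$-divisor
$$E:=t\pi^*(s)-D=\sum_{i=1}^n (tc_i-d_i)C_i$$
is effective and has zero coefficient along $C_j$.

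Finally I would intersect with $C_j$. The first observation gives $C_j\cdot(t\pi^*(s))=0$, so $C_j\cdot E=-C_j\cdot D$. Expanding,
$$C_j\cdot E = (tc_j-d_j)C_j^2 + \sum_{i\neq j}(tc_i-d_i)(C_i\cdot C_j),$$
where the $C_j^2$ term vanishes by the choice of $j$, and each term in the remaining sum is a product of non-negative quantities by the second observation. Hence $C_j\cdot E\geq 0$, which forces $C_j\cdot D\leq 0$. I do not expect any serious obstacle; the only subtle point is ensuring that the identity $C_j\cdot \pi^*(s)=0$ is interpreted correctly relative to the field $k(s)$ singled out by Definition~\ref{def-intersection}, and this is immediate from the fact that $\pi|_{C_j}$ is constant.
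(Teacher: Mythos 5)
Your argument is correct and essentially the same as the paper's. The paper chooses $a>0$ maximal with $c_i-ad_i\geq 0$ for all $i$, which is exactly $a=1/t$ for your $t=\max_i d_i/c_i$; the effective divisor $\sum_i(c_i-ad_i)C_i$ with zero coefficient along $C_j$ is then just $a\cdot E$ in your notation, and the intersection computation proceeds identically. You have merely made explicit two facts the paper uses tacitly, namely $C_j\cdot\pi^*(s)=0$ and $C_i\cdot C_j\geq 0$ for $i\neq j$.
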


\begin{proof}
By $D\not\leq 0$, 
there exists the maximum real number $a \in \mathbb{R}_{>0}$ such that 
$c_i-ad_i \ge 0$ holds for any $i \in \{1, \cdots, n\}$. 
Then we have that $\sum_{1\leq i \leq n} (c_i-ad_i)C_i \geq 0$ and 
$c_j-ad_j=0$ for some $j\in \{1, \cdots, n\}$ with $d_j>0$. 
Therefore, we obtain 
\begin{align*}
-aC_j\cdot D
= C_j \cdot (\pi^*(s)-aD)= C_j \cdot \sum_{i\neq j}(c_i-ad_i)C_i\geq 0,
\end{align*}
which implies $C_j\cdot D \leq 0$. 
\end{proof}

\begin{prop}\label{p-rel-reg-kvv}
Let $B$ be a scheme satisfying Assumption~\ref{a-base}. 
Let $\pi:X \to S$ be a projective $B$-morphism 
from a quasi-projective regular $B$-surface $X$ 
to a quasi-projective $B$-scheme $S$. 
Let $\Delta$ be an $\mathbb{R}$-divisor on $X$ with $0\leq \Delta <1$ and 
let $L$ be a Cartier divisor 
such that $L-(K_{X/B}+\Delta)$ is $\pi$-nef and $\pi$-big. 
If $\dim \pi(X) \geq 1$, then the equation $R^i\pi_*\MO_X(L)=0$ holds for every $i>0$. 
\end{prop}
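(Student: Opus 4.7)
The plan is to follow the strategy of \cite[Theorem 10.4]{Kol13}, with Lemma~\ref{l-semi-negative} substituting for the negative-definiteness of an exceptional configuration used when $\dim\pi(X)=2$. I focus on the case $\dim\pi(X)=1$; the case $\dim\pi(X)=2$ admits the same argument (with the strict inequalities automatic from negative-definiteness).

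\textbf{Reductions.} First pass to the Stein factorisation and shrink $S$ to an affine open, so that $\pi_*\MO_X=\MO_S$ and the target is a quasi-projective $B$-curve in the case of interest. Every fibre of $\pi$ has dimension at most one, hence $R^i\pi_*\MO_X(L)=0$ for $i\geq 2$, and it remains to show $R^1\pi_*\MO_X(L)=0$. Applying Remark~\ref{r-Kodaira-lemma} to $M:=L-(K_{X/B}+\Delta)$ writes $M=A+E$ with $A$ $\pi$-ample and $E\geq 0$, and $M-\epsilon E$ is $\pi$-ample for every $0<\epsilon\leq 1$. Choosing $\epsilon$ small enough that $\Delta+\epsilon E<1$ and replacing $\Delta$ by $\Delta+\epsilon E$, I reduce to the case where $L-(K_{X/B}+\Delta)$ is $\pi$-ample.

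\textbf{From $R^1\pi_*$ to fibre-supported divisors.} Since $R^1\pi_*\MO_X(L)$ is coherent and $S$ is Noetherian, it suffices to prove its stalks vanish at every closed point $s\in S$ with $\dim\pi^{-1}(s)=1$. By the theorem on formal functions together with the fact that every infinitesimal thickening $X\times_S\Spec(\MO_{S,s}/\mathfrak{m}_s^{n+1})$ is dominated by a sufficiently large multiple of the reduced fibre, this reduces to showing
\[ H^1(D,\MO_X(L)\otimes\MO_D)=0 \]
for every effective Cartier divisor $D=\sum_i d_iE_i>0$ on $X$ supported on $\pi^{-1}(s)$, where the $E_i$ are the reduced irreducible components of the fibre. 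I argue by induction on $\sum_i d_i$ using the short exact sequence
\[ 0\to\MO_{E_j}(L-(D-E_j))\to\MO_D(L)\to\MO_{D-E_j}(L)\to 0 \]
for an appropriately chosen component $E_j\subseteq\Supp D$.

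\textbf{Choice of $E_j$ and closing the induction.} Decompose $\Delta=\Delta^{\mathrm{v}}+\Delta^{\mathrm{h}}$ with $\Delta^{\mathrm{v}}$ the part supported on $\pi^{-1}(s)$. The coefficients of $D-\Delta^{\mathrm{v}}$ are strictly positive on precisely the components of $\Supp D$, since each $d_i$ is a non-negative integer while the coefficients of $\Delta^{\mathrm{v}}$ lie in $[0,1)$. Applying Lemma~\ref{l-semi-negative} to $D-\Delta^{\mathrm{v}}$ produces $j$ with $d_j\geq 1$ and $(D-\Delta^{\mathrm{v}})\cdot E_j\leq 0$. Combined with $\Delta^{\mathrm{h}}\cdot E_j\geq 0$ (as $\Delta^{\mathrm{h}}$ is effective without components in the fibre) and the strict positivity $(L-(K_{X/B}+\Delta))\cdot E_j>0$ coming from $\pi$-ampleness, this yields
\[ (L-K_{X/B}-D)\cdot E_j \;=\; (L-(K_{X/B}+\Delta))\cdot E_j+\Delta^{\mathrm{h}}\cdot E_j-(D-\Delta^{\mathrm{v}})\cdot E_j \;>\;0. \]
The inductive hypothesis kills $H^1$ of the right-hand term of the exact sequence. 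For the left-hand term, the adjunction $\omega_{E_j/k(s)}\simeq\MO_{E_j}(K_{X/B}+E_j)$ (Corollary~\ref{adjunction-genus}) together with Serre duality on the integral projective Gorenstein $k(s)$-curve $E_j$ identifies $H^1(E_j,\MO_{E_j}(L-(D-E_j)))^{\vee}$ with $H^0(E_j,\MO_{E_j}(K_{X/B}+D-L))$, which vanishes by $(K_{X/B}+D-L)\cdot_{k(s)}E_j<0$. This closes the induction.

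\textbf{Main obstacle.} The delicate point, flagged in the introduction, is that Lemma~\ref{l-semi-negative} yields only the non-strict inequality $(D-\Delta^{\mathrm{v}})\cdot E_j\leq 0$ in contrast to the strict version supplied by negative-definiteness in the birational setting. The missing strict slack is precisely what forces the reduction to the $\pi$-ample case via Kodaira's lemma; this reduction in turn makes essential use of the hypothesis $\Delta<1$, which is what permits the perturbation $\Delta+\epsilon E$ to remain an admissible boundary.
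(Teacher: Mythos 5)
Your proof is correct and follows essentially the same route as the paper's: reduce to the relatively ample case via Kodaira's lemma (Remark~\ref{r-Kodaira-lemma}), invoke the theorem on formal functions, and induct on fibre-supported divisors using Lemma~\ref{l-semi-negative} to select the next component, closing each step by adjunction, Serre duality, and a negative-degree computation. Your explicit vertical/horizontal split of $\Delta$ before applying Lemma~\ref{l-semi-negative} is a small clean-up of a step the paper leaves implicit (the paper applies the lemma to $Z-\Delta$, whose horizontal part is not literally supported on the fibre), but the argument is the same.
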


\begin{proof}
Set $K_X:=K_{X/B}$. 
By perturbing $\Delta$, we may assume $\Delta$ is a $\Q$-divisor 
such that $L-(K_X+\Delta)$ is $\pi$-ample (cf. Remark \ref{r-Kodaira-lemma}). 
We set $A:=L-(K_X+\Delta)$.

By taking the Stein factorisation of $\pi$, we may assume that $\pi_*\MO_X=\MO_S$. 
If $\dim S=2$, then the assertion follows from \cite[Theorem 10.4]{Kol13}. 
Thus we may assume that $\dim S=1$. 
Then the assertion is clear for $i > 1$. 
We prove $R^1 \pi_*\MO_X(L)=0$. 

Fix a closed point $s \in S$ and 
let $C_1, \ldots, C_n$ be the prime divisors on $X$ contracting to $s$, that is, 
$\Supp(\pi^{-1}(s))=\Supp(C_1 \cup \cdots \cup C_n)$. 
Each $C_i$ is a projective curve over $k(s)$, 
hence the intersection number over $k(s)$
of an $\R$-divisor on $X$ and $C_i$ is defined in 
Definition~\ref{def-intersection}. 
We set $h^i(C_i, F):=\dim_{k(s)}H^i(C_i, F)$ for any coherent sheaf $F$ on $C_i$. 
Note that $C_i^2 \leq 0$ for any $i \in \{1, \cdots, n\}$. 
We consider the following:

\begin{claim}
The equation $H^1(Z, L|_Z)=0$ holds for any $r_i \in \mathbb Z_{\geq 0}$ and $Z:=\sum_{i=1}^n r_iC_i$ with $\sum_{i=1}^n r_i \geq 1$. 
\end{claim}

If this claim holds, then the assertion in the proposition follows from 
the theorem on formal functions (\cite[Ch III, Theorem 11.1]{Har77}). 
We prove this claim by the induction on $\sum_{i=1}^n r_i$. 

If $\sum_{i=1}^n r_i=1$, that is, $Z=C_i$ holds for some $i$, then 
\[
h^1(C_i, L|_{C_i})=h^0(C_i, \omega_{C_i}\otimes L^{-1}|_{C_i})=0,
\]
where the second equation holds by the following calculation
\begin{align*}
\deg_{k(s)}(\omega_{C_i}\otimes L^{-1}|_{C_i})
&=(K_{X}+C_i-L)\cdot_{k(s)} C_i \\
&=(K_{X}+C_i-(K_X+\Delta+A))\cdot_{k(s)} C_i \\
&=((C_i-\Delta)-A)\cdot_{k(s)} C_i \\
&< 0.
\end{align*}
Thus we are done for the case where $\sum_{i=1}^n r_i=1$. 

We assume $\sum_{i=1}^n r_i>1$. 
For every $i$ with $r_i \geq 1$, 
we obtain the following short exact sequence:
\[
0 \to \MO_{C_i}\otimes \MO_X \big( -(Z-C_i) \big) \to \MO_Z \to \MO_{Z-C_i} \to 0.
\]
Thus, by the induction hypothesis, 
it is sufficient to find an index $j$ satisfying $r_j \geq 1$ and 
\[
h^1 \big( C_j, \MO_X(L-(Z-C_j))|_{C_j} \big) =0.
\]
Note that 
\[
h^1 \big( C_j, \MO_X(L-(Z-C_j))|_{C_j} \big) = h^0 \big( C_j, \omega_{C_j}\otimes \MO_X(-L+Z-C_j)|_{C_j} \big). 
\]
For every $i$, we see 
\begin{eqnarray*}
&&\deg_{k(s)} \big( \omega_{C_i}\otimes \MO_X(-L+Z-C_i)|_{C_i} \big)\\
&=&(K_{X}+C_i-L+Z-C_i)\cdot_{k(s)} C_i \\
&=&(K_{X}-(K_X+\Delta+A)+Z)\cdot_{k(s)} C_i \\
&<&(Z-\Delta)\cdot_{k(s)} C_i.
\end{eqnarray*}
Since $Z-\Delta \not\leq 0$, we can apply Lemma \ref{l-semi-negative} 
to $D = Z-\Delta$. 
Then, we can find an index $j$ satisfying $r_j \ge 1$ and 
\[
(Z-\Delta)\cdot C_j \leq 0. 
\]
This implies $h^1 \big( C_j, \MO_X(L-(Z-C_j))|_{C_j} \big) =0$, as desired. 
\end{proof}

\begin{thm}\label{t-rel-klt-kvv}
Let $B$ be a scheme satisfying Assumption~\ref{a-base}. 
Let $\pi:X \to S$ be a projective $B$-morphism 
from a two-dimensional quasi-projective klt pair $(X, \Delta)$ over $B$ 
to a quasi-projective $B$-scheme. 
Let $D$ be a $\Q$-Cartier $\Z$-divisor on $X$ 
such that $D-(K_{X/B}+\Delta)$ is $\pi$-nef and $\pi$-big. 
Assume that one of the following conditions holds.  
\begin{enumerate}
\item $\dim \pi(X) \geq 1$.  
\item $\dim \pi(X)=0$ and for the Stein factorisation $\pi:X \to T \to S$ of $\pi$, 
$T$ is the affine spectrum of a field of characteristic zero. 
\end{enumerate}
Then the equation $R^i\pi_*\MO_X(D)=0$ holds for every $i>0$. 
\end{thm}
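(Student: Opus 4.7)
The strategy is to reduce both cases to Proposition~\ref{p-rel-reg-kvv} via pullback along a log resolution $f:Y\to X$ of $(X,\Delta)$. After a small perturbation --- replacing $\Delta$ by $\Delta+\epsilon E$ for an effective $E$ with $D-(K_{X/B}+\Delta)\sim_\R A+E$ and $A$ $\pi$-ample (Remark~\ref{r-Kodaira-lemma}), and then rounding the coefficients to rationals --- I may assume $\Delta$ is a $\Q$-divisor, $(X,\Delta)$ stays klt, and $N:=D-(K_{X/B}+\Delta)$ is $\pi$-ample. Taking such an $f$ (Subsection~\ref{ss-log-resolution}), I set
\[L:=K_{Y/B}+\lceil f^*N\rceil,\qquad \Delta_Y:=\lceil f^*N\rceil-f^*N,\]
so that $L$ is a Cartier divisor on the regular surface $Y$, $0\le\Delta_Y<1$, and $L-(K_{Y/B}+\Delta_Y)=f^*N$.

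For case (1), we have $\dim(\pi\circ f)(Y)=\dim\pi(X)\ge 1$ and $f^*N$ is $(\pi\circ f)$-nef and $(\pi\circ f)$-big, being the pullback of a $\pi$-ample divisor by the birational $f$. Proposition~\ref{p-rel-reg-kvv} applied to $\pi\circ f:Y\to S$ then yields $R^i(\pi\circ f)_*\MO_Y(L)=0$ for all $i>0$. To descend to $X$, I would verify (a) $f_*\MO_Y(L)\simeq\MO_X(D)$ and (b) $R^jf_*\MO_Y(L)=0$ for $j>0$, whence the Leray spectral sequence gives $R^i\pi_*\MO_X(D)\simeq R^i(\pi\circ f)_*\MO_Y(L)=0$. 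For (a), the discrepancy formula $f^*(K_{X/B}+\Delta)=K_{Y/B}+f^{-1}_*\Delta-\sum_i a_iE_i$ with $a_i>-1$ (the klt hypothesis) together with a ceiling computation on disjoint supports shows $L=f^{-1}_*D+G$ for an integer $f$-exceptional divisor $G=\sum_i\lceil e_i+a_i\rceil E_i$, where $e_i$ is the coefficient of $E_i$ in $f^*D$; the key inequality $\lceil e_i+a_i\rceil\ge\lfloor e_i\rfloor$ (valid whenever $a_i>-1$) then forces any section of $\MO_X(D)$, which automatically satisfies $\mathrm{ord}_{E_i}(g)\ge -e_i$ after pullback, to lie in $f_*\MO_Y(L)$. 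For (b), the sheaves $R^jf_*\MO_Y(L)$ have finite support, so by the theorem on formal functions \cite[Ch.~III, Theorem~11.1]{Har77} it suffices to establish $H^1(Z,L|_Z)=0$ for every effective $\Z$-divisor $Z$ supported on the $f$-exceptional curves; this is proved by the same induction as in the proof of Proposition~\ref{p-rel-reg-kvv}, except that the intersection matrix of the $f$-exceptional locus is now \emph{strictly} negative definite (by \cite[Theorem~10.1]{Kol13}), so the analogue of Lemma~\ref{l-semi-negative} holds with a strict inequality and no $f$-bigness of $f^*N$ is required.

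For case (2), $\pi$ factors through $T=\Spec k$ with $\mathrm{char}\,k=0$, and proving $R^i\pi_*\MO_X(D)=0$ reduces to $H^i(X,\MO_X(D))=0$ for $i>0$. Base changing by the faithfully flat separable extension $\Spec\overline k\to\Spec k$, the pair $(X_{\overline k},\Delta_{\overline k})$ is a projective klt surface over $\overline k$ (klt is preserved since log resolutions descend under separable base change) with $D_{\overline k}-(K_{X_{\overline k}/\overline k}+\Delta_{\overline k})$ still nef and big, so classical Kawamata--Viehweg vanishing gives $H^i(X_{\overline k},D_{\overline k})=0$, which descends to $k$ by faithful flatness. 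The principal technical obstacle is item (b) of case (1): establishing $R^1f_*\MO_Y(L)=0$ in the excellent, positive-characteristic setting requires a careful transcription of the inductive argument of Proposition~\ref{p-rel-reg-kvv} to the strictly negative-definite setting, without invoking the $f$-bigness hypothesis that the proposition as stated would demand.
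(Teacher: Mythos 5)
Your argument is correct and follows essentially the same route as the paper: the published proof reduces both cases to [KMM87, Theorem~1-2-5] together with Proposition~\ref{p-rel-reg-kvv}, and your write-up is an explicit transcription of that reduction (log resolution, the divisors $L=K_{Y/B}+\lceil f^*N\rceil$ and $\Delta_Y=\lceil f^*N\rceil-f^*N$, then Leray). The only inessential deviation is in step (b), where you re-prove $R^1f_*\MO_Y(L)=0$ via a strict analogue of Lemma~\ref{l-semi-negative}; this is already supplied by the $\dim\pi(X)=2$ case of Proposition~\ref{p-rel-reg-kvv}, which simply invokes \cite[Theorem~10.4]{Kol13} for the birational situation, so no separate argument avoiding $f$-bigness is needed.
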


\begin{proof}
Replacing $\pi$ by its Stein factorisation, we may assume that $\pi_*\MO_X=\MO_S$. 
If (2) holds, then the assertion follows from \cite[Theorem 1-2-5]{KMM87} 
after taking the base change to the algebraic closure. 
If (1) holds, then we can apply the same proof as in \cite[Theorem 1-2-5]{KMM87} 
by using Proposition~\ref{p-rel-reg-kvv} instead of \cite[Theorem 1-2-3]{KMM87}. 
\end{proof}

\begin{thm}\label{t-rel-nadel}
Let $B$ be a scheme satisfying Assumption~\ref{a-base}. 
Let $\pi:X \to S$ be a projective $B$-morphism 
from a quasi-projective normal $B$-surface to a quasi-projective $B$-scheme. 
Let $\Delta$ be an $\R$-divisor on $X$ such that $K_{X/B}+\Delta$ is $\R$-Cartier 
and let $L$ be a Cartier divisor on $X$ 
such that $L-(K_{X/B}+\Delta)$ is $\pi$-nef and $\pi$-big. 
Assume that one of the following conditions holds.  
\begin{enumerate}
\item $\dim \pi(X) \geq 1$.  
\item $\dim \pi(X)=0$ and for the Stein factorisation $\pi:X \to T \to S$ of $\pi$, 
$T$ is the affine spectrum of a field of characteristic zero. 
\end{enumerate}
Then the equation 
$R^i\pi_*(\MO_X(L) \otimes_{\MO_X} \mathcal J_{\Delta})=0$ holds for every $i>0$. 
\end{thm}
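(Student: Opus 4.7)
The plan is to reduce the possibly singular pair $(X,\Delta)$ to the regular-surface situation governed by Proposition~\ref{p-rel-reg-kvv} (and Theorem~\ref{t-rel-klt-kvv} in case~(2)) by passing to a log resolution and combining two vanishing statements via the Leray spectral sequence.

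First I would take a log resolution $\mu\colon V\to X$ of $(X,\Delta)$, whose existence is granted by \cite{Lip78}. Set
\[
A:=\mu^*(K_{X/B}+\Delta)-K_{V/B},\qquad L_V:=\mu^*L-\lfloor A\rfloor=\mu^*L+\lceil K_{V/B}-\mu^*(K_{X/B}+\Delta)\rceil.
\]
Because $V$ is regular, $L_V$ is Cartier, and the projection formula combined with the very definition of the multiplier ideal gives
\[
\mu_*\MO_V(L_V)=\MO_X(L)\otimes_{\MO_X}\mathcal J_\Delta.
\]
Put $\Delta_V:=\{A\}=A-\lfloor A\rfloor$. Since $\mu$ is a log resolution, $\Delta_V$ has simple normal crossing support and coefficients in $[0,1)$, so $(V,\Delta_V)$ is a regular klt pair. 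A direct manipulation gives
\[
L_V-(K_{V/B}+\Delta_V)=\mu^*\bigl(L-(K_{X/B}+\Delta)\bigr),
\]
which is $\mu$-numerically trivial (hence $\mu$-nef) and, by pullback, remains $(\pi\circ\mu)$-nef and $(\pi\circ\mu)$-big.

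Next I would apply Proposition~\ref{p-rel-reg-kvv} to the birational morphism $\mu\colon V\to X$ (where $\dim \mu(V)=2$) to conclude $R^i\mu_*\MO_V(L_V)=0$ for all $i>0$. The Leray spectral sequence for $\pi\circ\mu$ then collapses to give
\[
R^i\pi_*\bigl(\MO_X(L)\otimes_{\MO_X}\mathcal J_\Delta\bigr)=R^i(\pi\circ\mu)_*\MO_V(L_V)\qquad(i\ge 0).
\]
Because $(\pi\circ\mu)(V)=\pi(X)$ and the Stein factorisation of $\pi\circ\mu$ factors through that of $\pi$, whichever of hypotheses (1) or (2) holds for $\pi$ also holds for $\pi\circ\mu$. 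Applying Proposition~\ref{p-rel-reg-kvv} to the regular surface $V$ and the divisor $L_V$ with boundary $\Delta_V$ in case (1), or Theorem~\ref{t-rel-klt-kvv} applied to the klt pair $(V,\Delta_V)$ in case (2), yields the desired vanishing of the right-hand side for all $i>0$.

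The most delicate point is the first vanishing $R^i\mu_*\MO_V(L_V)=0$: one must verify that $\mu^*(L-(K_{X/B}+\Delta))$, which is only $\mu$-numerically trivial, fits the $\mu$-nef and $\mu$-big framework of Proposition~\ref{p-rel-reg-kvv} along the birational contraction $\mu$. This is where the negative-definiteness of the exceptional intersection matrix does the work, exactly as in \cite[Theorem 10.4]{Kol13}; once this is granted, everything else is routine bookkeeping with fractional parts, the simple normal crossing hypothesis and the Leray spectral sequence.
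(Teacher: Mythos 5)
Your proposal is correct and follows the same route the paper intends: the paper outsources the proof to the argument of \cite[Theorem 2.10]{Tan15}, which is exactly your log-resolution-plus-Leray scheme. The steps check out: the projection formula identity $\mu_*\MO_V(L_V)=\MO_X(L)\otimes\mathcal J_\Delta$, the identity $L_V-(K_{V/B}+\Delta_V)=\mu^*(L-(K_{X/B}+\Delta))$, and the degeneration of the Leray spectral sequence once both higher direct images vanish.

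One small remark on the point you flag as delicate. For the vanishing $R^i\mu_*\MO_V(L_V)=0$ you invoke Proposition~\ref{p-rel-reg-kvv}, but $\mu^*(L-(K_{X/B}+\Delta))$ is only $\mu$-numerically trivial, and with the paper's definition of $f$-big (ample plus effective) it is not immediately clear that a $\mu$-trivial divisor is $\mu$-big. The cleanest fix is to note that one can first perturb $\Delta_V$ by a small effective $\mu$-antiample exceptional divisor $\epsilon F$, which keeps $\Delta_V+\epsilon F<1$ with snc support and makes $L_V-(K_{V/B}+\Delta_V+\epsilon F)$ genuinely $\mu$-ample, after which Proposition~\ref{p-rel-reg-kvv} (or directly \cite[Theorem 10.4]{Kol13}) applies. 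With that emendation the argument is complete, and for case~(2) your appeal to Theorem~\ref{t-rel-klt-kvv} for the klt pair $(V,\Delta_V)$ over the characteristic-zero field $T$ is the right move.
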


\begin{proof}
We can apply the same argument as in \cite[Theorem 2.10]{Tan15} 
by using Theorem~\ref{t-rel-klt-kvv} instead of \cite[Corollary 2.7]{Tan15}. 
\end{proof}

\subsection{Global case of positive characteristic}

In this subsection, 
we establish a vanishing theorem of Kawamata--Viehweg type 
for surfaces of positive characteristic (Theorem~\ref{t-glob-klt-kvv}). 
The heart of this subsection is Proposition~\ref{p-F-vanishing}. 
For this, we need some auxiliary results: 
Lemma \ref{l-R-Fujita} and Lemma \ref{l-top}. 
Our strategy is similar to but modified from \cite[Section 2]{Tan15}.

\begin{lem}\label{l-R-Fujita}
Let $k$ be a field. 
Let $X$ be a projective normal variety over $k$. 
Let $M$ be a coherent sheaf on $X$ and 
let $A$ be an ample $\R$-Cartier $\R$-divisor on $X$. 
Then there exists a positive integer $r(M, A)$ such that 
$$H^i(X, M\otimes_{\MO_X} \MO_X(rA+N))=0$$ 
for any $i>0$, any real number $r \geq r(M, A)$ and 
any nef $\R$-Cartier $\R$-divisor $N$ such that $rA+N$ is Cartier. 
\end{lem}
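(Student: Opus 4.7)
The plan is to reduce the statement to the classical Fujita vanishing theorem: for any coherent sheaf $\mathcal F$ on a projective scheme $X$ over a field $k$ and any ample Cartier divisor $H$, there exists an integer $m_0(\mathcal F, H)$ such that $H^i(X, \mathcal F \otimes \MO_X(mH + D)) = 0$ for all $i > 0$, all integers $m \geq m_0$, and all nef Cartier divisors $D$. First I would fix once and for all an arbitrary ample Cartier divisor $H$ on $X$, apply this classical result with $\mathcal F := M$, and obtain the corresponding integer $m_0$.

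The remaining task is to rewrite $rA + N$, whenever this $\R$-divisor happens to be Cartier, in the form $mH + P$ with $m \in \Z_{\geq m_0}$ and $P$ a nef Cartier divisor. Since $A$ is $\R$-ample and the ample cone is open inside the space of $\R$-Cartier $\R$-divisors (as used in the proof of Remark~\ref{r-fibrewise-ample}), one can choose a positive real number $c$ with $A - cH$ still $\R$-ample. Define $r(M, A) := \lceil m_0/c \rceil$. Given a real $r \geq r(M, A)$ and a nef $\R$-Cartier $\R$-divisor $N$ such that $L := rA + N$ is Cartier, set $m := \lfloor rc \rfloor$ and $P := L - mH$. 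Then $r \geq m_0/c$ gives $m \geq m_0$, and $P$ is Cartier as a $\Z$-linear combination of the Cartier divisors $L$ and $H$. A direct rewriting gives
$$P = (rc - m)H + r(A - cH) + N,$$
which displays $P$ as a sum of a nonnegative multiple of the ample divisor $H$, the ample divisor $r(A - cH)$ (using $r > 0$), and the nef divisor $N$. Hence $P$ is nef.

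Applying Fujita vanishing to $(M, H)$ with the nef Cartier divisor $P$ then yields $H^i(X, M \otimes \MO_X(L)) = H^i(X, M \otimes \MO_X(mH + P)) = 0$ for every $i > 0$, which is the desired conclusion. The only genuinely nontrivial ingredient is the classical Fujita vanishing theorem itself; the passage from $\Z$- to $\R$-coefficients is accomplished simply by absorbing a large integer multiple of the fixed ample Cartier divisor $H$ into the nef Cartier part, so I do not anticipate any serious obstacle beyond invoking Fujita.
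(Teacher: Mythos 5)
Your proposal is correct, and it follows essentially the same route as the paper: the paper simply refers to \cite[Theorem 2.2]{Tan15}, which is also a reduction to the Fujita vanishing theorem \cite[Theorem 1.5]{Kee03}; your argument supplies the explicit absorption of $rA+N$ into the form $mH+P$ with $m\geq m_0$ and $P$ nef Cartier that the reference leaves implicit. The only points worth flagging are bookkeeping ones that you handle correctly: $P = L - mH$ is Cartier because $m=\lfloor rc\rfloor\in\Z$ and $L,H$ are Cartier, and the identity $P=(rc-m)H+r(A-cH)+N$ exhibits $P$ as a nonnegative combination of nef $\R$-divisors, hence nef.
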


\begin{proof}
The assertion follows from the same argument as in \cite[Theorem 2.2]{Tan15}. 
For the Fujita vanishing theorem in our setting, 
we refer to \cite[Theorem 1.5]{Kee03}. 
\end{proof}

\begin{lem}\label{l-top}
Let $k$ be a field. 
Let $X$ be a projective normal variety over $k$. 
Let $M$ be a coherent sheaf on $X$ and 
let $N$ be a nef $\R$-Cartier $\R$-divisor on $X$ with $N\not\equiv 0$. 
Then there exists a positive integer $r_0$ such that 
$$H^{\dim X}(X, M\otimes_{\MO_X} \MO_X(rN+N'))=0$$ 
for any real number $r \geq r_0$ and any nef $\R$-Cartier $\R$-divisor $N'$ 
such that $rN+N'$ is Cartier. 
\end{lem}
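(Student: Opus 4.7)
The plan is to reduce the statement to a vanishing for a single line bundle by resolving $M$ with negative twists of a very ample divisor, then convert the resulting top-degree cohomology into an $H^0$ via Serre duality, and finally extract an upper bound on $r$ from an intersection-theoretic inequality applied to the effective Weil divisor a non-zero section would produce.

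Set $n=\dim X$ and fix a very ample divisor $H$ on $X$. By Serre's theorem there exist $m,a\in\Z_{>0}$ and a surjection $\MO_X(-mH)^{\oplus a}\twoheadrightarrow M$. Tensoring with the invertible sheaf $\MO_X(rN+N')$ is exact, and combined with Grothendieck vanishing $H^{n+1}(X,-)=0$ one obtains a surjection
$$H^n(X,\MO_X(-mH+rN+N'))^{\oplus a}\twoheadrightarrow H^n(X,M\otimes\MO_X(rN+N')),$$
reducing the problem to the case $M=\MO_X(-mH)$. Since $X$ is projective and normal, the dualising sheaf is the reflexive sheaf $\omega_X\simeq\MO_X(K_X)$, and Serre duality in top degree yields
$$H^n(X,\MO_X(-mH+rN+N'))^{\vee}\simeq H^0(X,\omega_X\otimes\MO_X(mH-rN-N')).$$
Any non-zero section of the right-hand side corresponds to an effective Weil divisor $E$ with $E\sim K_X+mH-rN-N'$. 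Because $H$ is very ample and $E$ is effective, $E\cdot H^{n-1}\geq 0$, so
$$0\le K_X\cdot H^{n-1}+mH^n-r(N\cdot H^{n-1})-N'\cdot H^{n-1}.$$
Using $N'\cdot H^{n-1}\geq 0$ (nefness of $N'$) and writing $\nu:=N\cdot H^{n-1}$, this forces $r\leq (K_X\cdot H^{n-1}+mH^n)/\nu$ whenever $\nu>0$. Taking $r_0$ to be any integer strictly larger than this (finite, $N'$-independent) ratio then gives the required vanishing.

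The main obstacle is the key positivity $\nu=N\cdot H^{n-1}>0$. For the surface case $n=2$, which is all the paper actually needs, this is immediate from the Hodge index theorem: if $N\cdot H=0$ then $N^2\le 0$, and combined with $N^2\ge 0$ from nefness this forces $N\equiv 0$, contrary to hypothesis. In higher dimension one argues by induction on $n$: for sufficiently positive $H$ a general section $Y\in|H|$ is a normal projective variety of dimension $n-1$ on which $N|_Y$ is nef, and one selects such a $Y$ with $N|_Y\not\equiv 0$, an assertion which follows because if $N|_Y\equiv 0$ for every $Y\in|H|$ then a standard covering argument forces $N\equiv 0$ on $X$; the inductive hypothesis applied to $(Y,N|_Y,H|_Y)$ then yields $N\cdot H^{n-1}=(N|_Y)\cdot (H|_Y)^{n-2}>0$.
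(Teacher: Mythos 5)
Your route is genuinely different from the paper's. The paper iterates: it restricts to a sufficiently ample normal hyperplane section, uses Fujita vanishing to kill the middle term of the resulting long exact sequence, and repeats until $\dim X=1$, where $N$ becomes ample and Lemma~\ref{l-R-Fujita} applies. You instead reduce $M$ to a negative twist of a very ample divisor, dualise the top cohomology into an $H^0$ of a rank-one reflexive sheaf, and extract an explicit bound on $r$ from the non-negativity of $E\cdot H^{n-1}$ for an effective Weil divisor $E$. This is clean, and as a small bonus it produces an effective $r_0$ in terms of $K_X\cdot H^{n-1}$, $mH^n$ and $N\cdot H^{n-1}$, which the paper's argument does not. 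Both proofs ultimately rest on the positivity $N\cdot H^{\dim X-1}>0$ for nef $N\not\equiv 0$ and ample $H$ (a standard consequence of the Hodge index theorem, or of Kleiman's criterion); the paper uses it implicitly when it asserts that $N$ restricted to a complete-intersection curve is ample, while you make it explicit.

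Your proof of this positivity for $\dim X\geq 3$ has a gap, however. You want to select a \emph{normal} hyperplane section $Y$ with $N|_Y\not\equiv 0$, and argue that such a $Y$ exists because otherwise a covering argument would give $N\equiv 0$. But under the assumption $N\cdot H^{n-1}=0$, the inductive hypothesis (applicable only to normal $Y$) combined with $(N|_Y)\cdot(H|_Y)^{n-2}=N\cdot H^{n-1}=0$ shows that \emph{every} general (hence normal) $Y\in|H|$ already has $N|_Y\equiv 0$; there simply is no normal $Y$ on which $N$ restricts nontrivially. The covering argument, on the other hand, requires applying $N|_Y\equiv 0$ to a member $Y\in|mH|$ chosen to contain a curve $C$ with $N\cdot C>0$ — but that $Y$ is not general and need not be normal, so you are not entitled to the conclusion $N|_Y\equiv 0$ there. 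As written, the induction and the covering argument pass each other without meeting.

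The fact is true and the gap is repairable — for instance, carry out the induction over integral (not necessarily normal) projective varieties, passing to the normalisation for the Hodge-index base case and using the projection formula to descend numerical triviality; or take $Y\in|mH|$ through $C$ and integral by Bertini, and apply the inductive contrapositive to $Y$ directly. Alternatively one can simply quote Kleiman's criterion. But the step needs to be rewritten, since the specific argument you offer does not close. Note also that the remark ``which is all the paper actually needs'' is not quite accurate: Proposition~\ref{p-F-vanishing} invokes this lemma for projective normal varieties of arbitrary dimension.
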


\begin{proof}
We may assume that $k$ is an infinite field by taking the base change 
to its algebraic closure if $k$ is a finite field. 
Since $X$ is projective, there exists a surjection $\bigoplus_{j=1}^{\ell} M_j \to M$ where 
each $M_j$ is an invertible sheaf. 
Thus, we may assume that $M$ is an invertible sheaf. 
Taking a sufficiently ample hyperplane section $H$ of $X$, 
where $H$ is a projective normal variety (\cite[Theorem 7']{Sei50}), 
we obtain 
{\small $$0 \to \MO_X(M+N'') \to \MO_X(H+M+N'') \to \MO_X(H+M+N'')|_H \to 0.$$}
for any nef Cartier divisor $N''$ on $X$. 
By the Fujita vanishing theorem (\cite[Theorem 1.5]{Kee03}), we see 
$H^i(X, H+M+N'')=0$ 
for any $i>0$ and nef Cartier divisor $N''$. 
Thus, it follows that  
\begin{eqnarray*}
&&H^{\dim X}(X, M+N'')\\
&\simeq& H^{\dim X-1}(H, \MO_X(H+M+N'')|_H)\\
&=&H^{\dim H}(H, \MO_X(H+M+N'')|_H)
\end{eqnarray*}
for any nef Cartier divisor $N''$ on $X$. 
Repeating this process for $N'':=rN+N'$, 
we may assume $\dim X=1$. 
Then $N$ is ample. 
The assertion follows from Lemma \ref{l-R-Fujita}. 
\end{proof}

In the following proposition, we use the notion of $F$-singularities. 
For definitions and basic properties, we refer to 
\cite[Section 2.3]{CTX15} (cf. \cite{Sch09}). 
Although \cite{CTX15} works over an algebraically closed field of positive characteristic, 
the same argument works over $F$-finite fields.

\begin{prop}\label{p-F-vanishing}
Let $k$ be an $F$-finite field of characteristic $p>0$. 
Let $X$ be a projective normal variety over $k$. 
Assume that one of the following two conditions holds. 
\begin{enumerate}
\item{$(X, \Delta)$ is sharply $F$-pure where $\Delta$ is an effective $\Q$-divisor on $X$ 
such that $(p^{e_1}-1)(K_X+\Delta)$ is Cartier for some $e_1\in\mathbb Z_{>0}$. }
\item{$(X, \Delta)$ is strongly $F$-regular, 
where $\Delta$ is an effective $\R$-divisor on $X$ 
such that $(X, \Delta)$ is $\R$-Cartier.}
\end{enumerate}
Let $L$ be a Cartier divisor on $X$ such that $L-(K_X+\Delta)$ is ample, 
and let $N$ be a nef $\R$-Cartier $\R$-divisor on $X$ with $N\not\equiv 0$. 
Then there exists a positive integer $r_0$ such that 
the equation 
$$H^{i}(X, L+rN+N')=0$$
holds for any $i\geq \dim X-1$, 
real number $r \geq r_0$ and nef $\R$-Cartier $\R$-divisor such that 
$rN+N'$ is Cartier. 
\end{prop}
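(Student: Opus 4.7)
The plan is to combine the Frobenius splitting guaranteed by the $F$-singularity hypothesis with the two vanishing theorems already in hand (Lemma~\ref{l-top} for top cohomology and Lemma~\ref{l-R-Fujita} for Fujita-type vanishing). The top-degree case $i=\dim X$ is immediate and uses no $F$-singularity input: apply Lemma~\ref{l-top} to the invertible sheaf $M=\MO_X(L)$, the given nef $N$, and $N''=N'$, producing a threshold $r_0$ beyond which $H^{\dim X}(X, L+rN+N')=0$.

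For the subcritical case $i=\dim X-1$, set $A_0:=L-(K_X+\Delta)$, an ample $\R$-Cartier $\R$-divisor. In case (1) the sharp $F$-purity hypothesis gives, for every $e$ divisible by $e_1$, a splitting of $\MO_X\to F^e_*\MO_X(\lceil(p^e-1)\Delta\rceil)$ as $\MO_X$-modules; in case (2) strong $F$-regularity gives an analogous splitting for all sufficiently large $e$, with the extra freedom of absorbing any prescribed effective Cartier divisor into the right-hand sheaf. Tensoring by $\MO_X(L+rN+N')$, applying the projection formula, and using that $F^e$ is a finite (hence affine) morphism, I obtain an injection
\[
H^{i}(X, L+rN+N')\hookrightarrow H^{i}\bigl(X, \MO_X(\lceil(p^e-1)\Delta\rceil)\otimes\MO_X(p^e(L+rN+N'))\bigr).
\]
Using $L=(K_X+\Delta)+A_0$, the Cartier class $p^e(L+rN+N')$ decomposes modulo $\R$-linear equivalence as $(p^e-1)(K_X+\Delta)+(K_X+\Delta)+p^eA_0+p^e(rN+N')$; for $e_1\mid e$ the piece $(p^e-1)(K_X+\Delta)$ is Cartier, and $p^eA_0$ is an ample class whose positivity grows with $e$.

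To conclude, I then apply Fujita vanishing (Lemma~\ref{l-R-Fujita}) to the coherent sheaf $M_e:=\MO_X(\lceil(p^e-1)\Delta\rceil)\otimes\MO_X((p^e-1)(K_X+\Delta))$ with ample $A_0$ and nef perturbation $p^e(rN+N')+(p^e-1)A_0$; provided $p^e\ge r(M_e,A_0)$, this yields the vanishing of $H^{i}$ for all $i>0$, in particular for $i=\dim X-1$. The main obstacle is the coupled $e$-dependence: the sheaf $M_e$ varies with $e$ through the ceiling $\lceil(p^e-1)\Delta\rceil$, so the Fujita bound $r(M_e,A_0)$ may grow with $e$ and must be controlled to ensure $p^e$ outpaces it. In case (1) the control is forced to come from a careful analysis of how the fractional part $\lceil(p^e-1)\Delta\rceil-(p^e-1)\Delta$ scales; in case (2) a cleaner route is available, namely exploiting the asymptotic strong $F$-regularity to absorb a sufficiently ample Cartier divisor directly into the splitting, which decouples the Fujita estimate from $e$ and makes the conclusion uniform in $r$ and $N'$.
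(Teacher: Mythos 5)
Your reduction to $i=\dim X-1$ via Lemma~\ref{l-top} matches the paper, and your use of the Frobenius structure is dual to the paper's (you use the split injection $\MO_X\hookrightarrow F^e_*\MO_X(\lceil(p^e-1)\Delta\rceil)$, the paper uses the trace surjection $F^e_*\MO_X(-(p^e-1)(K_X+\Delta))\twoheadrightarrow\MO_X$ with kernel $B_e$). But the obstacle you flag at the end --- the coupled $e$-dependence in the sheaf $M_e$ --- is a genuine gap, not a technicality, and your proposal stops exactly where the real work begins. You only ever see the one cohomology group $H^{\dim X-1}(X,\lceil(p^e-1)\Delta\rceil+p^e(L+rN+N'))$, and the only Fujita-type estimate available to you has a threshold $r(M_e,A_0)$ whose growth in $e$ is uncontrolled; there is no reason $p^e$ outpaces it, since $\lceil(p^e-1)\Delta\rceil+(p^e-1)(K_X+\Delta)\approx(p^e-1)(K_X+2\Delta)$ scales linearly in $p^e$ and $K_X+2\Delta$ may be very negative.

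The paper sidesteps this by not taking the split injection at all: it feeds the full short exact sequence $0\to B_e\to F^e_*\MO_X(-(p^e-1)(K_X+\Delta))\to\MO_X\to 0$ (tensored with $\MO_X(L+N'')$) into the long exact sequence, which produces \emph{two} terms to kill. The first, $H^{\dim X-1}(X, p^eN''+L+(p^e-1)A_0)$ with $A_0:=L-(K_X+\Delta)$, involves the \emph{fixed} invertible sheaf $\MO_X(L)$ twisted by a multiple of the fixed ample $A_0$, so Lemma~\ref{l-R-Fujita} applies with a threshold $r(\MO_X(L),A_0)$ that is independent of $e$; one then picks $e_2\in e_1\Z_{>0}$ once and for all with $p^{e_2}-1\geq r(\MO_X(L),A_0)$. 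The second term is $H^{\dim X}(X, B_{e_2}\otimes\MO_X(L+rN+N'))$, and since $e_2$ is now fixed, $B_{e_2}$ is a fixed coherent sheaf and Lemma~\ref{l-top} gives $r_0$. The point is an order-of-quantifiers trick: the cokernel isolates a Fujita-friendly term with a constant sheaf so that $e$ can be fixed first, and the kernel $B_e$, which carries all the $e$-dependence, only ever needs to be treated at the single value $e=e_2$. To repair your argument you would essentially have to rediscover this decomposition; the asymptotic analysis of the fractional part that you gesture at does not by itself give a bound of the required shape, nor does reducing (2) to (1) via \cite[Lemma 4.1]{CTX15} (which the paper does, but only to normalise the hypotheses, not to decouple the estimates).
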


\begin{proof}
By Lemma~\ref{l-top}, we may assume $i=\dim X-1$. 
Thanks to \cite[Lemma 4.1]{CTX15},  
we may assume that (1) holds. 
It follows from (1) that we get an exact sequence (cf. \cite[Definition 2.7]{CTX15}): 
$$0\to B_e \to F_*^e(\MO_X(-(p^e-1)(K_X+\Delta))) \to \MO_X \to 0,$$
where $B_e$ is a coherent sheaf depending on $e\in e_1\mathbb Z_{>0}$. 
For any nef Cartier divisor $N''$, we obtain the exact sequence: 
\begin{eqnarray*}
&&H^{\dim X-1}(X, p^e(L+N'')-(p^e-1)(K_X+\Delta)) \\
&\to& H^{\dim X-1}(X, L+N'') \\
&\to& H^{\dim X}(X, B_e\otimes_{\MO_X} \MO_X(L+N'')).
\end{eqnarray*}
By the Fujita vanishing theorem (\cite[Theorem 1.5]{Kee03}), 
we can find $e_2\in e_1\mathbb Z_{>0}$ such that 
\begin{eqnarray*}
&&H^{\dim X-1}(X, p^{e_2}(L+N'')-(p^{e_2}-1)(K_X+\Delta))\\
&=&H^{\dim X-1}(X, p^{e_2}N''+L+(p^{e_2}-1)(L-(K_X+\Delta)))\\
&=&0
\end{eqnarray*}
for every nef Cartier divisor $N''$. 
By Lemma~\ref{l-top}, 
we can find an integer $r_0>0$, depending on $e_2$, such that 
$$H^{\dim X}(X, B_{e_2}\otimes_{\MO_X} \MO_X(L+rN+N'))=0$$
for every $r \geq r_0$ and nef $\R$-Cartier $\R$-divisor such that $rN+N'$ is Cartier. 
We are done by substituting $rN+N'$ for $N''$. 
\end{proof}


\begin{thm}\label{t-glob-klt-kvv}
Let $k$ be a field of characteristic $p>0$. 
Let $(X, \Delta)$ be a two-dimensional projective klt pair over $k$. 
Let $D$ be a $\Q$-Cartier $\Z$-divisor on $X$ 
such that $D-(K_X+\Delta)$ is nef and big. 
Let $N$ and $M_1, \cdots, M_q$ be nef $\R$-Cartier $\R$-divisors on $X$ 
with $N\not\equiv 0$. 
Then there exists a positive integer $r_0$ such that 
$$H^{i}(X, L+rN+\sum_{j=1}^qs_jM_j)=0$$
for any $i>0$ and real numbers $r, s_1, \cdots, s_q$ 
such that $r \geq r_0$, $s_j \geq 0$ and $rN+\sum_{j=1}^qs_jM_j$ is Cartier. 
\end{thm}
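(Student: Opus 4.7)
The strategy, as sketched in Subsection~\ref{ss-sketch}, is to pass to a log resolution $\mu\colon Y\to X$ where the transformed boundary is log smooth klt and therefore strongly $F$-regular over an $F$-finite field, apply Proposition~\ref{p-F-vanishing} on $Y$, and then descend to $X$ via the relative Kawamata--Viehweg vanishing (Theorem~\ref{t-rel-klt-kvv}) together with the Leray spectral sequence. Writing $L := D + rN + \sum_j s_j M_j$, the aim is to realise $L$ as $\mu_* L_Y$ for some Cartier divisor $L_Y$ on $Y$ whose higher cohomology on $Y$ vanishes and whose higher direct images under $\mu$ vanish.

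I would begin with two preliminary reductions. By Remark~\ref{r-Kodaira-lemma} applied to the nef and big divisor $D-(K_X+\Delta)$, one may write $D-(K_X+\Delta) = A_\epsilon + \epsilon E$ with $A_\epsilon$ ample and $E$ effective, and replacing $\Delta$ by $\Delta + \epsilon E$ for small $\epsilon>0$ preserves klt while making $D-(K_X+\Delta)$ ample. Secondly, the finitely many data $(X, \Delta, D, N, M_j)$ all descend to some finitely generated subfield $k_0 \subset k$, which is automatically $F$-finite, and flatness of $\Spec k \to \Spec k_0$ preserves cohomological vanishing together with klt singularities, nefness, bigness and the condition $N\not\equiv 0$; hence I may assume that $k$ itself is $F$-finite.

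Now take a log resolution $\mu\colon Y \to X$ of $(X, \Delta)$ and write
$$K_Y + \tilde\Delta + F^+ - F^- = \mu^*(K_X+\Delta),$$
where $\tilde\Delta = \mu_*^{-1}\Delta$ and $F^\pm$ are effective $\mu$-exceptional $\R$-divisors of disjoint support; klt forces the coefficients of $\tilde\Delta + F^+$ into $[0,1)$. By rounding the fractional parts of $\mu^*D$ and of $F^-$ supported on $\Ex(\mu)$, I construct a Cartier $\Z$-divisor $L_Y$ on $Y$ and an effective simple-normal-crossings $\R$-divisor $\Delta_Y$ with coefficients in $[0,1)$ such that $\mu_*L_Y = D$ as Weil divisors, such that $L_Y-\mu^*D$ is effective and $\mu$-exceptional, and such that $L_Y - (K_Y+\Delta_Y) - \mu^*(D-(K_X+\Delta))$ is an effective exceptional $\R$-divisor. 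A further Kodaira-type perturbation of $\Delta_Y$, absorbed while keeping its coefficients below $1$, ensures that $L_Y - (K_Y + \Delta_Y)$ is ample on $Y$. Since $(Y, \Delta_Y)$ is then klt with snc boundary on a regular surface over an $F$-finite field, it is strongly $F$-regular, and Proposition~\ref{p-F-vanishing} applied to $L_Y$ with the nef divisor $\mu^*N$ (which is nonzero in $N(Y)_\R$ since $\mu$ is birational) and the nef divisors $\mu^*M_j$ produces an integer $r_0$ with
$$H^i\bigl(Y,\, L_Y + r\mu^*N + \textstyle\sum_j s_j\mu^*M_j\bigr) = 0$$
for all $i \geq 1$ and all $r \geq r_0$.

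To transfer this vanishing back to $X$, observe that
$$L_Y + r\mu^*N + \textstyle\sum_j s_j \mu^*M_j - (K_Y+\Delta_Y) = \bigl(L_Y - (K_Y+\Delta_Y)\bigr) + \mu^*\bigl(rN + \sum_j s_j M_j\bigr)$$
is ample plus a $\mu$-numerically-trivial pullback, hence $\mu$-ample, and in particular $\mu$-nef and $\mu$-big. Theorem~\ref{t-rel-klt-kvv} then gives $R^q\mu_* \MO_Y(L_Y + r\mu^*N + \sum_j s_j\mu^*M_j) = 0$ for every $q>0$, while the projection formula combined with the effectivity and $\mu$-exceptionality of $L_Y - \mu^*D$ identifies $\mu_*$ of the same sheaf with $\MO_X(L)$. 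The Leray spectral sequence then yields $H^i(X, L) \simeq H^i(Y, L_Y + \cdots) = 0$ for every $i>0$. I expect the main technical obstacle to lie in step three: because $D$ is only $\Q$-Cartier, $\mu^*D$ carries fractional exceptional coefficients that must be rounded consistently with keeping $\Delta_Y$ klt and $L_Y - (K_Y+\Delta_Y)$ ample, so this simultaneous bookkeeping---together with the $F$-finite descent of step two---is the technical heart of the argument.
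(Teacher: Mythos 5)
Your overall strategy matches the paper's proof exactly: reduce to an $F$-finite base field by spreading out over a finitely generated subfield, pass to a log resolution $\mu\colon Y\to X$ where the boundary becomes simple normal crossing (hence strongly $F$-regular), invoke Proposition~\ref{p-F-vanishing} on $Y$, and descend to $X$ via Theorem~\ref{t-rel-klt-kvv} together with Leray. The paper organises this as three nested cases (3)$\Rightarrow$(2)$\Rightarrow$(1) and, for the resolution step (3)$\Rightarrow$(2), simply cites the argument of \cite[Theorem~1-2-5]{KMM87}; you attempt to spell that step out, and this is where a genuine gap appears.

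The problem is in your choice of $L_Y$. You require simultaneously that $L_Y-\mu^*D$ be \emph{effective} and $\mu$-exceptional, that $\Delta_Y$ have coefficients in $[0,1)$, and that $L_Y-(K_Y+\Delta_Y)-\mu^*A$ (with $A=D-(K_X+\Delta)$) be effective and exceptional, and then that $L_Y-(K_Y+\Delta_Y)$ be made ample by a small perturbation. These conditions cannot in general be reconciled. If $L_Y-(K_Y+\Delta_Y)=\mu^*A+F$ with $F\neq 0$ effective exceptional, then $\mu^*A+F$ is \emph{not} nef (intersect with a component of $F$: $\mu^*A$ contributes $0$ and the negative-definiteness of the exceptional intersection matrix makes the total negative), and no perturbation of $\Delta_Y$ that keeps coefficients below $1$ repairs this. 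Forcing $F=0$ instead pins down $\Delta_Y$ on each $E_i$ to the single residue class determined by $(\mu^*D)_{E_i}\pmod\Z$ together with the discrepancy $a_i$, and this value can fall outside $[0,1)$: for the cone over a rational normal curve of degree $n\geq 3$ with $\Delta=0$ and $D$ a ruling through the vertex, one finds $a_E=-1+2/n$ and $(\mu^*D)_E=1/n$, so your would-be coefficient is $-a_E+\{-1/n\}=1/n+(1-1/n)\geq 1$. The standard X-method fix, which is what \cite[Theorem~1-2-5]{KMM87} actually does, is to take $L_Y:=K_Y+\ulcorner\mu^*A\urcorner$ and $\Delta_Y:=\ulcorner\mu^*A\urcorner-\mu^*A$, so that $L_Y-(K_Y+\Delta_Y)=\mu^*A$ on the nose; then $L_Y-\mu^*D$ is \emph{not} effective, but its coefficients are all $>-1$, and the identity $\mu_*\MO_Y(L_Y)=\MO_X(D)$ follows anyway because $\operatorname{div}(f)+L_Y$ is a $\Z$-divisor and hence nonnegative once it is $>-1$. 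With this correction, the rest of your argument (including the Kodaira perturbation on $X$ before resolving, and the $F$-finite descent) goes through and agrees with the paper.
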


\begin{proof}
We treat the following three cases separately. 
\begin{enumerate}
\item $k$ is $F$-finite, $X$ is regular and $\Delta$ is simple normal crossing. 
\item $X$ is regular and $\Delta$ is simple normal crossing. 
\item The case without any additional assumptions. 
\end{enumerate}

For the case (1), the assertion in the theorem follows 
from Proposition~\ref{p-F-vanishing} 
and the fact that $(X, \Delta)$ is strongly $F$-regular 
(cf. \cite[Proposition 6.18]{ST14}). 

Assume that the conditions in (2) hold. 
Take models over a finitely generated field over $\mathbb F_p$, that is, 
we can find an intermediate field $\mathbb F_p\subset k_0\subset k$ 
with $k_0$ finitely generated over $\mathbb F_p$ 
and $X_0, \Delta_0, D_0, N_0, M_{j, 0}$ over $k_0$ whose base changes and pull-backs 
are $X, \Delta, D, N, M_j$ respectively, and these satisfy the same properties 
as in the statement. 
Since $k_0$ is $F$-finite, it follows from the case (1) that 
$$\dim_k H^i(X, D+rN+\sum_{j=1}^qs_jM_j)=
\dim_{k_0} H^i(X_0, D_0+rN_0+\sum_{j=1}^qs_jM_{j, 0})=0$$
for numbers $i, r, s_j$ as in the statement. 
Thus the assertion in the theorem follows for the case (2). 

For the case (3), 
we can apply the same proof as in \cite[Theorem 1-2-5]{KMM87} 
by using the case (2) instead of \cite[Theorem 1-2-3]{KMM87}. 
\end{proof}


\begin{thm}\label{t-glob-nadel}
Let $k$ be a field of characteristic $p>0$. 
Let $X$ be a projective normal surface over $k$ and 
let $\Delta$ be an $\R$-divisor such that $K_X+\Delta$ is $\R$-Cartier. 
Let $L$ be a Cartier divisor on $X$ 
such that $L-(K_X+\Delta)$ is nef and big. 
Let $N$ and $M_1, \cdots, M_q$ be nef $\R$-Cartier $\R$-divisors on $X$ 
with $N\not\equiv 0$. 
Then there exists a positive integer $r_0$ such that 
$$H^{i}(X, \MO_X(L+rN+\sum_{j=1}^qs_jM_j) \otimes_{\MO_X} \mathcal J_{\Delta})=0$$
for any $i >0$ and real numbers $r, s_1, \cdots, s_q$ 
such that $r \geq r_0$, $s_j \geq 0$ and $rN+\sum_{j=1}^qs_jM_j$ is Cartier. 
\end{thm}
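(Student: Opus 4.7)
The plan is to reduce the claim to the global Kawamata--Viehweg-type vanishing of Theorem~\ref{t-glob-klt-kvv} on a log resolution, and then to descend the vanishing to $X$ via the Leray spectral sequence, using the relative Nadel vanishing of Theorem~\ref{t-rel-nadel}.

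First I would take a log resolution $\mu\colon V\to X$ of $(X,\Delta)$ and write $\mu^{*}(K_X+\Delta)=K_V+\Delta_V$. By construction $\{\Delta_V\}$ has simple normal crossing support with coefficients in $[0,1)$, so $(V,\{\Delta_V\})$ is klt, and by the definition of the multiplier ideal $\mathcal J_\Delta=\mu_{*}\MO_V(-\lfloor\Delta_V\rfloor)$. Setting $L_V:=\mu^{*}L-\lfloor\Delta_V\rfloor$, one has
\[
L_V-(K_V+\{\Delta_V\})=\mu^{*}\bigl(L-(K_X+\Delta)\bigr),
\]
which is nef and big on $V$ because $L-(K_X+\Delta)$ is nef and big on $X$ and $\mu$ is birational. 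The divisors $\mu^{*}N,\mu^{*}M_1,\ldots,\mu^{*}M_q$ are nef on $V$ and $\mu^{*}N\not\equiv 0$, so Theorem~\ref{t-glob-klt-kvv} applied to the klt pair $(V,\{\Delta_V\})$ with Cartier divisor $L_V$ yields $r_0\in\Z_{>0}$ such that
\[
H^{i}\!\Bigl(V,\,L_V+r\mu^{*}N+\sum_{j=1}^{q}s_j\mu^{*}M_j\Bigr)=0
\]
whenever $i>0$, $r\geq r_0$, $s_j\geq 0$, and $rN+\sum s_jM_j$ is Cartier.

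Next, to transfer this vanishing to $X$, I would use the Leray spectral sequence for $\mu$ combined with the projection formula. Writing $\Xi:=rN+\sum s_jM_j$, which is Cartier on $X$, one has
\[
R^{q}\mu_{*}\MO_V(L_V+\mu^{*}\Xi)\;\cong\;R^{q}\mu_{*}\MO_V(L_V)\otimes\MO_X(\Xi),
\]
and by the very definition of $\mathcal J_\Delta$, $\mu_{*}\MO_V(L_V)=\MO_X(L)\otimes\mathcal J_\Delta$. Thus, given the relative vanishing $R^{q}\mu_{*}\MO_V(L_V)=0$ for $q>0$, the Leray spectral sequence collapses to
\[
H^{i}\!\bigl(X,\MO_X(L+\Xi)\otimes\mathcal J_\Delta\bigr)\;\cong\;H^{i}\!\bigl(V,L_V+\mu^{*}\Xi\bigr)=0
\]
for every $i>0$, as desired.

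The main obstacle is this relative vanishing. Since $L_V-(K_V+\{\Delta_V\})=\mu^{*}(L-(K_X+\Delta))$ is only $\mu$-numerically trivial, rather than $\mu$-big in the strict sense adopted in the paper, Theorem~\ref{t-rel-nadel} does not apply verbatim. I would remedy this by a Kodaira-style perturbation: by the definition of bigness, write $L-(K_X+\Delta)=A+E$ with $A$ an ample $\R$-divisor on $X$ and $E$ effective, and choose an effective $\mu$-exceptional $\R$-divisor $F$ on $V$ with $-F$ being $\mu$-ample (existence following from the negativity lemma, Lemma~\ref{l-negativity}). For $0<\epsilon\ll 1$, the class $\mu^{*}(L-(K_X+\Delta))-\epsilon F=\mu^{*}A+\mu^{*}E-\epsilon F$ is simultaneously nef and big on $V$ and $\mu$-ample; absorbing the perturbation into an auxiliary klt boundary on $V$ that leaves $\lfloor\Delta_V\rfloor$ unchanged places the setup within the hypotheses of Theorem~\ref{t-rel-nadel}, and the required relative vanishing follows.
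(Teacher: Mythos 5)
Your proof is correct and takes essentially the same route as the paper, which delegates to the argument of \cite[Theorem 2.9]{Tan15}: reduce to a log resolution $\mu\colon V\to X$, apply the global klt Kawamata--Viehweg vanishing (Theorem~\ref{t-glob-klt-kvv}) on $V$, and descend via the Leray spectral sequence using the vanishing of $R^q\mu_*\MO_V(L_V)$. Your final perturbation is, however, unnecessary under the paper's definition of relative bigness: since $\mu$ is birational, writing $\mu^*(L-(K_X+\Delta))=\bigl(\mu^*(L-(K_X+\Delta))-\epsilon F\bigr)+\epsilon F$ with $F$ effective $\mu$-exceptional and $-F$ $\mu$-ample exhibits it as the sum of a $\mu$-ample divisor and an effective divisor, so it is already $\mu$-nef and $\mu$-big, and Theorem~\ref{t-rel-klt-kvv} applied to the klt pair $(V,\{\Delta_V\})$ over $X$ gives $R^q\mu_*\MO_V(L_V)=0$ directly, with no auxiliary boundary needed.
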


\begin{proof}
We can apply the same argument as in \cite[Theorem 2.9]{Tan15} 
by using Theorem~\ref{t-glob-klt-kvv} (resp. Theorem~\ref{t-rel-klt-kvv}) 
instead of \cite[Theorem 2.6]{Tan15} (resp. \cite[Theorem 2.7]{Tan15}). 
\end{proof}

\section{Minimal model program}

\subsection{Base point free theorem}

In Section \ref{s-vanishing}, we established some vanishing theorems of Kawamata--Viehweg type. 
By applying standard arguments called the X-method, 
we show the base point free theorem of Kawamata--Shokurov type 
(Theorem~\ref{t-bpf}). 
We start with a non-vanishing theorem of Shokurov type. 

\begin{thm}\label{t-non-vanishing}
Let $X$ be a projective regular variety over a field $k$ with $\dim X \leq 2$. 
Let $\Delta$ be an $\R$-divisor on $X$ 
such that $\llcorner \Delta \lrcorner \leq 0$ and 
$\{\Delta\}$ is simple normal crossing. 
Let $D$ be a nef Cartier divisor on $X$. 
Assume that the following conditions hold. 
\begin{enumerate}
\item $aD-(K_X+\Delta)$ is nef and big for some positive real number $a$. 
\item If $k$ is of positive characteristic and $\dim X=2$, then $D \not\equiv 0$. 
\end{enumerate}
Then there exists a positive integer $m_0$ such that 
$H^0(X, mD-\llcorner \Delta \lrcorner) \neq 0$ 
for any integer $m$ with $m \geq m_0$. 
\end{thm}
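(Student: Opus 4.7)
The plan is to produce a section of $\MO_X(mD - \lfloor \Delta \rfloor)$ for $m \gg 0$ by combining a Kawamata--Viehweg-type vanishing theorem with the Riemann--Roch formula. Set $N := -\lfloor \Delta \rfloor$ and $B := \{\Delta\}$, so $N$ is an effective $\Z$-divisor, $B$ is an effective snc $\R$-divisor with $0 \le B < 1$, and $\Delta = -N + B$. Since $X$ is regular and $B$ is snc with coefficients in $[0,1)$, the pair $(X, B)$ is klt. The target is $H^0(X, mD + N) \ne 0$ for $m \gg 0$, and the identity
\[
mD + N - (K_X + B) = mD - (K_X + \Delta),
\]
which for every integer $m \ge a$ equals $(aD - (K_X + \Delta)) + (m-a)D$, shows this difference is nef and big (a sum of a nef-and-big and a nef $\R$-divisor).

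The cases $\dim X \le 1$ are handled by a direct Riemann--Roch computation on curves (Theorem~\ref{RR-curve}) combined with Serre duality: if $\deg D > 0$ then $\deg(mD + N) \to \infty$ and the assertion is clear; if $\deg D = 0$, hypothesis (1) forces $\deg(-K_X - \Delta) > 0$, which upon expanding $\Delta = -N + B$ yields $\deg N > 2g - 2$, so Serre duality kills $h^1(mD + N)$ and Riemann--Roch gives $h^0(mD + N) = \deg N + 1 - g > 0$ (using also $\deg N \ge 0$ when $g = 0$).

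For the principal case $\dim X = 2$ I would first establish $H^i(X, mD + N) = 0$ for all $i > 0$ and $m \gg 0$. In characteristic zero this is the classical Kawamata--Viehweg vanishing applied to the klt pair $(X, B)$. In positive characteristic I would invoke Theorem~\ref{t-glob-klt-kvv} with the klt pair $(X, B)$, with auxiliary Cartier divisor $L := m_0 D + N$ for some fixed integer $m_0 \ge a$ (so that $L - (K_X + B)$ is nef and big), and with the nef non-numerically-trivial divisor $D$; the non-triviality of $D$ is exactly hypothesis (2). This yields an $r_0$ with $H^i(X, (m_0 + r)D + N) = 0$ for all $r \ge r_0$ and $i > 0$. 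With the vanishing in hand, the surface Riemann--Roch theorem (Theorem~\ref{RR-surface}) gives, for $m \gg 0$,
\[
h^0(X, mD + N) = \chi(X, \MO_X) + \tfrac{1}{2} m^2 D^2 + m \bigl( D \cdot N - \tfrac{1}{2} D \cdot K_X \bigr) + \tfrac{1}{2} \bigl( N^2 - N \cdot K_X \bigr).
\]

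If $D^2 > 0$ the quadratic term dominates and the non-vanishing is immediate. The main obstacle is the borderline case $D^2 = 0$ with $D \not\equiv 0$, where I must extract strict positivity of the coefficient of $m$. The idea is that a nef non-numerically-trivial class on a projective surface has strictly positive intersection with every ample class (by Kleiman/Hodge index); writing $aD - (K_X + \Delta) = A + E$ with $A$ an ample $\Q$-divisor and $E$ an effective $\R$-divisor (as allowed by bigness, cf.\ Remark~\ref{r-Kodaira-lemma}), this gives $D \cdot (aD - K_X - \Delta) \ge D \cdot A > 0$. Combined with $D^2 = 0$ it forces $D \cdot (K_X + \Delta) < 0$; expanding $\Delta = -N + B$ and using $D \cdot N, D \cdot B \ge 0$ rearranges this to $2 D \cdot N - D \cdot K_X > D \cdot N + D \cdot B \ge 0$, so the linear coefficient is strictly positive and $\chi(X, mD + N) \to +\infty$. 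The remaining degenerate sub-case $D \equiv 0$, which can occur only in characteristic zero by hypothesis (2), is handled by the observation that $\chi(X, mD + N)$ is then independent of $m$ and equals $\chi(X, N)$; Kawamata--Viehweg vanishing applies to $N$ on $(X, B)$ because $N - (K_X + B) = -K_X - \Delta$ is nef and big, and $h^0(X, N) \ge 1$ since $N$ is effective, so $\chi(X, N) \ge 1$.
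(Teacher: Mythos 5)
Your proof is correct, but in the two places where the real work happens it is noticeably heavier than the paper's. The paper dispatches characteristic zero in one line by reduction to \cite[Theorem~2-1-1]{KMM87} after base change to $\overline k$, whereas you rederive it (correctly, including the $D\equiv 0$ subcase). More substantively, in the main case $\dim X=2$, ${\rm char}\,k=p>0$, $D\not\equiv 0$, the paper never invokes any Kawamata--Viehweg-type vanishing: it only uses Serre duality to kill $h^2(X,mD-\llcorner\Delta\lrcorner)$ for $m\gg 0$ (a nef $D\not\equiv 0$ has $D\cdot H>0$ for any ample $H$, so $K_X-mD+\llcorner\Delta\lrcorner$ is eventually not effective), and then the crude inequality $h^0\geq\chi$ plus Riemann--Roch and the single estimate
\[
D\cdot(aD-2\llcorner\Delta\lrcorner-K_X)\;\geq\;D\cdot\bigl(aD-(K_X+\Delta)\bigr)\;>\;0
\]
gives linear growth of $h^0$ in $m$ uniformly, with no case split on $D^2$. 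You instead invoke Theorem~\ref{t-glob-klt-kvv} (applied to the klt pair $(X,\{\Delta\})$) to force $h^1=h^2=0$ and thus $h^0=\chi$, and then split into $D^2>0$ and $D^2=0$. Your route is logically fine --- Theorem~\ref{t-glob-klt-kvv} is established earlier in the paper, so there is no circularity --- and the extra information $h^1=0$ is occasionally useful, but here it is unnecessary: the non-vanishing statement only needs a lower bound on $h^0$, for which $h^0\geq\chi$ after killing $h^2$ suffices. The paper's version is therefore more elementary and avoids both the appeal to $F$-singularity machinery and the casework; it is worth noticing that for this theorem one should resist the reflex to reach for full Kodaira-type vanishing.
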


\begin{proof}
If $k$ is of characteristic zero, 
then the assertion follows from \cite[Theorem 2-1-1]{KMM87} 
by taking the base change to its algebraic closure. 
Thus we may assume that $k$ is of positive characteristic.

We first treat the case where $\dim X=1$. 
If $D \not\equiv 0$, then the assertion follows from the Serre vanishing theorem. 
Thus we may assume that $D \equiv 0$. 
We have that 
{\small 
$$h^0(X, mD-\llcorner \Delta \lrcorner)=
\chi(X, mD-\llcorner \Delta \lrcorner)=\chi(X, -\llcorner \Delta \lrcorner)
=h^0(X, -\llcorner \Delta \lrcorner) \geq 1,$$}
where the second equation follows from the Riemann--Roch theorem 
(cf. Theorem~\ref{RR-curve}) 
and the first and third equations hold by 
the Kodaira vanishing theorem for curves. 

Thus we may assume that $\dim X=2$. 
Since $D \not\equiv 0$ and $D$ is nef, 
it follows from Serre duality that 
$H^2(X, mD-\llcorner \Delta \lrcorner)=0$ for any $m \gg 0$. 
By the Riemann--Roch theorem (cf. Theorem~\ref{RR-surface}), 
we have that 
$$h^0(X, mD-\llcorner \Delta \lrcorner)
\geq 
\frac{1}{2}(mD-\llcorner \Delta \lrcorner) \cdot 
(mD-\llcorner \Delta \lrcorner-K_X)+\chi(X, \MO_X)$$
$$\geq \frac{1}{2}mD \cdot (aD-2\llcorner \Delta \lrcorner-K_X)+
\frac{1}{2}(-\llcorner \Delta \lrcorner) \cdot (-\llcorner \Delta \lrcorner-K_X)
+\chi(X, \MO_X)$$
for any $m \gg 0$. 
Thus it suffices to show that 
$D \cdot (aD-2\llcorner \Delta \lrcorner-K_X)>0$, 
which holds by 
$$D \cdot (aD-2\llcorner \Delta \lrcorner-K_X) 
\geq D \cdot (aD-(K_X+\Delta))>0.$$
We are done. 
\end{proof}


\begin{thm}\label{t-bpf}
Let $B$ be a scheme satisfying Assumption~\ref{a-base}. 
Let $\pi:X \to S$ be a projective $B$-morphism 
from a quasi-projective normal $B$-surface to a quasi-projective $B$-scheme. 
Assume that there are an effective $\R$-divisor $\Delta$ on $X$ and 
a nef Cartier divisor $D$ on $X$ which satisfy the following properties: 
\begin{enumerate}
\item $\llcorner \Delta \lrcorner=0$, 
\item $K_{X/B}+\Delta$ is $\R$-Cartier, 
\item $D-(K_{X/B}+\Delta)$ is $\pi$-nef and $\pi$-big, and
\item if $T$ is the affine spectrum of a field of positive characteristic 
for the Stein factorisation $\pi:X \to T \to S$ of $\pi$, 
then $D \not\equiv 0$. 
\end{enumerate}
Then there exists a positive integer $b_0$ such that $bD$ is $\pi$-free 
for any integer $b$ with $b \geq b_0$. 
\end{thm}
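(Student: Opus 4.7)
The plan is to run the standard X-method, feeding it the non-vanishing input from Theorem~\ref{t-non-vanishing} and the multiplier ideal vanishing statements (Theorem~\ref{t-rel-nadel} and Theorem~\ref{t-glob-nadel}) established above.

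First I would replace $\pi$ by its Stein factorisation so that $\pi_*\MO_X=\MO_S$, and then branch according to $\dim\pi(X)$ and, if $\dim\pi(X)=0$, according to the characteristic of the residue field: the case $\dim\pi(X)\geq 1$ is governed by the relative Nadel vanishing (Theorem~\ref{t-rel-nadel}(1)); the case $\dim\pi(X)=0$ with $T=\Spec k$ in characteristic zero by Theorem~\ref{t-rel-nadel}(2); and the case $\dim\pi(X)=0$ in positive characteristic by Theorem~\ref{t-glob-nadel}, where hypothesis~(4) supplies exactly the nontriviality $N:=D\not\equiv 0$ that Theorem~\ref{t-glob-nadel} demands.

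Next, I would invoke Theorem~\ref{t-non-vanishing}, after passing via a log resolution of $(X,\Delta)$ to reduce to the regular simple normal crossing situation and then pushing the sections back down using relative Kawamata--Viehweg vanishing, to conclude $\pi_*\MO_X(mD)\neq 0$ for every $m\gg 0$. Thus, after replacing $D$ by a large multiple, one may fix a section whose zero divisor $F\in|mD|$ passes through an arbitrarily prescribed closed point $x\in X$ of the relative base locus of $bD$.

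Now comes the X-method proper. Using the log resolution $\mu:V\to X$ of $(X,\Delta+F)$ available from Lipman's theorem (cf.\ Subsection~\ref{ss-log-resolution}), I would choose the relative log canonical threshold $t\in(0,1)$ of $F$ with respect to $(X,\Delta)$ at $x$ and write
$$\mu^{\ast}(K_{X/B}+\Delta+tF)=K_{V/B}+\Delta_V(t).$$
After a small generic perturbation that separates non-klt centres (a standard manoeuvre, needed to isolate a single minimal lc centre at $x$), the multiplier ideal $\mathcal J_{\Delta+tF}$ cuts out a subscheme containing $x$ as an isolated point of its support. Choosing $b\gg 0$ so that
$$bD-(K_{X/B}+\Delta+tF)=(b-tm)D-(K_{X/B}+\Delta)$$
remains $\pi$-nef and $\pi$-big, Theorem~\ref{t-rel-nadel} (respectively Theorem~\ref{t-glob-nadel}, feeding $N=D$) gives
$$R^{1}\pi_{\ast}\bigl(\MO_X(bD)\otimes_{\MO_X}\mathcal J_{\Delta+tF}\bigr)=0,$$
so that taking $\pi_{\ast}$ of the defining short exact sequence of $\mathcal J_{\Delta+tF}$ yields a surjection of $\pi_{\ast}\MO_X(bD)$ onto the structure sheaf of the isolated non-klt component at $x$. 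This lifts to a section of $bD$ not vanishing at $x$, contradicting $x$ being a relative base point. A Noetherian induction on the base locus, followed by the usual trick of combining two coprime values $b_1,b_2$ for which $b_iD$ is $\pi$-free to conclude $\pi$-freeness of $bD$ for \emph{every} $b\geq b_0$, completes the proof.

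The main obstacle I expect is the perturbation step that isolates a single lc centre at $x$: one must guarantee that the log canonical threshold manipulation works uniformly over $S$ and, especially in the positive characteristic fibre case, over possibly imperfect residue fields, while keeping $bD-(K_{X/B}+\Delta+tF)$ within the $\pi$-nef and $\pi$-big regime required by the vanishing theorems. The remaining bookkeeping is standard once the relative/global Nadel vanishing of Section~\ref{s-vanishing} is in place.
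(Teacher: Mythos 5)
Your proposal is correct and follows essentially the same approach as the paper: run the X-method, with the Stein factorisation, the case split on $\dim\pi(X)$ and the characteristic of the residue field, Theorem~\ref{t-non-vanishing} supplying sections, and the relative/global Nadel vanishing theorems (Theorems~\ref{t-rel-nadel}, \ref{t-glob-nadel}) supplying the required $R^1\pi_*$-vanishing for lifting them off a perturbed non-klt centre. The paper's proof is simply a pointer to existing X-method references ([Fuj11], [Tan15, Theorem 3.2], [KMM87, Theorem 3-1-1]) with the new vanishing theorems substituted for the classical inputs; you have spelled out the substance of those citations. One small difference: in the case $\dim\pi(X)=0$ over a field of characteristic zero the paper simply invokes [Fuj11, Theorem 13.1] after base change to the algebraic closure rather than re-running the machine, but since Theorem~\ref{t-rel-nadel}(2) is also available there, your route works just as well.
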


\begin{proof}
Replacing $\pi$ by its Stein factorisation, 
we may assume that $\pi_*\MO_X=\MO_S$. 
By perturbing $\Delta$, 
we may assume that $\Delta$ is a $\Q$-divisor such that $K_{X/B}+\Delta$ 
is $\Q$-Cartier, and $D-(K_{X/B}+\Delta)$ is $\pi$-ample.

First we treat the case where $\dim S=0$, i.e. $S=\Spec\,k$ for a field $k$. 
If $k$ is of characteristic zero, 
then the assertion follows from \cite[Theorem 13.1]{Fuj11} 
possibly after replacing $k$ by its algebraic closure. 
If $k$ is of positive characteristic, 
then the assertion follows from the same argument as in \cite[Theorem 3.2]{Tan15} 
after replacing \cite[Theorem 2.6]{Tan15} and \cite[Theorem 2.9]{Tan15} 
by Theorem~\ref{t-glob-klt-kvv} and Theorem~\ref{t-glob-nadel}, respectively. 

Second we consider the remaining case, i.e. $\dim S \geq 1$. 
If $(X, \Delta)$ is klt, 
then we can apply the same argument as in \cite[Theorem 3-1-1]{KMM87} 
after replacing \cite[Theorem 1-2-3]{KMM87} and 
\cite[the Non-Vanishing Theorem]{KMM87} by 
Theorem~\ref{t-rel-klt-kvv} and Theorem~\ref{t-non-vanishing}, respectively. 
Thus we may assume that $(X, \Delta)$ is not klt. 
Since the problem is local on $S$, we may assume that $S$ is affine. 
It suffices to show that $\MO_X(bD)$ is generated by its global sections 
for any large integer $b$, 
which follows from the same argument as in \cite[Theorem 3.2]{Tan15} 
by using Theorem~\ref{t-rel-klt-kvv} and Theorem~\ref{t-rel-nadel} 
instead of \cite[Theorem 2.6]{Tan15} and \cite[Theorem 2.9]{Tan15}, respectively. 
\end{proof}

\begin{rem}
We can not drop the assumption (4) in the statement of Theorem \ref{t-bpf} 
(cf. \cite[Theorem 1.4]{Tanb}). 
\end{rem}

\subsection{Minimal model program  for $\Q$-factorial surfaces}

The purpose of this subsection is to show 
the minimal model program  for $\Q$-factorial surfaces 
(Theorem~\ref{t-qfac-mmp}). 
To this end, we establish a contraction theorem of extremal rays. 

\begin{thm}\label{t-contraction}
Let $B$ be a scheme satisfying Assumption~\ref{a-base}. 
Let $\pi:X \to S$ be a projective $B$-morphism 
from a quasi-projective normal $B$-surface $X$ 
to a quasi-projective $B$-scheme $S$. 
Let $\Delta$ be an $\R$-divisor on $X$ such that $0 \leq \Delta \leq 1$ 
and $K_{X/B}+\Delta$ is $\R$-Cartier. 
Let $R$ be an extremal ray of $\overline{{\rm NE}}(X/S)$ 
which is $(K_{X/B}+\Delta)$-negative. 
Then there exists a projective $S$-morphism $f:X \to Y$ 
to a projective $S$-scheme $Y$ that satisfies the following properties. 
\begin{enumerate}
\item $f_*\MO_X=\MO_Y$. 
\item For any projective $S$-curve $C$ on $X$ such that $\pi(C)$ is one point, 
$f(C)$ is one point if and only if $[C] \in R$. 
\item If $\dim Y \geq 1$, then the sequence 
$$0 \to \Pic\,Y \xrightarrow{f^*} \Pic\,X \xrightarrow{\cdot C} \Z$$
is exact for any projective $S$-curve $C$ on $X$ such that $f(C)$ is one point. 
\item $\rho(Y/S)=\rho(X/S)-1$. 
\item If $X$ is $\Q$-factorial, then $Y$ is $\Q$-factorial and 
the fibre $f^{-1}(y)$ of any closed point $y \in Y$ is irreducible. 
\end{enumerate}
\end{thm}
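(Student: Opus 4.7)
The argument realises $f$ as the morphism associated to a $\pi$-free Cartier divisor $D$ supporting the ray $R$, obtained by applying the base point free theorem (Theorem~\ref{t-bpf}). Replacing $\pi$ by its Stein factorisation, I assume $\pi_*\MO_X=\MO_S$; this does not affect $\overline{{\rm NE}}(X/S)$. If $\rho(X/S)=1$, then $R=\overline{{\rm NE}}(X/S)$ and $-(K_{X/B}+\Delta)$ is $\pi$-ample, so $f:=\pi$ with $Y:=S$ already gives the desired contraction: (1), (2), (4) are immediate, (3) is asserted only when $\dim Y\geq 1$ and then reduces to the standard fact that, under $f_*\MO_X=\MO_Y$, a line bundle on $X$ trivial on fibres descends to $Y$, and (5) reduces to checking that $S$ inherits $\Q$-factoriality in the fibre-type setting.

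The main case is $\rho(X/S)\geq 2$. Applying Theorem~\ref{t-cone} with a $\pi$-ample divisor $A$ small enough that $(K_{X/B}+\Delta+A)\cdot R<0$, one obtains finitely many rational extremal rays $\R_{\geq 0}[C_1],\ldots,\R_{\geq 0}[C_r]$ generating the $(K_{X/B}+\Delta+A)$-negative part, with $R=\R_{\geq 0}[C_1]$ after relabelling. Since $N^1(X/S)_{\R}$ is finite dimensional (Lemma~\ref{l-pic-number}) and each $[C_i]$ is rational, a finite-dimensional convex analysis argument produces a nef $\Q$-Cartier $\Q$-divisor $H$ with $H^{\perp}\cap\overline{{\rm NE}}(X/S)=R$: one selects a rational supporting functional vanishing on $R$ and strictly positive on every other $[C_i]$ and on $\overline{{\rm NE}}(X/S)_{K_{X/B}+\Delta+A\geq 0}\setminus\{0\}$. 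Taking $D:=mH$ for $m$ sufficiently divisible and large, $D$ becomes a nef Cartier divisor with $D-(K_{X/B}+\Delta)$ $\pi$-ample by Remark~\ref{r-fibrewise-ample}. To invoke Theorem~\ref{t-bpf}, which requires $\llcorner \Delta'\lrcorner=0$ for the boundary, I replace $\Delta$ by a suitable perturbation $\Delta'$ with $\llcorner \Delta'\lrcorner=0$ such that $D-(K_{X/B}+\Delta')$ remains $\pi$-nef and $\pi$-big; when $X$ is $\Q$-factorial one simply takes $\Delta':=\Delta-\epsilon\llcorner\Delta\lrcorner$ for small $\epsilon>0$, and in the general case one passes through a small $\Q$-factorialisation. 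Since $\rho(X/S)\geq 2$, $D\not\equiv_{S}0$, so Theorem~\ref{t-bpf} yields $b_0\in\Z_{>0}$ such that $bD$ is $\pi$-free for all $b\geq b_0$.

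Define $f\colon X\to Y$ to be the Stein factorisation of the $S$-morphism associated to $|bD|$, so (1) is immediate. Property (2) holds because $f$ contracts a curve $C$ with $\pi(C)$ a point precisely when $D\cdot C=0$, which characterises $[C]\in R$ by construction of $D$. Property (4) follows because $f^*$ identifies $N(Y/S)_{\R}$ with the hyperplane $D^{\perp}\cap N(X/S)_{\R}$. For (3), any $L\in\Pic X$ with $L\cdot C=0$ for $f$-contracted $C$ is $f$-numerically trivial; using $f_*\MO_X=\MO_Y$, the standard analysis of $f_*\MO_X(L+bD)$ as $b\gg 0$ shows $L\simeq f^*L'$ uniquely. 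For (5), $\Q$-factoriality of $Y$ transfers from $X$ via $f^*$, because $f^*N^1(Y/S)_{\Q}=R^{\perp}\cap N^1(X/S)_{\Q}$ and the $\Q$-factoriality of $X$ forces every Weil divisor on $Y$ to have $\Q$-Cartier pullback; and irreducibility of closed fibres holds since two prime components of one fibre would have proportional classes in $R$, contradicting a consequence of the negativity lemma (Lemma~\ref{l-negativity}) applied to their difference against $D$ and a supporting ample.

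The principal technical obstacles are: (i) the construction of the rational supporting $H$, which requires combining the cone theorem's rationality of extremal rays with convex analysis in $N^1(X/S)_{\R}$ to achieve \emph{strict} positivity outside $R$; (ii) the perturbation step when $\llcorner\Delta\lrcorner\neq 0$, since subtracting $\epsilon\llcorner\Delta\lrcorner$ preserves the $\R$-Cartier property of $K_{X/B}+\Delta$ only if $\llcorner\Delta\lrcorner$ is itself $\R$-Cartier; and (iii) verifying the descent and irreducibility statements (3) and (5) over an arbitrary excellent base, where one cannot appeal directly to the classical formulations over algebraically closed fields.
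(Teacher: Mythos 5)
Your proposal follows essentially the same strategy as the paper: reduce via Stein factorisation, find a $\pi$-nef supporting divisor $L$ for the ray $R$ via the cone theorem with $L-(K_{X/B}+\Delta)$ $\pi$-ample, invoke the base point free theorem (Theorem~\ref{t-bpf}) to produce $f$, and then verify the descent and $\Q$-factoriality properties. Your case split on $\rho(X/S)=1$ versus $\rho(X/S)\geq 2$ is a cosmetic reorganisation of the paper's split (the paper singles out the case $\dim S=0$ with the supporting divisor numerically trivial, which is exactly where the positivity hypothesis in Theorem~\ref{t-bpf}(4) would otherwise fail). The one place where you are noticeably less explicit than the paper is (3): the paper gives a clean argument by observing that $bL$, $(b+1)L$, $b(L+M)$, $(b+1)(L+M)$ are all $\pi$-free for $b\gg 0$ (after scaling $L$ so $L+M$ is $\pi$-nef with $L+M-(K_{X/B}+\Delta)$ $\pi$-ample), hence all pull back from $Y$, from which $M$ descends by subtraction; your phrase ``standard analysis of $f_*\MO_X(L+bD)$'' gestures at the same idea but would need to be fleshed out along these lines. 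You are right to flag the $\llcorner\Delta\lrcorner\neq 0$ issue as a point that needs care in order to feed a boundary into Theorem~\ref{t-bpf} — the paper's own ``by perturbing $\Delta$, we may assume that $\Delta$ is a $\Q$-divisor'' is terse on this as well, and your suggested reduction (shave the coefficient-one part, or pass through a dlt/$\Q$-factorial modification) is a reasonable way to close it.
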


\begin{proof}
First of all, we will show that there exists a morphism $f:X \to Y$ 
satisfying (1) and (2), and that 
the properties (3)--(5) hold if $\dim Y=0$. 
After that, we will prove that the properties (3), (4) and (5) hold 
if $\dim Y \geq 1$.

Replacing $\pi$ by its Stein factorisation, 
we may assume that $\pi_*\MO_X=\MO_S$. 
By perturbing $\Delta$, 
we may assume that $\Delta$ is a $\Q$-divisor. 
By the cone theorem (Theorem~\ref{t-cone}), we can find 
a $\pi$-nef Cartier divisor $L$ on $X$ 
such that $\overline{{\rm NE}}(X/S) \cap L^{\perp}=R$ and that 
$L-(K_{X/B}+\Delta)$ is $\pi$-ample. 
If $\dim S=0$ (i.e. $S=\Spec\,k$ for some field $k$) and $D \equiv 0$, then 
we can check that the structure morphism $X \to Y:=\Spec\,k$ satisfies 
all the properties (1)--(5). 
Otherwise, by Theorem~\ref{t-bpf}, 
there exists a projective $S$-morphism $f:X \to Y$ 
that satisfies (1) and (2). 
From now on, we assume that $\dim Y \geq 1$.

We show (3). 
Let $M$ be a Cartier divisor on $X$ such that $M \cdot C=0$. 
It suffices to show that there is a Cartier divisor $M_Y$ on $Y$ such that $M=f^*M_Y$. 
We can find 
a $\pi$-nef Cartier divisor $L$ on $X$ 
such that $\overline{{\rm NE}}(X/S) \cap L^{\perp}=R$ and that 
$L-(K_{X/B}+\Delta)$ is $\pi$-ample. 
Replacing $L$ by its multiple, 
we may assume that $L+M$ is $\pi$-nef and $L+M-(K_{X/B}+\Delta)$ is 
$\pi$-ample. 
Since $bL$ and $b(L+M)$ are $\pi$-free for any large integer $b$, 
we have that the divisors $bL, (b+1)L, b(L+M), (b+1)(L+M)$ 
are the pull-backs of Cartier divisors on $Y$. 
Thus also $M$ is the pull-back of a Cartier divisor on $Y$, 
hence (3) holds. 
The assertion (4) directly follows from (3).

We show (5). 
Assume that $X$ is $\Q$-factorial. 
If $\dim Y = 1$, then $Y$ is regular and in particular $\Q$-factorial. 
If $\dim Y = 2$, then we can apply the same argument as in 
\cite[Corollary 3.18]{KM98} by using (3) instead of \cite[Corollary 3.7(4)]{KM98}. 
In any case, $Y$ is $\Q$-factorial. 
We prove that any fibre of $f$ is irreducible. 
We only treat the case where $\dim Y=1$. 
Let $C_1$ and $C_2$ be two curves contained in $f^{-1}(y)$ 
such that $C_1 \neq C_2$ and $C_1 \cap C_2\neq \emptyset$. 
Since $C_1 \cdot C_2>0$ and $C_1^2<0$, 
this contradicts $\rho(X/Y)=1$. 
Thus (5) holds. 
\end{proof}

\begin{thm}\label{t-qfac-mmp}
Theorem~\ref{intro-MMP} holds if 
$(X, \Delta)$ satisfies the condition $({\rm QF})$. 
\end{thm}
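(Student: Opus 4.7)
The plan is to construct the sequence inductively by running the MMP one step at a time. Starting from $(X_0,\Delta_0):=(X,\Delta)$, suppose at stage $i$ we have already built $\pi_i\colon X_i\to S$ with an effective $\R$-divisor $\Delta_i$ satisfying (QF) (so in particular $K_{X_i/B}+\Delta_i$ is automatically $\R$-Cartier). If $K_{X_i/B}+\Delta_i$ is $\pi_i$-nef, we stop and declare $X^{\dagger}:=X_i$, landing in conclusion (4)(a). Otherwise, choose any $\pi_i$-ample $\R$-Cartier divisor $A_i$ on $X_i$; by the cone theorem (Theorem~\ref{t-cone}) the cone $\overline{{\rm NE}}(X_i/S)$ is generated by $\overline{{\rm NE}}(X_i/S)_{K_{X_i/B}+\Delta_i+A_i\geq 0}$ together with finitely many $[C_j]$, so a standard argument produces a $(K_{X_i/B}+\Delta_i)$-negative extremal ray $R_i\subset\overline{{\rm NE}}(X_i/S)$.

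Next I would apply Theorem~\ref{t-contraction} to $R_i$, producing a projective $S$-morphism $f_i\colon X_i\to Y_i$ with $(f_i)_*\MO_{X_i}=\MO_{Y_i}$, $\rho(Y_i/S)=\rho(X_i/S)-1$, and $Y_i$ itself $\Q$-factorial. If $\dim Y_i<\dim X_i=2$, then every projective $S$-curve $C$ contained in a fibre of $f_i$ satisfies $[C]\in R_i$ by Theorem~\ref{t-contraction}(2), hence $-(K_{X_i/B}+\Delta_i)\cdot C>0$; by Remark~\ref{r-fibrewise-ample} this forces $-(K_{X_i/B}+\Delta_i)$ to be $f_i$-ample. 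Together with Theorem~\ref{t-contraction}(1),(4) this places us in conclusion (4)(b) with $X^{\dagger}:=X_i$, $\mu:=f_i$, $Z:=Y_i$, and the program terminates.

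If instead $\dim Y_i=2$, then $f_i$ is birational, and I set $\varphi_i:=f_i$, $X_{i+1}:=Y_i$, $\Delta_{i+1}:=(\varphi_i)_*\Delta_i$. The scheme $X_{i+1}$ is a projective integral normal $S$-surface by construction and is $\Q$-factorial by Theorem~\ref{t-contraction}(5), so $K_{X_{i+1}/B}+\Delta_{i+1}$ is automatically $\R$-Cartier. The coefficients of $\Delta_{i+1}$ still lie in $[0,1]$ because birational push-forward of an effective prime divisor is either zero (if $\varphi_i$-exceptional) or the strict transform with the same coefficient; hence $(X_{i+1},\Delta_{i+1})$ again satisfies (QF), and we may iterate. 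Properties (1)--(3) of the theorem are then immediate from Theorem~\ref{t-contraction}(4) and the construction.

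\textbf{Termination and main obstacle.} At each birational step $\rho(X_{i+1}/S)=\rho(X_i/S)-1$, and $\rho(X_0/S)$ is a finite non-negative integer by Lemma~\ref{l-pic-number}(2), so after finitely many iterations the procedure must halt, either because $K_{X_r/B}+\Delta_r$ has become nef over $S$ (case (a)) or because the fibration case (b) has been reached. The genuine difficulty of the theorem is \emph{not} here: the hard work lies in the cone theorem (Theorem~\ref{t-cone}), the Kawamata--Viehweg-type vanishing results of Section~\ref{s-vanishing}, and the base-point-free/contraction package (Theorem~\ref{t-bpf}, Theorem~\ref{t-contraction}). The only mildly subtle bookkeeping items are the preservation of the coefficient bound $\Delta_{i+1}\leq 1$ (immediate from birational push-forward) and the verification of $\mu$-ampleness of $-(K_{X^\dagger/B}+\Delta^\dagger)$ in the fibration case via Remark~\ref{r-fibrewise-ample}.
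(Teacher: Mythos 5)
Your proof spells out exactly what the paper's one-line proof (cite Theorem~\ref{t-cone} and Theorem~\ref{t-contraction}) has in mind: find a $(K+\Delta)$-negative extremal ray, contract it via Theorem~\ref{t-contraction}, iterate, and terminate because $\rho$ strictly decreases and $\rho(X_0/S)<\infty$ by Lemma~\ref{l-pic-number}(2); the coefficient bound, $\Q$-factoriality, and properties (1)--(4) are read off from Theorem~\ref{t-contraction}. So the approach is the same as the paper's.

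One step deserves a touch more care: the $\mu$-ampleness check in the Mori fibre space case. When $\dim Y_i=1$ the fibres of $f_i$ are one-dimensional, so Remark~\ref{r-fibrewise-ample} reduces to the curve computation you gave. But when $\dim Y_i=0$ the fibre over the unique closed point of $Y_i$ is $X_i$ itself, and Remark~\ref{r-fibrewise-ample} additionally demands $\bigl(-(K_{X_i/B}+\Delta_i)\bigr)^2>0$, which your argument does not verify. This is easy to supply: by Theorem~\ref{t-contraction}(4) one has $\rho(X_i/S)=\rho(Y_i/S)+1=1$, so $N^1(X_i/S)_\R$ is one-dimensional and $-(K_{X_i/B}+\Delta_i)$ is numerically proportional, with positive ratio, to any $\pi_i$-ample divisor (both pair positively with a generating curve class), hence is $\pi_i$-ample, i.e.\ $f_i$-ample. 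With that small addition your proof is complete and follows the paper's route.
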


\begin{proof}
The assertion follows directly from Theorem~\ref{t-cone} 
and Theorem~\ref{t-contraction}. 
\end{proof}

\subsection{Minimal model program  for log canonical surfaces}

In this subsection, 
we establish the minimal model program  for log canonical surfaces. 
Since we have already got the cone theorem and the contraction theorem, 
it only remains to 
show that numerically log canonical surfaces are log canonical 
(Theorem~\ref{t-numerical-lc}). 
Actually it is necessary to establish this result 
in order to formulate the log canonical minimal model program without the $\Q$-factorial condition. 
Such a result is well-known if the base scheme $B$ is an algebraically closed field (\cite[Proposition 4.11]{KM98}). 
The proof of \cite[Proposition 4.11]{KM98} depends on the classification 
of log canonical surface singularities, whilst our strategy avoid using it. 

\begin{dfn}\label{d-nlc}
Let $B$ be a scheme satisfying Assumption~\ref{a-base}. 
Let $X$ be a quasi-projective normal $B$-surface and 
let $\Delta$ is an $\R$-divisor on $X$. 
We say a pair $(X, \Delta)$ is {\em numerically log canonical} if 
the following properties hold. 
\begin{enumerate}
\item $\Delta$ is effective. 
\item For an arbitrary projective birational morphism 
$f:Y \to X$ and the $f$-exceptional $\R$-divisor $E$ uniquely determined by  
$$K_{Y/B}+f_*^{-1}\Delta+E \equiv_f 0,$$
the inequality $f_*^{-1}\Delta+E \leq 1$ holds. 
Note that $E$ is uniquely defined 
since the intersection matrix of the exceptional locus is negative definite 
(\cite[Theorem 10.1]{Kol13}). 
\end{enumerate}
\end{dfn}

It is obvious that a numerically log canonical pair $(X, \Delta)$ is 
log canonical if and only if $(X, \Delta)$ is a log pair, 
i.e. $K_{X/B}+\Delta$ is $\R$-Cartier.

\begin{thm}\label{t-dlt-blowup}
Let $B$ be a scheme satisfying Assumption~\ref{a-base}. 
Let $X$ be a quasi-projective normal $B$-surface and 
let $\Delta$ be an effective $\R$-divisor on $X$. 
For the irreducible decomposition $\Delta=\sum_{i \in I}d_i D_i$, 
we set $\Delta_1:=\sum_{i \in I}\min\{d_i, 1\}D_i$. 
Then there exists a projective birational $B$-morphism $f:Y \to X$ such that 
\begin{enumerate}
\item 
$(Y, f^{-1}_*\Delta_1+E)$ is a two-dimensional $\Q$-factorial dlt pair over $B$, 
where $E$ is the reduced divisor on $X$ with $\Supp\,E=\Ex(f)$, 
\item $K_{Y/B}+f^{-1}_*\Delta_1+E$ is $f$-nef, and 
\item $K_{Y/B}+f^{-1}_*\Delta+E+E' \equiv_f 0$  
for some $f$-exceptional effective $\R$-divisor $E'$. 
\end{enumerate}
\end{thm}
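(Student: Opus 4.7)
The plan is to construct $f$ as the composition of a log resolution of $(X, \Delta)$ with a minimal model program over $X$. First, by Lipman's desingularisation (Subsection~\ref{ss-log-resolution}), I would take a log resolution $g: W \to X$ of $(X, \Supp \Delta)$ and let $F$ be the reduced $g$-exceptional divisor. Then $W$ is regular (in particular $\Q$-factorial), $g^{-1}_*(\Supp \Delta) \cup F$ is simple normal crossing, and $(W, g^{-1}_* \Delta_1 + F)$ is log smooth (hence $\Q$-factorial dlt) with boundary coefficients in $[0,1]$; in particular it satisfies condition $(\mathrm{QF})$ of Theorem~\ref{intro-MMP}.

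Next, I would run a $(K_{W/B} + g^{-1}_* \Delta_1 + F)$-MMP over $X$ via Theorem~\ref{t-qfac-mmp}. At each step $\phi_i: Y_i \to Y_{i+1}$, any $(K+\Gamma_i)$-negative extremal ray in $\overline{{\rm NE}}(Y_i/X)$ is generated by a curve $C$ contracted to a point of $X$, hence $C$ is a component of the reduced exceptional divisor $E_i$ of the structure morphism $g_i: Y_i \to X$. The MMP cannot terminate in a Mori fibre space $\mu: Y \to Z$ over $X$: the induced surjection $Z \to X$ would force $\dim Z \geq \dim X = 2$, contradicting $\dim Z < \dim Y = 2$. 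Hence the MMP terminates with $f: Y \to X$ such that $K_{Y/B} + f^{-1}_* \Delta_1 + E$ is $f$-nef, where $E$ is the pushforward of $F$ to $Y$ (still reduced, since each step contracts only a component of the current reduced exceptional divisor); this yields (2). For (1), $\Q$-factoriality is preserved at each step by Theorem~\ref{t-contraction}(5), and the dlt property is preserved by a standard discrepancy calculation: the contracted curve $C$ satisfies $C^2 < 0$ and $(K_{Y_i} + \Gamma_i) \cdot C < 0$, and the identity $K_{Y_i} + \Gamma_i = \phi_i^*(K_{Y_{i+1}} + \phi_{i*}\Gamma_i) + (a(C, Y_{i+1}, \phi_{i*}\Gamma_i) + \mult_C \Gamma_i) C$ then gives $a(C, Y_{i+1}, \phi_{i*}\Gamma_i) > -1$, so no new log canonical centre is introduced at $\phi_i(C)$ and the non-klt locus of the pushforward pair remains inside its snc locus.

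For (3), set $\Delta_2 := \Delta - \Delta_1 \geq 0$. I would first show that $K_{Y/B} + f^{-1}_* \Delta + E$ is $f$-nef: it decomposes as $(K_{Y/B} + f^{-1}_* \Delta_1 + E) + f^{-1}_* \Delta_2$, whose first summand is $f$-nef by (2), and for any $f$-exceptional curve $C$ the strict transform $f^{-1}_* \Delta_2$ shares no component with $C$, so $C \cdot f^{-1}_* \Delta_2 \geq 0$. Since $Y$ is $\Q$-factorial and the intersection matrix of the $f$-exceptional curves is negative definite (\cite[Theorem 10.1]{Kol13}), there exists a unique $f$-exceptional $\R$-divisor $E'$ on $Y$ with $K_{Y/B} + f^{-1}_* \Delta + E + E' \equiv_f 0$. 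Then $-E'$ is $f$-nef (being $\equiv_f$ to an $f$-nef divisor), and since $f_* E' = 0$ is trivially effective, Lemma~\ref{l-negativity}(1) applied to $-E'$ yields $E' \geq 0$, establishing (3).

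The most delicate point, I expect, will be verifying preservation of the dlt property through the steps of the MMP in the excellent setting (used for (1)): one must check that the discrepancy bookkeeping and the snc-locus argument sketched above go through for pairs over an excellent base scheme as in Subsection~\ref{ss-mmp-sing}, using only the outputs of Theorems~\ref{t-contraction} and~\ref{t-qfac-mmp}.
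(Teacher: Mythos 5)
Your proposal follows essentially the same route as the paper: take a log resolution of $(X,\Delta)$, run a $(K_{W/B}+\mu^{-1}_*\Delta_1+E_W)$-MMP over $X$ via Theorem~\ref{t-qfac-mmp}, and obtain (3) from the negativity lemma. The only cosmetic difference is that you define $E'$ directly on $Y$ via negative-definiteness of the exceptional intersection matrix and apply the negativity lemma once, whereas the paper carries $E'_W$ along from the log resolution, pushes it forward to obtain $E'$, and invokes the negativity lemma twice; the two presentations are equivalent.
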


A projective birational morphism satisfying the above properties (1)--(3) 
is called a dlt blowup of $(X, \Delta)$.

\begin{proof}
Let $\Delta_2:=\Delta-\Delta_1$. 
We take a log resolution $\mu:W \to X$ of $(X, \Delta)$. 
We can write 
$$K_{W/B}+\mu^{-1}_*\Delta_1+E_W \equiv_{\mu} -\mu^{-1}_*\Delta_2+E'_W$$
where $E_W$ is the $\mu$-exceptional reduced divisor with $\Supp E_W=\Ex(\mu)$ and $E'_W$ is an $\mu$-exceptional $\R$-divisor. 
We run a $(K_{W/B}+\mu^{-1}_*\Delta_1+E_W)$-MMP over $X$ (Theorem~\ref{t-qfac-mmp}), 
and set $f:Y \to X$ to be the end result. 
Let $\mu_Y:W \to Y$ be the induced morphism, 
$E:=(\mu_Y)_*E_W$ and $E':=-(\mu_Y)_*E_W'$. 
Then (1) and (2) hold.  
By the negativity lemma (Lemma~\ref{l-negativity}), we get 
$$K_{W/B}+\mu^{-1}_*\Delta+E_W-E'_W=\mu_Y^*(K_{Y/B}+f^{-1}_*\Delta+E+E'),$$
which implies $K_{Y/B}+f^{-1}_*\Delta_1+E \equiv_f -f^{-1}_*\Delta_2-E'$. 
It follows again from the negativity lemma (Lemma~\ref{l-negativity}) that $E'$ is effective, 
hence (3) holds. 
\end{proof}

\begin{rem}\label{r-dlt-blowup}
We use the same notation as in Theorem~\ref{t-dlt-blowup}. 
\begin{enumerate}
\item[(a)] If $(X, \Delta)$ is a log pair i.e. $K_{X/B}+\Delta$ is $\R$-Cartier, 
then Theorem~\ref{t-dlt-blowup}(3) implies that $K_{Y/B}+f_*^{-1}\Delta+E+E'=f^*(K_{X/B}+\Delta)$. 
\item[(b)] $(X, \Delta)$ is numerically log canonical 
if and only if $\Delta=\Delta_1$ and $E'$ in Theorem~\ref{t-dlt-blowup}(3) is zero. 
\end{enumerate}
\end{rem}

\begin{lem}\label{lcgermsing}
Let $B$ be a scheme satisfying Assumption~\ref{a-base}. 
Let $(X, \Delta)$ be a numerically log canonical quasi-projective $B$-surface. 
Assume that $x\in X$ is a unique non-regular closed point of $X$ and that $(X\setminus\{x\}, \Delta|_{X \setminus \{x\}})$ is dlt. 
Then one of the following assertions hold. 
\begin{enumerate}
\item{$X$ is $\Q$-factorial.}
\item{$x \not \in \Supp \Delta$ and there exists a projective birational $B$-morphism 
$$g:Z \to X$$
from a normal $\Q$-factorial quasi-projective $B$-surface $Z$ 
such that $E:=\Ex(g)$ is a prime divisor and that $(K_{Z/B}+E)\cdot E=0$. 
Moreover there exists a dlt blowup $f:Y \to X$ of $(X, \Delta)$ 
(cf. Theorem~\ref{t-dlt-blowup}) that factors through $g$. }
\end{enumerate}
\end{lem}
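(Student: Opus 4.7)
The plan is to take a dlt blowup of $(X,\Delta)$, use numerical log canonicity to classify the exceptional configuration, and then either deduce $\Q$-factoriality or build $g$ as a partial contraction. Applying Theorem~\ref{t-dlt-blowup}, take a dlt blowup $f\colon Y\to X$ with reduced $f$-exceptional divisor $E=E_1+\cdots+E_r$. By Remark~\ref{r-dlt-blowup}(b), numerical log canonicity gives $K_{Y/B}+f_*^{-1}\Delta+E\equiv_f 0$, and Theorem~\ref{t-dlt-blowup} gives that $Y$ is $\Q$-factorial with each $E_i$ a regular projective curve. Since $(X,\Delta)$ is dlt away from $x$, every $E_i$ lies over $x$; Zariski's connectedness theorem forces $\Supp E$ to be connected; and the intersection matrix $(E_i\cdot E_j)$ is negative definite by \cite[Theorem~10.1]{Kol13}. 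Intersecting the triviality with $E_i$ and using Corollary~\ref{adjunction-genus} yields
$$2p_a(E_i)-2\;=\;-(E-E_i)\cdot E_i-f_*^{-1}\Delta\cdot E_i\;\leq\;0,$$
so $p_a(E_i)\in\{0,1\}$; moreover $p_a(E_i)=1$ forces both nonnegative terms on the right to vanish, which combined with the connectedness of $E$ implies $r=1$ and $f_*^{-1}\Delta\cdot E_1=0$.

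Suppose first that every $E_i$ is rational and that the dual graph of $E$ is a tree. Mayer--Vietoris then yields $H^1(E,\MO_E)=0$, whence $R^1f_*\MO_Y$ vanishes at $x$ by the theorem on formal functions---that is, $x$ is a rational singularity of $X$. The standard descent argument, combining this vanishing with the relative Kawamata--Viehweg vanishing (Theorem~\ref{t-rel-klt-kvv}) and the $\Q$-factoriality of $Y$, then shows that every Weil divisor on $X$ is $\Q$-Cartier at $x$, giving alternative (1).

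Otherwise either $r=1$ with $p_a(E_1)=1$, or $r\geq 2$ with every $E_i$ rational and the dual graph of $E$ containing a cycle. In the first subcase, set $g:=f\colon Z:=Y\to X$ and $E:=E_1$: then $Z$ is $\Q$-factorial, $E=\Ex(g)$ is prime, $(K_{Z/B}+E)\cdot E=2p_a(E_1)-2=0$, and $f_*^{-1}\Delta\cdot E_1=0$, so $x\notin\Supp\Delta$ and (2) holds. In the second subcase, first contract all tree branches of the dual graph hanging off the cycle, and then contract all but one cycle component, producing $g\colon Z\to X$ whose exceptional divisor is the image of the remaining cycle component---an irreducible nodal rational curve of arithmetic genus $1$. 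Each intermediate contraction is realised by choosing an $f$-nef Cartier divisor $L$ with $L\cdot E_j=0$ exactly on the rays to be contracted---such $L$ exists because the inverse of the negative definite intersection matrix is entry-wise non-positive, letting us hit any prescribed face of the effective cone---and applying the base-point-free theorem (Theorem~\ref{t-bpf}). Pushing the triviality forward along $Y\to Z$ and intersecting with $E$ then gives $(K_{Z/B}+E)\cdot E=0$ and $g_*^{-1}\Delta\cdot E=0$.

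The main obstacle is the descent step in the tree case: transferring the classical statement that rational surface singularities are $\Q$-factorial into the excellent-base setting. The approach is to use relative Kawamata--Viehweg (Theorem~\ref{t-rel-klt-kvv}) together with $R^1f_*\MO_Y=0$ to show that for any Weil divisor $D$ on $X$, the unique $\R$-divisor $D':=f_*^{-1}D+\sum a_iE_i$ that is $f$-numerically trivial (existing by negative definiteness) is $\Q$-Cartier and is in fact the pullback $f^*M$ of a $\Q$-Cartier divisor $M$ on $X$, producing the required $\Q$-Cartier structure on $D$. A secondary subtlety is verifying in the cycle subcase that the intermediate peeling off of one component at a time produces $\Q$-factorial surfaces over $X$, which requires careful bookkeeping of the remaining extremal rays of $\overline{{\rm NE}}(Y_k/X)$.
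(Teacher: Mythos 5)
Your proposal takes a genuinely different route from the paper, and unfortunately it has an essential gap. You classify the exceptional configuration of a dlt blowup into \lq\lq tree of rational curves'' versus \lq\lq elliptic component or cycle'', and deduce $\Q$-factoriality in the first case via a rational-singularities descent argument. The paper instead never classifies: starting from the dlt blowup $f:Y\to X$ with $K_{Y/B}+f_*^{-1}\Delta+\sum_{i=1}^rE_i\equiv_f 0$, it runs a $(K_{Y/B}+f_*^{-1}\Delta+\sum_{i\geq 2}E_i)$-MMP over $X$, which is the same as a $(-E_1)$-MMP; the negativity lemma together with connectedness of $g^{-1}(x)$ shows this contracts exactly $E_2,\dots,E_r$, yielding $g:Z\to X$ with $\Ex(g)=E=h_*E_1$ prime, $Z$ $\Q$-factorial, and $(K_{Z/B}+g_*^{-1}\Delta+E)\cdot E=0$. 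The dichotomy is then read off directly: if $x\notin\Supp\Delta$ then $g_*^{-1}\Delta\cdot E=0$ and alternative (2) holds; if $x\in\Supp\Delta$ then $g_*^{-1}\Delta\cdot E>0$, so $g$ contracts a $(K_{Z/B}+\tfrac12 g_*^{-1}\Delta+E)$-negative extremal ray, and Theorem~\ref{t-contraction}(5) immediately gives $\Q$-factoriality of $X$.

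The concrete gap in your argument is the descent step you flag yourself: you need that a normal excellent surface with $R^1f_*\MO_Y=0$ for a resolution $f$ is $\Q$-factorial, and you only sketch this. Over $\mathbb C$ this is classical, but the standard proofs use tools (exponential sequence, Artin's contractibility theorem over a field, etc.) that are not available in the generality of Assumption~\ref{a-base}. The vanishing $R^1f_*\MO_Y=0$ plus relative Kawamata--Viehweg does not by itself produce a $\Q$-Cartier structure on an arbitrary Weil divisor of $X$; one needs a genuine descent mechanism, and you have not supplied it. The paper deliberately sidesteps this issue by producing $g$ once and for all and then invoking the already-established contraction theorem, which carries $\Q$-factoriality along the MMP. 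A secondary worry with your classification: you need the \lq\lq non-tree'' case to force $x\notin\Supp\Delta$. For an irreducible elliptic component ($r=1$, $\deg\omega_{E_1}=0$) you derive $f_*^{-1}\Delta\cdot E_1=0$, and for a pure cycle the arithmetic of $\deg\omega_{E_i}$ and the two neighbours can be made to work, but you assert rather than check the key fact that a numerically lc configuration containing a cycle has no further branches and no component of $f_*^{-1}\Delta$ attached; without that, the pushed-forward equality only gives $(K_{Z/B}+g_*^{-1}\Delta+E)\cdot E=0$, not the desired $(K_{Z/B}+E)\cdot E=0$. In short, your route requires extra classification work and an unresolved $\Q$-factoriality descent, whereas the paper's MMP argument avoids both.
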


\begin{proof}
Let $f:Y \to X$ be a dlt blowup whose existence is guaranteed 
in Theorem~\ref{t-dlt-blowup}. 
Since $(X\setminus\{x\}, \Delta|_{X \setminus \{x\}})$ is dlt, 
possibly after replacing $Y$ by the scheme obtained by gluing $X \setminus \{x\}$ 
and $f^{-1}(X^0)$ for sufficiently small open neighbourhood $X^0$ of $x \in X$, 
we may assume that $f(\Ex(f))=\{x\}$. 
For the sum $\sum_{i=1}^r E_i$ of the $f$-exceptional prime divisors $E_i$, 
we can write as follows (cf. Remark~\ref{r-dlt-blowup}):
$$K_{Y/B}+f_*^{-1}\Delta+\sum_{i=1}^rE_i \equiv_f 0.$$

\begin{claim}
There exists a projective birational $B$-morphism $g:Z \to X$ 
of quasi-projective normal $B$-surfaces which satisfies the following properties: 
\begin{enumerate}
\item[(i)] $Z$ is $\Q$-factorial, 
\item[(ii)] $E:=\Ex(g)$ is a prime divisor, 
\item[(iii)] $(K_{Z/B}+g_*^{-1}\Delta+E) \cdot E=0$, and 
\item[(iv)] $f$ factors through $g$. 
\end{enumerate}
\end{claim}

\begin{proof}[Proof of Claim]
We run a $(K_{Y/B}+f_*^{-1}\Delta+\sum_{i=2}^rE_i)$-MMP over $X$ 
(Theorem~\ref{t-qfac-mmp}). 
We show that the end result $g:Z \to X$ satisfies (i)--(iv). 
Let $h:Y \to Z$ be the induced birational morphism. 
It is clear that (i) and (iv) hold. 
Since $K_{Y/B}+g_*^{-1}\Delta+\sum_{i=2}^rE_i \equiv_f -E_1$, 
each step of this MMP contracts $E_i$ for some $i \in \{2, \cdots, r\}$. 
In particular, we get $h_*E_1 \neq 0$. 
Furthermore, all of $E_2, \cdots, E_r$ are contracted, 
since $h_*E_1\neq 0$, $-h_*E_1$ is $g$-nef and $g^{-1}(x)$ is connected. 
Therefore, (ii) holds and $E=h_*E_1$. 
It follows from $K_{Y/B}+f_*^{-1}\Delta+\sum_{i=1}^rE_i \equiv_f 0$ that 
$$K_{Y/B}+f_*^{-1}\Delta+\sum_{i=1}^rE_i=h^*(K_{Z/B}+g_*^{-1}\Delta+E).$$
Thus (iii) holds. 
\end{proof}

Assuming that $x \in \Supp \Delta$, 
it suffices to show that $X$ is $\Q$-factorial. 
By (i)--(iii), the projective birational morphism $g:Z \to X$ 
is the contraction of a $(K_{Z/B}+\frac{1}{2}g_*^{-1}\Delta+E)$-negative 
extremal ray of $\overline{\text{NE}}(Z/X)$. 
It follows from Theorem~\ref{t-contraction} that $X$ is $\Q$-factorial, as desired. 
It completes the proof of Lemma~\ref{lcgermsing}. 
\end{proof}

\begin{rem}\label{r-ZE-lc}
Since a pair $(Z, E)$, appearing in Lemma~\ref{lcgermsing}(2), 
is numerically log canonical and $Z$ is $\Q$-factorial, 
$(Z, E)$ is log canonical. 
\end{rem}

\begin{cor}\label{c-dlt-Qfac}
Let $B$ be a scheme satisfying Assumption~\ref{a-base}. 
Let $(X, \Delta)$ be a two-dimensional dlt pair over $B$. 
Then $X$ is $\Q$-factorial. 
\end{cor}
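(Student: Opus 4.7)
The plan is to check $\Q$-factoriality locally at each closed point of $X$, reducing to the dichotomy provided by Lemma~\ref{lcgermsing}. Since regular local rings are automatically $\Q$-factorial, I only need to worry about non-regular closed points $x \in X$; after shrinking $X$ to a suitable affine neighborhood of $x$, I can assume that $x$ is the unique non-regular closed point of $X$ (the non-regular locus of a Noetherian normal surface is closed of codimension at least two, so it is discrete). Because a dlt pair is log canonical and hence numerically log canonical, and $(X \setminus \{x\}, \Delta|_{X \setminus \{x\}})$ is dlt by restriction, the hypotheses of Lemma~\ref{lcgermsing} are met. If conclusion (1) of that lemma holds then $X$ is $\Q$-factorial and we are done; the substance of the proof is to rule out conclusion (2).

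Suppose for contradiction that conclusion (2) holds. Then $x \notin \Supp \Delta$ and there exists a projective birational $B$-morphism $g \colon Z \to X$ from a $\Q$-factorial surface with prime exceptional divisor $E := \Ex(g)$ satisfying $(K_{Z/B}+E) \cdot E = 0$. By further shrinking $X$ I may assume $\Delta = 0$ on $X$, so $(X, 0)$ is dlt, hence klt (no boundary component has coefficient one). Writing $K_{Z/B} = g^{*} K_{X/B} + a E$, the klt condition gives $a > -1$, and the projection formula together with $g_{*}E = 0$ yields
$$0 \;=\; (K_{Z/B}+E)\cdot E \;=\; (g^{*}K_{X/B})\cdot E + (a+1)E^{2} \;=\; (a+1)E^{2},$$
forcing $E^{2}=0$. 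This contradicts the negative-definiteness of the intersection matrix of the exceptional locus of a projective birational morphism of normal surfaces \cite[Theorem 10.1]{Kol13}, which forces $E^{2}<0$ for any single exceptional prime divisor.

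The main obstacle I anticipate is the discrepancy step. One must be confident that passing from the dlt pair $(X, \Delta)$ to the pair $(X, 0)$ in a neighborhood of $x$ genuinely produces a klt pair, so that the strict inequality $a > -1$ is available to be combined with $(K_{Z/B}+E)\cdot E = 0$. This rests on the standard facts that decreasing the boundary strictly increases discrepancies and that a dlt pair is klt away from $\Supp \llcorner \Delta \lrcorner$; but in the excellent-base setting of Subsection~\ref{ss-mmp-sing} both points should be articulated carefully, and indeed this is precisely the reason Lemma~\ref{lcgermsing} was phrased so as to deliver the precise geometric object $(Z,E)$ on which the computation can be performed.
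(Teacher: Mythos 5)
Your proof is correct and supplies exactly the details behind the paper's terse assertion that the corollary ``directly follows from Lemma~\ref{lcgermsing}'': one localises at a (necessarily isolated) non-regular closed point $x$, applies the lemma, and then rules out alternative (2) by the discrepancy computation $(K_{Z/B}+E)\cdot E = (a+1)E^2$ with $a>-1$ (coming from klt-ness of $(X,0)$ near $x$, since dlt with trivial floor is klt), which contradicts $E^2<0$ from the negative-definiteness of \cite[Theorem 10.1]{Kol13}. This is the same approach as the paper.
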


\begin{proof}
The assertion directly follows from Lemma~\ref{lcgermsing}. 
\end{proof}

\begin{lem}\label{l-slc}
Let $(C, D)$ be a one-dimensional projective semi log canonical pair 
over a field $k$ in the sense of \cite[Definition-Lemma 5.10]{Kol13}. 
If $C$ is irreducible and $K_C+D \equiv 0$, 
then there exists a positive integer $m$ such that 
$\MO_C(m(K_C+D)) \simeq \MO_C$. 
\end{lem}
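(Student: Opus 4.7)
The plan is to reduce to a dichotomy on the arithmetic genus $p_a(C)=1-\chi(C,\MO_C)$ using the degree identity $\deg_k(K_C+D)=0$, and then verify the triviality of some integer multiple of $K_C+D$ case by case.

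First I would pass to $k':=H^0(C,\MO_C)$ and replace $k$ by $k'$; this is harmless because $\omega_{C/k}$ and $\omega_{C/k'}$ agree as $\MO_C$-modules (they differ by pullback of the invertible sheaf $\omega_{k'/k}$ on $\Spec\,k'$, which is trivial) and both the slc hypothesis and the numerical triviality $K_C+D\equiv 0$ are preserved. Thus I may assume $H^0(C,\MO_C)=k$, so $\chi(\MO_C)=1-p_a(C)$. An irreducible one-dimensional slc pair has only nodal singularities, so $C$ is Gorenstein and Corollary~\ref{genus-formula} gives $\deg_k\omega_C=2p_a(C)-2$. Combined with $\deg_k(K_C+D)=0$ and the effectivity of $D$, this forces $p_a(C)\in\{0,1\}$.

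If $p_a(C)=1$, then $\deg_k D=0$ forces $D=0$. The invertible sheaf $\omega_C$ has degree zero and, by Serre duality, $h^0(\omega_C)=h^1(\MO_C)=1$; a nonzero global section of $\omega_C$ cuts out an effective Cartier divisor of degree zero, necessarily the zero divisor, giving $\omega_C\simeq\MO_C$, so $m=1$ works.

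If $p_a(C)=0$, then $C$ must be smooth of genus zero, since any nodal singularity would strictly increase the arithmetic genus over that of the normalisation, whereas $p_a$ is already at its minimum value $0$. For such $C$ the degree map $\Pic\,C\hookrightarrow\Z$ is injective (its image being $\Z$ if $C\simeq\mathbb P^1_k$, and $2\Z$ otherwise), so it suffices to produce $m>0$ with $m(K_C+D)$ a Cartier divisor of degree zero; taking $m$ to clear the denominators of $D$ does the job. The main obstacle I anticipate is justifying that $D$ has rational coefficients in this second case—a hypothesis tacit in the statement, since $\MO_C(m(K_C+D))$ is only defined when $m(K_C+D)$ is an integer divisor. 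Granting this, the remainder is a short computation via Serre duality and the structure of $\Pic$ of a genus-zero $k$-curve.
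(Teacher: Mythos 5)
Your proof follows the same dichotomy as the paper's (split on whether $D=0$ or not, note $C$ is nodal and Gorenstein), and your treatment of the $D=0$ case — degree zero plus $h^0(\omega_C)\neq 0$ via Serre duality forces $\omega_C\simeq\MO_C$ — is essentially identical. The difference is in the $D\neq 0$ case: the paper simply observes that $\omega_C^{-1}$ is then ample and invokes \cite[Lemma 10.6]{Kol13} as a black box, whereas you argue directly that $p_a(C)=0$, hence $C$ is normal, hence the degree map on $\Pic C$ is injective, so clearing denominators of $D$ gives a torsion witness. Your route is more self-contained and does explain what Koll\'ar's lemma is doing, but it carries two technical imprecisions. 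First, from $p_a(C)=0$ you may only conclude that $C$ has no nodes and is therefore normal, hence \emph{regular}; over an imperfect field a regular projective curve of arithmetic genus $0$ need not be \emph{smooth}, so the appeal to the conic classification ($\Pic\simeq\Z$ or $2\Z$) is not available in general. The cleaner way to get injectivity of $\deg\colon\Pic C\hookrightarrow\Z$ in your setting is Riemann--Roch directly: after reducing to $H^0(C,\MO_C)=k$ as you did, $\chi(\MO_C)=1$, so any invertible $L$ with $\deg L=0$ has $h^0(L)\geq\chi(L)=1$, forcing $L\simeq\MO_C(E)$ for an effective $E$ of degree $0$, hence $E=0$. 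Second, you are right that the statement implicitly requires $D$ to be a $\Q$-divisor (otherwise $\MO_C(m(K_C+D))$ makes no sense), but this is a feature of both arguments, not a defect particular to yours; in the paper's application $D=\mathrm{Diff}_E(0)$ on a $\Q$-factorial lc surface, so the coefficients are indeed rational. With the regular/smooth correction and the Riemann--Roch substitution your argument is a valid alternative to citing \cite[Lemma 10.6]{Kol13}.
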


\begin{proof}
Since $C$ is semi log canonical, the singularities of $C$ is at worst nodal. 
In particular $C$ is Gorenstein. 
If $D \neq 0$, then $\omega_C^{-1}$ is ample and the assertion follows from 
\cite[Lemma 10.6]{Kol13}. 
Thus we may assume that $D=0$. 
Then we get $\omega_C \simeq \MO_C$ by 
$$h^0(X, \omega_X)=h^1(X, \omega_X)=h^0(X, \MO_X) \neq 0,$$
where the first (resp. second) equation 
follows from Corollary~\ref{genus-formula} (resp. Serre duality).
\end{proof}

\begin{thm}\label{t-numerical-lc}
Let $B$ be a scheme satisfying Assumption~\ref{a-base}. 
Let $(X, \Delta)$ be a numerically log canonical quasi-projective $B$-surface. 
Then the following assertions hold. 
\begin{enumerate}
\item{$K_{X/B}$ and all the irreducible components of $\Delta$ are $\Q$-Cartier. 
In particular, $(X, \Delta)$ is log canonical, that is, $K_{X/B}+\Delta$ is $\R$-Cartier.}
\item{There exists an open subset $X^0 \subset X$ 
such that $\Supp \Delta \subset X^0$ and that $X^0$ is $\Q$-factorial. }
\end{enumerate}
\end{thm}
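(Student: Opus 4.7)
My approach works locally at each closed point $x\in X$ and relies on the germ-level dichotomy of Lemma~\ref{lcgermsing}. Since both $\Q$-Cartierness and $\Q$-factoriality are local conditions, and the non-regular locus of the normal surface $X$ is a finite set of closed points where every Weil divisor is Cartier (so both claims are trivial at regular points), I fix a non-regular $x$ and shrink $X$ so that $x$ is the only non-regular point. Taking $f=\operatorname{id}_X$ in Definition~\ref{d-nlc} immediately forces $\Delta\leq 1$. On the regular surface $X\setminus\{x\}$ the divisor $K_{X/B}+\Delta$ is automatically $\R$-Cartier, so numerical log canonicity there coincides with ordinary log canonicity, and on a regular surface this forces $\Supp\Delta$ to be simple normal crossing; consequently $(X\setminus\{x\},\Delta|_{X\setminus\{x\}})$ is dlt and Lemma~\ref{lcgermsing} applies at $x$.

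In case (1) of Lemma~\ref{lcgermsing}, $X$ is $\Q$-factorial in a neighbourhood of $x$, yielding the $\Q$-Cartierness of $K_{X/B}$ and of every component of $\Delta$ through $x$, together with the local $\Q$-factoriality required by (2). Since case (2) of Lemma~\ref{lcgermsing} explicitly requires $x\notin\Supp\Delta$, every non-regular point lying in $\Supp\Delta$ automatically falls into case (1); assembling the $\Q$-factorial neighbourhoods so obtained around the points of $\Supp\Delta$ with the regular locus of $X$ produces the open set $X^0\supset\Supp\Delta$ demanded by (2). What is left for (1) is a non-regular $x\notin\Supp\Delta$ in case (2) of Lemma~\ref{lcgermsing}: here I have $g\colon Z\to X$ with $Z$ normal and $\Q$-factorial, $E:=\Ex(g)$ a prime divisor with $(K_{Z/B}+E)\cdot E=0$, and $(Z,E)$ log canonical by Remark~\ref{r-ZE-lc}. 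The task is to show $K_{X/B}$ is $\Q$-Cartier at $x$.

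By adjunction $(K_{Z/B}+E)|_E=K_E+\operatorname{Diff}_E(0)$, so $(E,\operatorname{Diff}_E(0))$ is a projective semi-log canonical curve over $k(x)$ with $K_E+\operatorname{Diff}_E(0)\equiv 0$; since $E$ is irreducible, Lemma~\ref{l-slc} yields an integer $m>0$ with $\mathcal{L}|_E\simeq\MO_E$ for $\mathcal{L}:=\MO_Z(m(K_{Z/B}+E))$, and I may enlarge $m$ so that $mE$ is in addition Cartier on $Z$. The heart of the argument is to upgrade this to $\mathcal{L}\simeq\MO_Z$ on a Zariski neighbourhood of $E$: once this is established, pushing forward via $g_*\MO_Z=\MO_X$, $g_*E=0$ and $g_*K_{Z/B}=K_{X/B}$ yields $mK_{X/B}$ Cartier near $x$. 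This upgrade is the principal obstacle: I carry it out by inductively lifting the trivialisation from the thickening $nE$ to $(n+1)E$ (working with the Cartier thickening $mE$ when $E$ itself is only $\Q$-Cartier), where the extension obstruction lies in $H^{1}\bigl(E,\MO_{E}(-nE|_{E})\bigr)$; since $\deg_{k(x)}\omega_{E}=(K_{Z/B}+E)\cdot E=0$ while $\deg_{k(x)}\MO_{E}(-nE|_{E})=-nE^{2}>0$, Serre duality on the Gorenstein slc curve $E$ kills these $H^{1}$'s. The compatible liftings together with the theorem on formal functions applied to $g_{*}$ identify the completion $\widehat{(g_{*}\mathcal{L})_{x}}$ with $\widehat{\MO_{X,x}}$, and faithfully flat descent along $\MO_{X,x}\to\widehat{\MO_{X,x}}$ finally gives that $(g_{*}\mathcal{L})_{x}$ is free of rank one, equivalently that $mK_{X/B}$ is Cartier at $x$, completing (1).
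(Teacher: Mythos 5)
Your reduction steps coincide with the paper's: both use Lemma~\ref{lcgermsing} to localise at a single non-regular point $x$, observe that case (2) forces $x\notin\Supp\Delta$ so that part (2) of the Theorem and $\Q$-Cartierness of the components of $\Delta$ are immediate, and both invoke Lemma~\ref{l-slc} (via adjunction to $(E,\operatorname{Diff}_E(0))$) to produce $m>0$ with $\MO_Z(m(K_{Z/B}+E))|_E\simeq\MO_E$. Where you diverge is in the ``upgrade'' step. The paper does not attempt to trivialise $\mathcal L:=\MO_Z(m(K_{Z/B}+E))$ on a neighbourhood of $E$. Instead it passes back up to the dlt blowup $f\colon Y\to X$ (factoring through $Z$), writes the single ideal-sheaf sequence $0\to\MO_Y(-E_Y)\to\MO_Y\to\MO_{E_Y}\to 0$, twists by the line bundle $\MO_Y(m(K_{Y/B}+E_Y))$, and kills the obstruction $R^1 f_*\MO_Y(m(K_{Y/B}+E_Y)-E_Y)$ in one stroke using the relative Kawamata--Viehweg vanishing Theorem~\ref{t-rel-klt-kvv} (applicable with $\Delta'=0$ because $D-K_{Y/B}\equiv_f 0$ and $f$ is birational). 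This gives $f$-freeness, hence $f$-triviality, of $m(K_{Y/B}+E_Y)$ directly, with no induction and no use of the full formal neighbourhood.

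Your replacement argument --- inductive lifting of the trivialisation through infinitesimal thickenings and the theorem on formal functions --- has a genuine gap. The computation placing the obstruction in $H^1\bigl(E,\MO_E(-nE|_E)\bigr)$, followed by Serre duality and the degree bound, is correct \emph{only when $E$ is Cartier on $Z$}: then $I^n/I^{n+1}\simeq\MO_E(-nE|_E)$. But $Z$ is only $\Q$-factorial (Lemma~\ref{lcgermsing}(2)), so $E$ is in general only $\Q$-Cartier, and $I^n/I^{n+1}$ need not be an invertible $\MO_E$-module. Your parenthetical fix --- ``working with the Cartier thickening $mE$'' --- does not resolve this: the successive quotients are then $\MO_Z(-nmE)/\MO_Z(-(n+1)mE)$, sheaves on the \emph{non-reduced} scheme $mE$ rather than on $E$, so the Serre-duality-on-a-Gorenstein-curve computation you quote no longer literally applies, and moreover the filtration by $\MO_Z(-nmE)$ is not manifestly cofinal with the $I_E$-adic filtration needed for the theorem on formal functions at a normal surface singularity (one has $I_E^{nm}\subset\MO_Z(-nmE)$, but the reverse cofinality is precisely what requires an argument). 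None of this is addressed. You could plausibly repair the argument, e.g.\ by replacing $Z$ with the dlt model $Y$ and controlling $\tilde I_n/\tilde I_{n+1}$ as rank-one torsion-free sheaves on a Gorenstein curve, but as written the proof has a hole exactly at its self-declared ``principal obstacle.'' A minor secondary slip: $\deg_{k(x)}\omega_E=(K_{Z/B}+E)\cdot E$ holds only if $\operatorname{Diff}_E(0)=0$; in general $\deg\omega_E\le 0$, which still suffices but should be stated correctly.
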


\begin{proof}
By Lemma~\ref{lcgermsing}, 
we may assume that $\Delta=0$ and $x \in X$ is a unique non-regular point. 
It suffices to show that $K_{X/B}$ is $\Q$-Cartier. 
Let 
$$f:Y \xrightarrow{h} Z \xrightarrow{g} X$$ 
be birational morphisms as in Lemma~\ref{lcgermsing}(2). 
We have that 
$$K_{Y/B}+E_Y=h^*(K_{Z/B}+E) \equiv_f 0$$
where $E_Y$ is the reduced divisor with $\Supp E_Y=\Ex(f)$.

\begin{claim}
There exists a positive integer $m$ such that 
$\MO_Z(m(K_{Z/B}+E))|_E \simeq \MO_E$. 
\end{claim}

We now show Claim. 
It follows from \cite[Theorem 2.31]{Kol13} 
that $E$ is nodal and hence Gorenstein. 
Let $(E, \text{Diff}_E(0))$ be a pair defined in \cite[Definition 4.2]{Kol13}. 
Since $E$ is nodal and $(Z, E)$ is log canonical (Remark~\ref{r-ZE-lc}), 
it follows from \cite[Lemma 4.8]{Kol13} that 
$(E, \text{Diff}_E(0))$ is semi log canonical in the sense of \cite[Definition-Lemma 5.10]{Kol13}. 
Then Claim holds by $(K_{Z/B}+E) \cdot E=0$ and Lemma~\ref{l-slc}. 

\medskip

Since there is a natural morphism $E_Y \to E$, 
Claim implies that $\MO_Y(m(K_{Y/B}+E_Y))|_{E_Y} \simeq \MO_{E_Y}$ 
for sufficiently divisible $m \in \Z_{>0}$. 
The exact sequence: 
$$0 \to \MO_Y(-E_Y) \to \MO_Y \to \MO_{E_Y} \to 0,$$
induces an exact sequence:  
$$f_*\MO_Y(m(K_{Y/B}+E_Y)) \to f_*(\MO_Y(m(K_{Y/B}+E_Y))|_{E_Y})$$ 
$$\to R^1f_*\MO_Y(m(K_{Y/B}+E_Y)-E_Y)=0$$
where the last equation follows from Theorem~\ref{t-rel-klt-kvv}. 
Thus, $K_{Y/B}+E_Y$ is $f$-semi-ample, which implies that 
there is a $\Q$-Cartier $\Q$-divisor $L$ on $X$ such that 
$K_{Y/B}+E_Y=f^*L$. 
Taking the push-forward $f_*$, it follows that $K_{X/B}$ is $\Q$-Cartier. 
\end{proof}

\begin{rem}\label{r-known-nlc}
If $B$ is essentially of finite type over a field of characteristic zero 
then Theorem~\ref{t-numerical-lc} was known to follow from 
\cite[Corollary 1.6]{HX}. 
\end{rem}

\begin{thm}\label{t-lc-mmp}
Theorem~\ref{intro-MMP} holds if 
$(X, \Delta)$ satisfies the condition $({\rm LC})$. 
\end{thm}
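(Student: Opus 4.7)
The plan is to imitate the $\Q$-factorial MMP of Theorem~\ref{t-qfac-mmp}, using Theorem~\ref{t-numerical-lc} to ensure that log canonicity is preserved under each divisorial contraction. Concretely, if $K_{X/B}+\Delta$ is not $\pi$-nef, I would pick a $(K_{X/B}+\Delta)$-negative extremal ray $R$ via the cone theorem (Theorem~\ref{t-cone}) and contract it by Theorem~\ref{t-contraction}, obtaining $\varphi_0\colon X=X_0\to X_1$; when $\dim X_1<2$ the morphism is a Mori fibre space, so we stop at case~(4)(b), and otherwise $\varphi_0$ is birational and we set $\Delta_1:=(\varphi_0)_*\Delta$. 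The essential point is to show that $(X_1,\Delta_1)$ is again log canonical, after which termination follows since $\rho(X_{i+1}/S)=\rho(X_i/S)-1$ and $\rho(X/S)<\infty$ by Lemma~\ref{l-pic-number}(2).

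By Theorem~\ref{t-numerical-lc}, it is enough to verify that $(X_1,\Delta_1)$ is numerically log canonical. To this end I would fix an arbitrary projective birational $B$-morphism $h\colon W\to X_1$ and, after replacing $W$ by a sufficiently high blowup (using Lipman's desingularisation), assume that $W$ is regular and that $h$ factors as $W\xrightarrow{\psi}Y\xrightarrow{g}X\xrightarrow{\varphi_0}X_1$, where $g$ is a dlt blowup of $(X,\Delta)$ as in Theorem~\ref{t-dlt-blowup}. Write $E_Y$ for the reduced $g$-exceptional divisor, so that $K_{Y/B}+g_*^{-1}\Delta+E_Y=g^*(K_{X/B}+\Delta)$ by Remark~\ref{r-dlt-blowup}(a). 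Let $E_W$ be the unique $h$-exceptional $\R$-divisor with $K_{W/B}+h_*^{-1}\Delta_1+E_W\equiv_h 0$, and let $E'_W$ be the $\psi$-exceptional $\R$-divisor satisfying $\psi^*(K_{Y/B}+g_*^{-1}\Delta+E_Y)=K_{W/B}+\psi_*^{-1}(g_*^{-1}\Delta+E_Y)+E'_W$; by dlt-ness of $(Y,g_*^{-1}\Delta+E_Y)$, every coefficient of $E'_W$ is at most $1$.

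Writing $\Sigma_0$ for the set of components of $\Delta$ contracted by $\varphi_0$ and setting $G:=\sum_{D\in\Sigma_0}\delta_D(g\psi)_*^{-1}D$, a direct computation of strict transforms gives $\psi_*^{-1}(g_*^{-1}\Delta+E_Y)=h_*^{-1}\Delta_1+G+\psi_*^{-1}E_Y$. Combining this with the pullback formula above and the defining identity of $E_W$, one obtains
$$\Xi:=G+\psi_*^{-1}E_Y+E'_W-E_W\equiv_h (g\psi)^*(K_{X/B}+\Delta),$$
and since $-(K_{X/B}+\Delta)$ is $\varphi_0$-ample, the right-hand side is $h$-anti-nef, so $-\Xi$ is $h$-nef. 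But $\Xi$ is manifestly $h$-exceptional (every summand is contracted by $h$), so the negativity lemma (Lemma~\ref{l-negativity}) yields $\Xi\ge 0$, i.e.\ $E_W\le G+\psi_*^{-1}E_Y+E'_W$. The three summands on the right have pairwise disjoint supports---strict transforms of curves in $X$, strict transforms of $g$-exceptional primes, and $\psi$-exceptional primes respectively---and each has coefficients at most $1$, so every coefficient of $E_W$ is at most $1$. Since $h_*^{-1}\Delta_1$ similarly has coefficients at most $1$ and its support is disjoint from that of $E_W$, we conclude that $(X_1,\Delta_1)$ is numerically log canonical, hence log canonical by Theorem~\ref{t-numerical-lc}.

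The main obstacle I foresee lies precisely in this preservation step: unlike the $\Q$-factorial case, $K_{X_1/B}+\Delta_1$ is not obviously $\R$-Cartier, so one cannot naively compare log discrepancies between $X$ and $X_1$. My strategy is to detour through the dlt blowup of Theorem~\ref{t-dlt-blowup}, where discrepancies are controllable because $(Y,g_*^{-1}\Delta+E_Y)$ is $\Q$-factorial dlt, and to transport the control to $W$ via the negativity lemma; Theorem~\ref{t-numerical-lc} then upgrades numerical log canonicity to genuine log canonicity and simultaneously supplies the $\R$-Cartierness of $K_{X_1/B}+\Delta_1$ required for the next iteration of the MMP.
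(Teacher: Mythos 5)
Your proof is correct and follows the same outline the paper intends: the cone theorem (Theorem~\ref{t-cone}) and contraction theorem (Theorem~\ref{t-contraction}) supply each step, and Theorem~\ref{t-numerical-lc} is the key ingredient that upgrades numerical log canonicity of $(X_{i+1},\Delta_{i+1})$ to genuine log canonicity, which is exactly the reduction the paper's terse proof is alluding to. One small simplification worth noting: the detour through the dlt blowup $Y$ is not needed, since $(X,\Delta)$ is assumed log canonical (not merely numerically lc); you can take a log resolution $\mu\colon W\to X$ of $(X,\Delta)$, write $K_{W/B}+\Gamma=\mu^*(K_{X/B}+\Delta)$ with $\Gamma\leq 1$ directly from the LC hypothesis, observe that $\Gamma-(h_*^{-1}\Delta_1+E_W)$ is $h$-exceptional and $\equiv_h\mu^*(K_{X/B}+\Delta)$ (which is $h$-anti-nef), and conclude by the negativity lemma that $h_*^{-1}\Delta_1+E_W\leq\Gamma\leq 1$, bypassing the bookkeeping with $G$, $\psi_*^{-1}E_Y$ and $E'_W$.
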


\begin{proof}
The assertion follows directly from Theorem~\ref{t-cone}, 
Theorem~\ref{t-contraction} and Theorem~\ref{t-numerical-lc}. 
\end{proof}

\section{Miscellaneous results}

In this section, we prove inversion of adjunction (Theorem~\ref{t-IOA}) 
and the Koll\'ar--Shokurov 
connectedness theorem (Theorem \ref{t-connected}) 
for surfaces over excellent schemes. 

\subsection{Inversion of adjunction}

\begin{thm}\label{t-IOA}
Let $B$ be a scheme satisfying Assumption~\ref{a-base}. 
Let $(X, C+D)$ be 
a two-dimensional quasi-projective log pair over $B$, 
where $C$ is a reduced divisor that has no common components 
with an effective $\R$-divisor $D$. 
Let $C^N$ be the normalisation of $C$ and 
let $D_{C^N}:={\rm Diff}_{C^N}(D)$ $($cf. \cite[Definition 2.34]{Kol13}$)$. 
Then the following hold. 
\begin{enumerate}
\item 
$(X, C+D)$ is log canonical around $C$ if and only if 
$(C^N, D_{C^N})$ is log canonical. 
\item 
$(X, C+D)$ is plt around $C$ if and only if 
$(C^N, D_{C^N})$ is klt. 
\end{enumerate}
\end{thm}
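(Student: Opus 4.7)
The plan is to treat each direction of each assertion separately. Set $K_X := K_{X/B}$; both (1) and (2) force $D$ to have coefficients in $[0,1]$, which I assume. For the direct ($\Rightarrow$) direction I would take a log resolution $f \colon Y \to X$ of $(X, C+D)$ (Subsection~\ref{ss-log-resolution}) arranged so that the strict transform $\tilde C := f^{-1}_* C$ is regular, so that $f|_{\tilde C}$ identifies $\tilde C$ with $C^N$. Writing
$$K_Y + \tilde C + f^{-1}_*D + \sum_i a_i E_i = f^{*}(K_X + C+D)$$
and restricting to $\tilde C$, classical adjunction in the log smooth situation yields
$$D_{C^N} = \operatorname{Diff}_{\tilde C}\bigl(f^{-1}_*D + \sum_i a_i E_i\bigr),$$
whose coefficients at each point are direct combinations of the $a_i$ and of the coefficients of $f^{-1}_*D$. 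Hence lc of $(X, C+D)$ (i.e.\ $a_i \leq 1$) transfers to lc of $(C^N, D_{C^N})$, and plt of $(X, C+D)$ (i.e.\ $a_i < 1$ for every $f$-exceptional $E_i$) transfers to klt of $(C^N, D_{C^N})$.

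For the converse ($\Leftarrow$) direction of (1), I would apply a dlt blowup $f \colon Y \to X$ of $(X, C+D)$ from Theorem~\ref{t-dlt-blowup}: the pair $(Y, f^{-1}_*(C+D) + E)$ is dlt, the divisor $K_Y + f^{-1}_*(C+D) + E$ is $f$-nef, and
$$K_Y + f^{-1}_*(C+D) + E + E' = f^{*}(K_X+C+D)$$
for some effective $f$-exceptional $\R$-divisor $E'$ by Remark~\ref{r-dlt-blowup}(a). The strict transform $\tilde C := f^{-1}_*C$ is a coefficient-one component of a two-dimensional dlt pair, hence normal, and $\tilde C \to C^N$ is therefore an isomorphism. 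Adjunction along $\tilde C$ gives
$$D_{C^N} = \operatorname{Diff}_{\tilde C}(f^{-1}_*D + E) + E'|_{\tilde C}.$$
The key observation is then that if $(C^N, D_{C^N})$ is lc, no component $E_j \subset \Supp(E')$ can meet $\tilde C$: at such an intersection point $p$ the different already contributes coefficient one from $E_j$ while $E'|_{\tilde C}$ contributes a strictly positive amount, forcing the coefficient of $D_{C^N}$ at $p$ to exceed one. Hence $\Supp(E') \cap \tilde C = \emptyset$. From the displayed equation and Theorem~\ref{t-dlt-blowup}(2) one sees $-E'$ is $f$-nef, so Lemma~\ref{l-negativity}(2) applies: for every $x \in X$ the fibre $f^{-1}(x)$ is either contained in or disjoint from $\Supp(E')$. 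For $x \in C$ the fibre meets $\tilde C$ at a point outside $\Supp(E')$, so $f^{-1}(C) \cap \Supp(E') = \emptyset$; hence $E' = 0$ in a neighbourhood of $f^{-1}(C)$, and $(X, C+D)$ is lc around $C$. For the converse of (2) the same computation with $D_{C^N}$ having coefficients strictly less than one rules out $\tilde C$ meeting any coefficient-one component of $f^{-1}_*(C+D) + E$ apart from itself, which combined with part (1) upgrades lc to plt around $C$.

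The main obstacle will be justifying in the excellent-base setting the two ingredients used freely above: the adjunction / different formula along $\tilde C$ (in both the log smooth and dlt situations) and the normality of a coefficient-one component of a two-dimensional dlt pair. Both are classical over algebraically closed fields and should descend from the dlt structure of Theorem~\ref{t-dlt-blowup} and the adjunction framework of \cite{Kol13}, but the details need careful verification.
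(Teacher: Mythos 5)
Your proposal is correct and takes essentially the same route as the paper: the only-if direction via adjunction along a resolution (where the paper simply cites \cite[Lemma 4.8]{Kol13}), and the if direction via a dlt blowup, the negativity lemma, and the different formula along $\tilde C$. The paper's organization differs only in minor ways --- it argues (1) by contradiction and invokes \cite[Proposition 4.5(2), (6)]{Kol13} for the coefficient bounds that you derive by hand from the adjunction formula, and it absorbs the reduction to $D$ having coefficients in $[0,1]$ into the $\Delta_1,\Delta_2$ decomposition built into Theorem~\ref{t-dlt-blowup} rather than assuming it at the outset --- but the substance is the same.
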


\begin{proof}
For both of (1) and (2), 
we only show the if-part since 
the opposite implications follow from \cite[Lemma 4.8]{Kol13}.

Since the problem is local on $X$, 
we fix a closed point $x \in C \subset X$ around which we work. 
In particular, we may assume that 
all the irreducible components of $C+D$ contain $x$. 
Let $f:Y \to X$ be a dlt blowup of $(X, \Delta:=C+D)$ 
as in Theorem~\ref{t-dlt-blowup} and we use the same notation as there. 
Let $\Delta_2:=\Delta-\Delta_1$. 
We have that 
$$K_{Y/B}+f^{-1}_*\Delta_1+E=f^*(K_{X/B}+C+D)-(f^{-1}_*\Delta_2+E')$$
is $f$-nef (cf. Remark~\ref{r-dlt-blowup}). 

We show (1). 
We may assume that $f(\Ex(f))=\{x\}$. 
Assume that $(X, C+D)$ is not log canonical around $x$, 
which is equivalent to $f^{-1}_*\Delta_2+E' \neq 0$ (Remark~\ref{r-dlt-blowup}). 
Since $-(f^{-1}_*\Delta_2+E')$ is $f$-nef, 
it follows from the negativity lemma (Lemma~\ref{l-negativity}) 
that $\Supp\,E'$ contains $f^{-1}(x)$. 
This implies that $(C^N, D_{C^N})$ is not log canonical by \cite[Proposition 4.5(2)]{Kol13}, 
hence (1) holds. 

We show (2). 
Assume that $(C^N, D_{C^N})$ is klt. 
It suffices to show that $(X, C+D)$ is plt. 
It follows from (1) that $(X, C+D)$ is log canonical, 
hence  $\Delta=\Delta_1$, $\Delta_2=0$ and $E'=0$. 

We now prove that $f$ is an isomorphism. 
We assume by contradiction that $E \neq 0$. 
Since we work around $x$, we may assume that $f(\Ex(f))=\{x\}$. 
Since Theorem~\ref{t-dlt-blowup}(1) implies $\Supp E=\Ex(f)$, 
we have that $f_*^{-1}C$ intersects $E$. 
Since $E \neq 0$, we get $\llcorner D_{C^N}\lrcorner \neq 0$ by \cite[Proposition 4.5(6)]{Kol13}, which contradicts the fact that 
$(C^N, D_{C^N})$ is klt. 
Thus $f$ is an isomorphism, as desired. 

Since $f$ is an isomorphism, $(X, C+D)$ is dlt. 
If $\llcorner D\lrcorner \neq 0$, then $(C^N, D_{C^N})$ is not klt 
again by \cite[Proposition 4.5(6)]{Kol13}. 
Thus we get $\llcorner D\lrcorner=0$, hence (2) holds. 
\end{proof}

\subsection{Connectedness theorem}

\begin{thm}\label{t-connected}
Let $B$ be a scheme satisfying Assumption~\ref{a-base}. 
Let $\pi:X \to S$ be a projective $B$-morphism 
from a two-dimensional quasi-projective log pair $(X, \Delta)$ over $B$ 
to a quasi-projective $B$-scheme $S$ with $\pi_*\MO_X=\MO_S$. 
Let $\Nklt(X, \Delta)$ be the reduced closed subscheme of $X$ 
consisting of the non-klt points of $(X, \Delta)$. 
If $-(K_{X/B}+\Delta)$ is $\pi$-nef and $\pi$-big, then 
any fibre of the induced morphism $\Nklt(X, \Delta) \to S$ is 
either empty or geometrically connected. 
\end{thm}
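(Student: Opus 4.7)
The plan is to imitate the classical Koll\'ar--Shokurov argument: show $R^1\pi_*\mathcal{J}_\Delta = 0$, deduce a surjection $\mathcal{O}_S \twoheadrightarrow \pi_*\mathcal{O}_Z$ for the closed subscheme $Z := V(\mathcal{J}_\Delta)$ cut out by the multiplier ideal, and read off the geometric connectedness of the fibres from the Stein factorization.

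The starting point is to apply Theorem~\ref{t-rel-nadel} with the Cartier divisor $L := 0$. Since $-(K_{X/B}+\Delta)$ is $\pi$-nef and $\pi$-big by hypothesis, Theorem~\ref{t-rel-nadel} produces $R^1\pi_*\mathcal{J}_\Delta = 0$ whenever either $\dim \pi(X) \geq 1$, or $\dim \pi(X) = 0$ over a base of characteristic zero. I would then form the short exact sequence
$$0 \to \mathcal{J}_\Delta \to \mathcal{O}_X \to \mathcal{O}_Z \to 0$$
for $Z := V(\mathcal{J}_\Delta)$, whose underlying set is precisely $\Nklt(X, \Delta)$, and apply $\pi_*$; combined with $\pi_*\mathcal{O}_X = \mathcal{O}_S$ and the vanishing above, the associated long exact sequence produces a surjection $\mathcal{O}_S \twoheadrightarrow \pi_*\mathcal{O}_Z$.

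I would then convert this into connectedness using the Stein factorization $\pi|_Z : Z \xrightarrow{\alpha} T \xrightarrow{\beta} S$, where $T$ is the relative spectrum of $\pi_*\mathcal{O}_Z$ over $S$. The surjection makes $\beta$ a closed immersion, while $\alpha_*\mathcal{O}_Z = \mathcal{O}_T$, so the non-empty fibres of $\alpha$ are geometrically connected by the standard Stein argument. Hence the fibres of $Z \to S$ are either empty or geometrically connected. To transfer this to $\Nklt(X, \Delta) = Z_{\red}$, I use that $Z_{\red} \hookrightarrow Z$ is a closed immersion defined by the nilradical, hence a universal homeomorphism, and such morphisms (being closed immersions with nilpotent ideal) remain universal homeomorphisms under any base change; consequently the fibres of $\Nklt(X, \Delta) \to S$ are universally homeomorphic to those of $Z \to S$, even after base change to an algebraic closure of any residue field, and geometric connectedness transfers.

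The hard part will be the remaining case $\dim \pi(X) = 0$ with $S = \Spec k$ for a field $k$ of positive characteristic, which Theorem~\ref{t-rel-nadel} does not cover. The problem reduces to establishing the single global vanishing $H^1(X, \mathcal{J}_\Delta) = 0$, and my plan is to pass to a log resolution $g : Y \to X$, exploit that $(Y, \{\Delta_Y\})$ is klt and that $-g^*(K_{X/B}+\Delta)$ is nef and big on $Y$, and then appeal to Theorem~\ref{t-glob-nadel}, Theorem~\ref{t-glob-klt-kvv} or Proposition~\ref{p-F-vanishing}. These results deliver the vanishing only for asymptotic twists of the form $L + rN + \sum s_j M_j$ with $r \geq r_0$, so the main obstacle is to descend to the $r = 0$ instance: a careful Kodaira-type decomposition $-g^*(K_{X/B}+\Delta) \equiv A_Y + E_Y$ (Remark~\ref{r-Kodaira-lemma}) combined with a perturbation of $\{\Delta_Y\}$ by a small multiple of $E_Y$ that keeps the multiplier ideal unchanged should place us in a setting where Proposition~\ref{p-F-vanishing} (for strongly $F$-regular pairs with ample $L - (K+\Delta)$) applies, from which a Leray computation unwinds the desired global vanishing on $X$.
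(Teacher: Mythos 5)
Your proposal handles the relative case ($\dim \pi(X) \geq 1$) and the characteristic-zero global case in essentially the same way as the paper: apply Theorem~\ref{t-rel-nadel} to get $R^1\pi_*\mathcal{J}_\Delta = 0$, push forward the ideal-sheaf sequence, and read off geometric connectedness from the Stein factorisation. That part is fine, matching the paper's Step~2.

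However, your strategy for the hard case — $\dim \pi(X) = 0$ over a field of positive characteristic — has a genuine gap, and it is the gap that makes this the hard case. You want to prove the single vanishing $H^1(X, \mathcal{J}_\Delta) = 0$, i.e.\ the ``$r = 0$'' instance, by perturbation from the asymptotic vanishings in Theorem~\ref{t-glob-klt-kvv}, Theorem~\ref{t-glob-nadel} and Proposition~\ref{p-F-vanishing}. But all three of these results produce vanishing only after adding a sufficiently large multiple of a nef divisor $N \not\equiv 0$, and no amount of Kodaira-lemma perturbation or rescaling of boundaries makes the needed twist disappear: the threshold $r_0$ is intrinsically positive and depends on the geometry, not merely on a choice of presentation. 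Indeed, the underlying obstruction is that Kawamata--Viehweg vanishing on surfaces genuinely fails in positive characteristic (this is exactly why the paper develops only asymptotic versions and why assumption~(4) of Theorem~\ref{t-bpf} cannot be dropped, as noted via~\cite{Tanb}). So the vanishing $H^1(X, \mathcal{J}_\Delta) = 0$ is not available by the route you sketch, and without it your multiplier-ideal argument does not close.

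The paper handles this case by an entirely different route that avoids any non-asymptotic global vanishing theorem. It first passes to the separable closure of $k$ and replaces $(X,\Delta)$ by a dlt blowup (Theorem~\ref{t-dlt-blowup}), so that $\Nklt(X,\Delta) = \Supp\,\Delta^{\geq 1}$. Then it sets $A := -(K_X+\Delta)$ (which may be assumed ample after a Kodaira-lemma perturbation as in Step~\ref{step-ample}) and runs a $(K_X + \Delta^{<1} + A)$-MMP, $f: X \to Y$. Since $-(K_X + \Delta^{<1} + A) = \Delta^{\geq 1} \neq 0$ is effective, the MMP terminates in a Mori fibre space $\rho : Y \to T$. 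A step-by-step claim shows that connectedness of $\Nklt(Y, f_*\Delta)$ implies connectedness of $\Nklt(X, \Delta)$ (this reduction already uses the relative connectedness from Step~\ref{step-relative} applied to the contraction morphisms). Finally, on the Mori fibre space $Y \to T$ one argues directly: if $\dim T = 0$ then $\rho(Y)=1$ forces connectedness, and if $\dim T = 1$ then a disconnected $\Nklt$ would have one component dominating $T$ (by $\rho$-ampleness of $\Delta_Y^{\geq 1}$) and another contained in a fibre, contradicting the relative connectedness over $T$. You would need to replace your vanishing-based plan for the positive-characteristic global case with an argument of this kind.
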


\begin{proof}
We divide the proof into three steps. 

\begin{step}\label{step-ample}
There exists an effective $\R$-divisor $\Delta'$ such that 
$(X, \Delta')$ is a log pair over $B$, $\Delta' \geq \Delta$, 
$\Nklt(X, \Delta)=\Nklt(X, \Delta')$, and 
$-(K_{X/B}+\Delta')$ is $\pi$-ample. 
\end{step}

\begin{proof}[Proof of Step~\ref{step-ample}]
Since $N:=-(K_{X/B}+\Delta)$ is $\pi$-big, we can write 
$$N=A+D$$
for some $\pi$-ample $\R$-Cartier $\R$-divisor $A$ and 
an effective $\R$-divisor $D$. 
Thus for any rational number $0<\epsilon<1$, 
we have that 
$$-(K_{X/B}+\Delta)=N=(1-\epsilon)N+\epsilon A+\epsilon D,$$
where $(1-\epsilon)N+\epsilon A$ is $\pi$-ample. 
Thus it suffices to find a rational number $0<\epsilon<1$ such that 
$\Nklt(X, \Delta)=\Nklt(X, \Delta+\epsilon D)$. 
We can find such a number $\epsilon$ by taking a log resolution 
of $(X, \Delta+D)$. 
\end{proof}

\begin{step}\label{step-relative}
If $\dim S \geq 1$, then the assertion in the theorem holds. 
\end{step}

\begin{proof}[Proof of Step~\ref{step-relative}]
By Step~\ref{step-ample}, 
we may assume that $-(K_{X/B}+\Delta)$ is $\pi$-ample. 
We have an exact sequence 
$$0 \to \mathcal J_{\Delta} \to \MO_X \to \MO_{W} \to 0$$
where $W$ is the closed subscheme corresponding to the multiplier ideal $\mathcal J_{\Delta}$. 
Since the support of $W$ is equal to the non-klt locus of $(X, \Delta)$, 
we get $W_{\red}=\Nklt(X, \Delta)$. 
Since $-(K_{X/B}+\Delta)$ is $\pi$-ample, 
it follows from Theorem~\ref{t-rel-nadel} that $R^1\pi_*\mathcal J_{\Delta}=0$, 
which implies that the induced homomorphism 
$$\rho:\MO_S=\pi_*\MO_X \to \pi_*\MO_{W}$$
is surjective. 
Since $\rho$ factors through $\MO_{\pi(W)}$, 
we have that 
$$\MO_{\pi(W)} \to \MO_V=\pi_*\MO_{W}$$ 
is surjective, 
where $W \to V \to \pi(W)$ is the Stein factorisation of $W \to \pi(W)$. 
Thus $V \to \pi(W)$ is a surjective closed immersion, hence a universal homeomorphism. 
In particular, any fibre of $W \to \pi(W)$ is geometrically connected. 
Since a surjective closed immersion $\Nklt(X, \Delta) \to W$ 
is a universal homeomorphism, 
it completes the proof of Step~\ref{step-relative}. 
\end{proof}

\begin{step}\label{step-absolute}
If $\dim S=0$, then the assertion in the theorem holds. 
\end{step}

\begin{proof}[Proof of Step~\ref{step-absolute}]
Since $\dim S=0$, we have that $S=\Spec\,k$ for a field $k$. 
Taking the base change to the separable closure of $k$, 
we may assume that $k$ is separably closed. 
In particular, 
it suffices to show that $\Nklt(X, \Delta)$ is connected. 
By replacing $(X, \Delta)$ by its dlt blowup as in Theorem~\ref{t-dlt-blowup}, 
we may assume that $(X, \Delta^{<1})$ is klt and 
$\Nklt(X, \Delta)=\Supp\,\Delta^{\geq 1}$ (cf. Remark~\ref{r-dlt-blowup}). 
Set $A:=-(K_X+\Delta)$. 
By Step~\ref{step-ample}, we may assume that $A$ is ample. 
We run a $(K_X+\Delta^{<1}+A)$-MMP: $f:X \to Y$. 
Since $-(K_X+\Delta^{<1}+A)=\Delta^{\geq 1} \neq 0$, 
the program ends with a Mori fibre space $\rho:Y \to T$. 
We set $\Delta_Y:=f_*\Delta$. 
We now prove:

\begin{claim}
The following hold. 
\begin{enumerate}
\item $f(\Supp\,\Delta^{\geq 1})=\Supp\,\Delta_Y^{\geq 1}$. 
\item If $\Nklt(Y, \Delta_Y)$ is connected, then $\Nklt(X, \Delta)$ is connected. 
\item $\Nklt(Y, \Delta_Y)=\Supp\,\Delta_Y^{\geq 1}$. 
\end{enumerate}
\end{claim}

Since $f$ is the composition of the extremal contractions, 
we only prove Claim under the assumption that $E:=\Ex(f)$ is irreducible. 
In particular, $\Delta^{\geq 1}$ is $f$-ample. 

We show (1). 
The inclusion $f(\Supp\,\Delta^{\geq 1})\supset \Supp\,\Delta_Y^{\geq 1}$ 
is clear. 
It suffices to show that $f(E) \in \Supp\,\Delta_Y^{\geq 1}$. 
Since $\Delta^{\geq 1}$ is $f$-ample, 
there is an irreducible component $C$ of $\Supp\,\Delta^{\geq 1}$ 
such that $C \cdot E>0$. 
It follows from $E^2<0$ that $C\neq E$. 
Thus, we get $f(E) \in \Supp\,\Delta_Y^{\geq 1}$, 
hence (1) holds.

We show (2). 
We assume by contradiction that $\Supp\,\Delta_Y^{\geq 1}$ is connected 
but $\Supp\,\Delta^{\geq 1}$ is not. 
Thanks to (1), we can find a point $y \in \Supp\,\Delta_Y^{\geq 1}$ such that 
$f^{-1}(y) \cap \Supp\,\Delta^{\geq 1}$ is not connected. 
This contradicts Step~\ref{step-relative}. 
Thus (2) holds.

The assertion (3) follows from (1) and the fact that the MMP $f:X \to Y$ 
can be considered as a $(K_X+\Delta)$-MMP, because 
the ampleness of $A$ is preserved under the MMP. 
It completes the proof of Claim.

\medskip

If $\dim T=0$, then we have $\rho(Y)=1$, 
hence $\Nklt(Y, \Delta_Y)=\Supp\,\Delta_Y^{\geq 1}$ is clearly connected. 
Thus we may assume that $\dim T=1$. 
Assuming that $\Nklt(Y, \Delta_Y)$ is not connected, let us derive a contradiction. 
Let $D_1$ and $D_2$ be distinct connected components of $\Nklt(Y, \Delta_Y)$. 
Since $\Delta_Y^{\geq 1}$ is $\rho$-ample, 
$\Nklt(Y, \Delta_Y)=\Supp\,\Delta_Y^{\geq 1}$ dominates $T$. 
In particular, we may assume that $D_1$ dominates $T$. 
On the other hand, Step \ref{step-relative} implies that 
$\Nklt(Y, \Delta_Y) \cap \rho^{-1}(t)$ is connected for any closed point $t \in T$. 
In particular, $D_2$ does not dominate $T$. 
Since $D_2$ is connected, we have that $D_2 \subset \rho^{-1}(t_0)$ 
for some closed point $t_0 \in T$. 
However, this implies that $\Nklt(Y, \Delta_Y) \cap \rho^{-1}(t_0)$ has at least 
two connected components: $D_2$ and 
a connected component of $D_1 \cap \rho^{-1}(t_0)$. 
This is a contradiction. 
It completes the proof of  Step~\ref{step-absolute}.
\end{proof}
Thus the assertion in Theorem holds by Step~\ref{step-relative} and Step~\ref{step-absolute}. 
\end{proof}

\end{document}